\documentclass[12pt]{amsart}
\usepackage{amssymb}
\usepackage{color}
\usepackage{fullpage}
\usepackage{graphicx}  
\newtheorem{theorem}{Theorem}[section]\newtheorem{definition}[theorem]{Definition}\newtheorem{lemma}[theorem]{Lemma}
\newtheorem{proposition}[theorem]{Proposition}

\newtheorem{corollary}[theorem]{Corollary}  

\begin{document}
\title{Leinert Sets and Complemented Ideals in Fourier Algebras}
\author{Michael Brannan, Brian Forrest and Cameron Zwarich}
\address{Michael Brannan: Department of Mathematics, Texas A\&M University, Mailstop 3368, College Station, TX 77843-3368 USA}
\email{mbrannan@math.tamu.edu}
\address{Brian Forrest: Department of Pure Mathematics, University of Waterloo, Waterloo, Ontario, N2L 3G1 Canada}
\email{beforrest@uwaterloo.ca}
\address{Cameron Zwarich: Cupertino, California,  95014 USA}
\email{cwzwarich@me.com}
\date{\today}
\maketitle

\begin{abstract}
Let $G$ be a locally compact group. We show how complemented ideals in the Fourier algebra $A(G)$ of $G$ arise naturally 
from a class of thin sets known as Leinert sets. Moreover, we also present an explicit example of a closed ideal in $A(\mathbb{F}_{N})$,
the free group on $N \ge 2$ generators,  that is complemented in $A(\mathbb{F}_{N})$ but it is not 
completely complemented. Then by establishing an appropriate extension result for restriction algebras 
arising from Leinert sets, we show that any almost connected group $G$ for which every complemented ideal in $A(G)$ is also completely complemented must be amenable. 
\end{abstract}

\section{Introduction} 
The problem of identifying complemented ideals in the Fourier algebra $A(G)$ of a locally compact group began naturally with an attempt to characterize 
such ideals in the group algebra $L^1(G)$ of a locally compact abelian group. (In this case, $A(G)$ can be identified with $L^1(\hat{G})$ via the classical Fourier transform.) The first significant result in this direction can be attributed to D.J. Newman  
\cite{Newman}. He showed that if $\mathbb T$ is the circle group and if 
\[H^1=\{f \in L^1(\mathbb T)~|~ \hat{f}(n)=0 \textnormal{~for every~} n<0\},\]
then $H^1$ is not complemented in $L^1(\mathbb T)$. A year later, W. Rudin gave a complete characterization of all of the complemented
 ideals in $L^1(\mathbb T)$ by showing that an ideal $I$ is complemented in $L^1(\mathbb T)$ if and only if 
\[I=\{f \in L^1(\mathbb T)~|~ \hat{f}(n)=0 \textnormal{~for every~} n\in E\},\]
where $E\subset \mathbb{Z}$ is of the form $E=\bigcup_{i=1}^n a_i \mathbb{Z} + b_i$. 

Given a locally compact group $G$ we let $G_d$ denote the same group together with the discrete topology. We will denote the ring of subsets of $G_d$ generated by left cosets of subgroups of $G$ by $\mathcal{R}(G_d)$. The closed coset ring of $G$ is the collection 
\[ \mathcal{R}_c(G)=\{E \subseteq G~|~ E\in \mathcal{R}(G_d) \textnormal{~and~} E\textnormal{~is closed in ~}G\}.\]

In his beautiful 1966 memoir \cite{Rosenthal}, H.P. Rosenthal showed that for a general locally compact abelian group $G$ a 
necessary condition for a closed ideal $I$ to be complemented in $L^1(G)$ is that $I=I_G(E)$ where 
\[I_G(E)= \{ f \in L^1(G) ~|~ \hat{f}(x)=0 \textnormal{~for every~} x\in E\}\]
with $E\in \mathcal{R}_c(\hat{G})$.   In fact, Rudin's result above established the converse for the specific case where 
$G=\mathbb T$. However the converse does not hold in general. In particular, Alspach and Matheson \cite{Al-Math} showed that for
 $G=\mathbb{R}$, I is complemented in $L^1(\mathbb{R})$ if and only if $I=I_G(E)$ where
 $E=\bigcup_{i=1}^n (\tau_i \mathbb{Z} + \beta_i)\setminus F$ where the $t_i$'s are pairwise rationally dependent and $F$ is finite. From there Alspach, Matheson and Rosenblatt made a systematic study of complemented ideals in the group algebra of a generic locally compact abelian group 
\cite{Al-Math-Rosen}, \cite{Al-Math-Rosen2}.  They gave a necessary and sufficient condition for an ideal with a discrete hull to be complemented. They also developed a complicated inductive procedure which Alspach later used to completely characterize the complemented ideals in $L^1(\mathbb{R}^2)$
\cite{Alsp}. Their analysis also showed that a complete characterization of the complemented ideals in $L^1(\mathbb{R}^3)$ would be a monumental task.  

For nonabelian groups the problem of identifying complemented ideals in the Fourier algebra is  still very much in it's infancy.  If we recognize the additional structure that $A(G)$ carries as an operator space, and we ask that our projection be completely bounded, then it is known from \cite{F-K-L-S} that if $G$ is amenable, then the  ideal must be of the form $I=I_G(E)$ where  $E\in \mathcal{R}_c({G})$. However as we will also make clear later in the paper, the necessity that $E\in \mathcal{R}_c({G})$ for $I=I_G(E)$ to be completely complemented does not hold for $\mathbb{F}_N$, the free group on $N\ge 2$ generators. Moreover, we will show that for certain non-amenable almost connected groups $G$, there exists a set $E \not \in \mathcal{R}_c({G})$ such that $I_G(E)$  is completely complemented in $A(G)$ (see Proposition \ref{prop-cb-proj}). 

Unfortunately, even for an amenable group $G$, without the additional assumption that our projection be completely bounded, it is not known whether or not the complementation of  $I_G(E)$ in $A(G)$ implies that $E\in \mathcal{R}_c({G})$. Clearly, if we could show that all complemented ideals were completely complemented, then this implication would follow. However, in the non-amenable case we will show that $A(\mathbb F_N)$ has closed ideals that are complemented but not completely complemented (see Corollary \ref{corollary_prelim_1}). Whether or not this can happen in an amenable group is still not known.

We have already seen that even for $G = \mathbb{R}^3$ it is a very difficult task to identify which elements of $E\in \mathcal{R}_c(\hat{G})$ generate 
complemented ideals. That said, we do note that it is relatively easy to show that if $G$ is abelian and if $H$ is a 
closed subgroup of $G$, then $I_G(H)$ is complemented. In fact, it is completely
 complemented as in this case $A(G)$ is a \textit{MAX}-operator space. However, in \cite{For} the 
second author showed that if $G$ is the $ax+b$ - group, then $G$ has a closed subgroup $H$ for which $I_G(H)$ is not complemented in $A(G)$.  All told, we are certainly left to conclude that a complete classification of complemented ideals in the Fourier algebra of an arbitrary locally compact group $G$ would not be a reasonable goal. Instead, in this paper we will focus our attention on complemented and completely complemented ideals in the Fourier algebra of groups which contain the free group $\mathbb{F}_N$ as a closed subgroup. 

 Let $G$ be a discrete group. We will call a set $E\subseteq G$ a \emph{Leinert set} if the restriction algebra $A_G(E)$ is isomorphic to $\ell^2(E)$ as Banach algebras. We call $E$ a \emph{strong Leinert set} if $\ell^{\infty}(E) \subseteq M_{cb}A(G)$, the algebra of completely bounded multipliers of $A(G)$.

If  $E$ is any subset of a discrete group $G$, then it is well known that $\ell^2(E) \hookrightarrow A(G)$ contractively as spaces of functions on $G$.  Furthermore, by equipping $\ell^2(E)$ with various natural (i.e. row or column) Hilbertian operator space structures, this inclusion becomes a complete contraction.   For a locally compact group $G$ we call a subset $E$ \emph{uniformly discrete} if there is a neighbourhood $\mathcal{U}$ of the identity $e$ in $G$ such that if $g_1,g_2\in E$ with $g_1 \not = g_2$, then $g_1\mathcal{U} \cap g_2 \mathcal{U}=\emptyset$. If $E$ is a uniformly discrete subset of a locally compact group $G$, we show that there is a natural generalization of the embedding $\ell^2(E) \hookrightarrow A(G)$.  As an application of the ideas surrounding these objects, we obtain the following results on the complementation problem for ideals in  $A(G)$:  

Let $G$ be a locally compact group, $H$ a closed discrete subgroup of $G$,
and let $E$ be a Leinert set in $H$.  Then
\begin{itemize} 
\item[i)]The ideal $I_G(E)$ of functions in $A(G)$ vanishing on $E$
is always a complemented Banach subspace of $A(G)$. (See theorem \ref{main_result}.) 
\item[ii)]$I_G(E)$ is always an {\it invariantly weakly complemented} ideal
in $A(G)$. That is, $I_G(E)^\perp \subseteq VN(G)$ is complemented by a projection which is also an $A(G)$-bimodule map.  (See Proposition \ref{main_result_invariant})
\item[iii)] If $H$ is a noncommutative free group, then there are Leinert sets $E \subseteq H$ for which the ideal $I_G(E)$ is (weakly) complemented in $A(G)$, but not (weakly) completely complemented in $A(G)$.  (See Corollary \ref{failure_complete_complementation}.)
\item[iv)]  If $E$ is a strong Leinert set in $H$, then under certain conditions (in particular, when $G \in [SIN]_H$), $I_G(E)$ is complemented in $A(G)$ by a completely bounded projection.  (See Proposition \ref{prop-cb-proj}.) 
 \end{itemize}

\section{Preliminaries}

In this section we will outline some of the basic facts, notations and basic definitions we will need throughout the paper. We begin with some well known results on operator spaces  which we will need. 

\subsection{Operator Spaces}

If $\mathcal H $ is a Hilbert space, then we will denote by
$\mathcal H_r$ and $\mathcal H_c$ the row and column operator
Hilbert spaces over $\mathcal H$, respectively.  Refer to
\cite{Effros_Ruan} for the definitions and properties of these
operator spaces. If $X$ is any linear space, then we denote the
complex conjugate linear space of $X$ by $\overline{X}$.  The
elements of $\overline{X}$ will be denoted by $\overline{x}$ where
$x$ denotes some element of $X$.

Recall that if $X$ is an operator space, then $\overline{X}$ is
naturally an operator space by defining, for each $n \in \mathbb N$,
\begin{eqnarray*}
\|[\overline{x_{ij}}]\|_{M_n(\overline{X})} =\|[x_{ij}]\|_{M_n(X)}\qquad 
([\overline{x_{ij}}] \in M_n(\overline{X})).
\end{eqnarray*}

If $\mathcal H$ is any Hilbert space, then by \cite{Effros_Ruan} we
have the following completely isometric identifications:
$$(\mathcal H_c)^* = \overline{\mathcal H_r}, \ \ \ \ (\mathcal H_r)^* = \overline{\mathcal H_c}.$$
The dualities being given in both cases by the following dual
pairing
\begin{eqnarray*}
\langle \xi, \overline{\eta} \rangle = \langle \xi | \eta
\rangle_\mathcal H,
\end{eqnarray*}
where we take inner products $\langle \cdot | \cdot \rangle_H$ to be conjugate-linear in the second variable.

Recall that if $X$ and $Y$ are two operator spaces, then the
operator space projective tensor product of $X$ and $Y$ is denoted
by $X \widehat{\otimes} Y$ and defined by the family of norms
$\{\|\cdot\|_{n,\wedge}:M_n(X \otimes Y) \to \mathbb R_+\}_{n\in
\mathbb N}$ where
\[
\|u\|_{n,\wedge} = \inf\{\|\alpha\|\|x\|_p\|y\|_q\|\beta\|: \ u =
\alpha (x \otimes y)\beta \in M_n(X \otimes Y)\},
\]
where  $x \in M_p(X), \ y \in M_q(Y), \ \alpha \in M_{n,pq},$
and $\beta \in M_{pq, n}$.

\subsection{The Fourier Algebra and its Ideals}

Throughout this paper $G$ will denote a locally compact group with a fixed left Haar measure $dx$. We equip $L^1(G)$
with convolution and involution given by $f^*(x)=\Delta(x^{-1})\overline{f(x^{-1})}$, making it into an involutive Banach algebra which we call the \emph{group algebra of $G$}. The \emph{group $C^*$-algebra of $G$}, which we denote by $C^*(G)$ is simply the enveloping 
$C^*$-algebra of $L^1(G)$. 

By a representation of $G$ we will mean a homomorphism $\pi:G\to\mathcal{U}(\mathcal{H_{\pi}})$, where 
$\mathcal{U}(\mathcal{H_{\pi}})$ denotes the group of unitary operators on the Hilbert space $\mathcal{H_{\pi}}$. Given such a $\pi$, and
$\xi, \eta \in \mathcal{H}$ we call the function 
\[\pi_{\xi,\eta}:G \to \mathbb C; \qquad \pi_{\xi,\eta}(x)=\langle \pi(x) \xi | \eta \rangle_{\mathcal{H}_{\pi}}\]
a \emph{coefficient function} of $\pi$. We say that $\pi$ is continuous if each of its coefficient functions are continuous. 

We let 
\[B(G)= \{ u=\pi_{\xi,\eta} ~|~ \pi \textnormal{~is a continuous representation of $G$ and~} \xi,\eta \in \mathcal{H_{\pi}}\}.\]
Then $B(G)$ can be identified as the dual of $C^*(G)$ via the dual pairing 
\[\langle u, f \rangle = \int_G u(x) f(x) \,dx \]
for every $f\in L^1(G)$. With respect to the dual norm and the usual pointwise operations $B(G)$ becomes a commutative Banach algebra
which we call the \emph{Fourier-Sieltjes algebra} of $G$.  The set of all elements $u = \pi_{\xi,\xi} \in B(G)$ is denoted by $P(G)$.  $P(G)$ is precisely  the cone of continuous positive definite functions on $G$ and is identified with the space of all positive linear functionals on $C^*(G)$ under the identification $B(G) = C^*(G)^*$.

Amongst all continuous unitary representations of $G$, the most important would be  the \emph{left regular representation} $\lambda: G \to \mathcal{U}(L^2(G))$ which is defined by 
\[\lambda(x)(f)(y)=f(x^{-1}y)\]
for each $f\in L^2(G)$. We then define 
\[A(G)=\{u(x)=\lambda_{f,g}(x)~|~f,g\in L^2(G)\}.\]
$A(G)$ is a closed ideal of $B(G)$ which we call the \emph{Fourier algebra} of $G$. Its dual space is the \emph{group von Neumann algebra} $VN(G)$ given by 
\[VN(G) =\overline{span\{\lambda(x)~|~x\in G\}}^{WOT} \subseteq \mathcal{B}(L^2(G)).\]

We note that as the predual of a von Neumann algebra the Fourier algebra inherits a natural operator space structure under
 which it becomes a completely contractive Banach algebra.

We refer the reader to \cite{Eymard} for the basic properties of $A(G)$ and $B(G)$. 

\subsection{Complemented Ideals in Fourier Algebras}

 The Gelfand spectrum $\Delta(A(G))$ of $A(G)$ can be identified with $G$. If $E\subseteq G$ is closed, then we associate with $E$ the closed 
ideal of $A(G)$ given by 
\[I_G(E)=\{u \in A(G) ~|~ u(x)=0 \textnormal{~for every~} x\in E\}.\]

When there is no possibility of confusion, we will omit the
subscript $G$ and write $I(E)$ instead of $I_G(E)$.

\begin{definition} \label{complemented_ideal}
We say that a closed ideal $I$ is complemented in $A(G)$ if there exists a bounded linear map $P$ from $A(G)$ onto $I$ such that $P^2 = P$. We say that 
$I$ is invariantly complemented by $P$ if 
\[P(uv) =u\cdot P(v)\]
for every $u,v \in A(G)$.

We say that  a closed ideal $I$ is weakly complemented in $A(G)$ if there exists a bounded linear map $P$ from $VN(G)$ onto $I^{\perp}$ such that $P^2 = P$. We say that 
$I$ is invariantly weakly complemented by $P$ if 
\[P(u\cdot T) =u\cdot P(T)\]
for every $u \in A(G)$, $T \in VN(G)$, where we define  
\[\langle w, u\cdot T \rangle =\langle uw, T \rangle\]
for every $u,w \in A(G)$ and $T \in VN(G)$. 

We say that an ideal is (invariantly) completely complemented or (invariantly) completely weakly complemented if the implementing projection $P$ can be chosen to be completely bounded. 

\end{definition}

\begin{definition}
Let $G$ be a locally compact group and let $E \subseteq G$ be a
closed subset.  Denote by $A_G(E)$ the \textbf{restriction algebra}
\[
A_G(E) = \{\varphi:E \to \mathbb C: \ \varphi = u\big|_E \textrm{
for some $u \in A(G)$}\}.
\]
\end{definition}
When there is no possibility of confusion, we will simply write
$A(E)$ instead of $A_G(E)$. Observe that $A_G(E)$ identifies canonically with the quotient
algebra $A(G)/I_G(E)$. We equip $A_G(E)$ with the quotient norm
\[
\|\varphi\|_{A_G(E)} = \inf\{\|u\|_{A(G)}: u \in A(G) \ \textrm{ and
} \ u\big|_E = \varphi\}.
\]

\subsection{Mutipliers and Completely Bounded Multipliers of $A(G)$} 
 
A \emph{multiplier} of $A(G)$ 
is a (necessarily bounded and continuous) function 
$v \!: G \to \mathbb{C}$
such that $v A(G) \subseteq A(G)$. For each such  $v$ of $A(G)$, 
the linear operator $M_v$ on $A(G)$ defined by 
$M_{v}(u)=vu$ for each $u\in A(G)$ is bounded via the Closed 
Graph Theorem. The {\it multiplier algebra\/} 
of $A(G)$ is the closed subalgebra

\[
  MA(G) := \{ M_v : \textnormal{$v$ is a multiplier of $A(G)$} \}
\]
of $\mathcal{B} (A(G))$, where  $\mathcal{B} (A(G))$ denotes the algebra of all bounded linear 
operators from $A(G)$ to $A(G)$. Throughout this paper we will generally use $v$ in place of the 
operator $M_{v}$ and we will
write $\| v\|_{MA(G)}$ to represent the norm of $M_{v}$ in  $\mathcal{B} (A(G))$. \\

\noindent \textbf{Remark:}
Let $\mathbb{F}_N$ be the free group on $N$ generators $\{x_i\}_{i=1}^N$.  For  $g\in \mathbb{F}_N$ we  denote the usual word length of $g$ by 
 $|g|$ with respect to our $N$ generators. For the unit $e\in \mathbb{F}_N$, we have the convention that $|e| = 0$.

The length function and its connection to harmonic analysis on $\mathbb{F}_N$  has been extensively
studied by Haagerup in \cite{Haagerup}. In particular, the following result (\cite{Haagerup}  Lemma 1.7) will be
useful to us in the next section.

\begin{theorem}\label{Haagerup}
 Let $N\geq 2$ be a positive integer and let $\phi : \mathbb{F}_N \to \mathbb{C}$ be a function for
which
\[\sup\limits_{g\in \mathbb{F}_{N}} |\phi(g)|(1 + |g|)^2<\infty.\]
Then $\phi \in MA(\mathbb{F}_N)$  and
\[\|\phi\|_{MA(\mathbb{F}_N)}\leq 2 \sup\limits_{g\in \mathbb{F}_{N}} |\phi(g)|(1 + |g|)^2.\]
\end{theorem}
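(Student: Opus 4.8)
The plan is to establish the stronger assertion that $\phi$ is in fact a \emph{completely bounded} multiplier with $\|\phi\|_{M_{cb}A(\mathbb{F}_N)}\le 2\,C$, where $C=\sup_{g}|\phi(g)|(1+|g|)^2$; since $\|\phi\|_{MA(\mathbb{F}_N)}\le\|\phi\|_{M_{cb}A(\mathbb{F}_N)}$ this yields the theorem. The tool I would use is the standard factorization characterization of completely bounded multipliers (Bozejko--Fendler, Gilbert): if there is a Hilbert space $\mathcal{H}$ and bounded maps $\xi,\eta:\mathbb{F}_N\to\mathcal{H}$ with $\phi(g^{-1}h)=\langle\xi(h)\mid\eta(g)\rangle$ for all $g,h$, then $\phi\in M_{cb}A(\mathbb{F}_N)$ and $\|\phi\|_{M_{cb}A(\mathbb{F}_N)}\le\big(\sup_g\|\eta(g)\|\big)\big(\sup_h\|\xi(h)\|\big)$. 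Thus the entire problem reduces to factoring the kernel $k(g,h)=\phi(g^{-1}h)$ through bounded vector fields on the group, with the product of sup-norms bounded by $2C$.

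First I would record the tree geometry of $\mathbb{F}_N$. Writing the reduced word of $g$ as $a_1\cdots a_m$ with $m=|g|$, let $g_k=a_1\cdots a_k$ denote its length-$k$ prefix, so that $g_0=e,g_1,\dots,g_m=g$ are exactly the vertices of the geodesic $[e,g]$ in the Cayley tree. The two identities I would lean on are $|g^{-1}h|=|g|+|h|-2\,c(g,h)$, where $c(g,h)$ is the length of the longest common prefix, and, setting $w=g_{c}=h_{c}$ with $c=c(g,h)$ for the confluence vertex and writing $g=wp$, $h=wq$ for the divergent suffixes, $g^{-1}h=p^{-1}q$ with $|p|+|q|=|g^{-1}h|$.

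The construction I would attempt builds $\eta(g)$ and $\xi(h)$ out of the prefixes of $g$ and of $h$: to each prefix one attaches a basis vector recording the associated downward suffix word, weighted by a coefficient extracted from $\phi$ and from the word length. Summing over the split position $|g^{-1}h|=|p|+|q|$ of the geodesic and performing a summation-by-parts in the length variable, the cross terms $\langle\xi(h)\mid\eta(g)\rangle$ should telescope to the single value $\phi(p^{-1}q)=\phi(g^{-1}h)$, in direct analogy with the second-difference (Hankel) computation that handles the radial case. The polynomial hypothesis then enters purely at the estimation stage: using $|\phi(g)|\le C(1+|g|)^{-2}$ the weights are square-summable along each geodesic, so that $\sup_g\|\eta(g)\|$ and $\sup_h\|\xi(h)\|$ are each finite and their product is dominated by a convergent polynomial series controlled by $2C$; the factor $2$ reflects the two symmetric halves of each geodesic, namely the upward segment from $g$ to the confluence and the downward segment to $h$.

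The main obstacle is the decoupling step. Because $\phi(g^{-1}h)=\phi(p^{-1}q)$ depends \emph{jointly} on the two divergent suffixes, of which $p$ is visible only to $g$ and $q$ only to $h$, one cannot naively factor the kernel by a vector that knows a single side; a direct attempt collapses to functions of $c(g,h)$ alone and misses all non-radial information. The device intended to overcome this is exactly the split-position decomposition combined with the length telescoping, which converts the joint dependence into a sum of rank-one contributions indexed by the confluence vertex. The price of this expansion must then be reabsorbed through the $\ell^2$ (rather than $\ell^1$) nature of the Hilbert-space norm, and showing that the resulting sum is bounded uniformly in $g$ and $h$, while extracting the sharp constant $2$ and the exact weight $(1+|g|)^2$, is the delicate part of the argument; the remainder is bookkeeping on the tree.
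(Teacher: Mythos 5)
Your plan is to deduce the theorem from a strictly stronger claim: that the decay hypothesis forces $\phi\in M_{cb}A(\mathbb{F}_N)$ with $\|\phi\|_{M_{cb}A(\mathbb{F}_N)}\le 2C$, obtained from a Bozejko--Fendler factorization $\phi(g^{-1}h)=\langle\xi(h)\mid\eta(g)\rangle$ with $\sup_h\|\xi(h)\|\cdot\sup_g\|\eta(g)\|\le 2C$. That stronger claim is false, and this paper is largely built around its failure. Notice that nothing in your sketch uses finiteness of the generating set: geodesics, prefixes, confluence points, and square-summability of $(1+k)^{-2}$-weights along geodesics read identically in the Cayley tree of $\mathbb{F}_\infty$, and your asserted constant $2C$ is independent of $N$ (the paper observes in Corollary \ref{Haagerup2} that precisely this generator-independence transfers Theorem \ref{Haagerup} to $\mathbb{F}_\infty$). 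But Proposition \ref{Leinert} exhibits $\phi=1_E$ with $E=\{x_ix_j^{-1}:1\le i\le j<\infty\}\subset\mathbb{F}_\infty$, a function supported on words of length at most $2$, so that $\sup_g|\phi(g)|(1+|g|)^2=9$, which is \emph{not} in $M_{cb}A(\mathbb{F}_\infty)$: by Theorem \ref{Schur}, a factorization of the kind you want to build is equivalent to boundedness of the Schur multiplier $\sigma_\phi(g,h)=\phi(gh^{-1})$, and for this $\phi$ the compression of $S_{\sigma_\phi}$ to $\mathcal{B}(\ell^2(S))$ is upper-triangular truncation, which is unbounded. Nor can you escape by insisting on finite $N$: taking $E_n=\{x_ix_j^{-1}:1\le i\le j\le n\}\subset\mathbb{F}_n$, your claimed theorem would give $\|1_{E_n}\|_{M_{cb}A(\mathbb{F}_n)}\le 18$ uniformly in $n$, hence a uniform bound on the Schur norm of upper-triangular truncation of $n\times n$ matrices; since these truncations converge (in the appropriate pointwise sense) to triangular truncation on $\mathcal{B}(\ell^2)$, this contradicts the unboundedness cited in the proof of Proposition \ref{Leinert} (\cite{Paulsen}, Problems 8.15 and 8.16; the finite norms in fact grow like $\log n$). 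So the ``decoupling step'' you flag as the main obstacle is not a technical difficulty awaiting the right telescoping device: it is the exact point at which the approach is provably impossible, because for such $\phi$ no bounded vector fields $\xi,\eta$ exist at all.

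The conclusion is that any correct proof must work with the $MA(\mathbb{F}_N)$ norm itself and not route through the cb norm. This is what Haagerup's original argument does -- note that the present paper does not prove Theorem \ref{Haagerup} at all, but quotes it as Lemma 1.7 of \cite{Haagerup}. Haagerup decomposes $\phi$ along the spheres $W_k$ of words of length $k$ and estimates $\|\phi u\|_{A(\mathbb{F}_N)}$ for one $u\in A(\mathbb{F}_N)$ at a time, the engine being the Haagerup inequality $\|\lambda(f)\|\le(k+1)\|f\|_2$ for $f$ supported on $W_k$, combined with the duality $A(\mathbb{F}_N)^*=VN(\mathbb{F}_N)$. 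The multiplier norm is a supremum over single functions $u$, whereas the cb norm demands bounds uniform over matrices of functions; the content of Section 3 of the paper is exactly that the length-decay hypothesis controls the former but not the latter.
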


We would like to extend Theorem \ref{Haagerup}  to noncommutative free groups with infinitely
many generators. This can easily be done since the inequalities in Theorem \ref{Haagerup}  have no
dependence on the number $N$ of generators of $\mathbb{F}_N$. In particular, we have 

\begin{corollary}\label{Haagerup2} 
 Let $\mathbb{F}_I$ be a noncomutative free group on infinitely many generators
$\{x_i\}_{i\in I}$ and let $\phi : \mathbb{F}_I \to \mathbb{C}$ be a function for
which
\[\sup\limits_{g\in \mathbb{F}_{I}} |\phi(g)|(1 + |g|)^2<\infty.\]

Then $\phi \in MA(\mathbb{F}_I)$  and
\[\|\phi\|_{MA(\mathbb{F}_I)}\leq 2 \sup\limits_{g\in \mathbb{F}_{I}} |\phi(g)|(1 + |g|)^2.\]
\end{corollary}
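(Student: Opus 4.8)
The plan is to reduce the infinitely generated case to the finitely generated case handled by Theorem \ref{Haagerup}, exploiting the fact that $\mathbb{F}_I$ is the directed union of the subgroups $\mathbb{F}_J$ generated by the finite subsets $\{x_i\}_{i\in J}$, $J \subseteq I$ finite. For such a $J$, the subgroup $\mathbb{F}_J$ is freely generated by $\{x_i\}_{i\in J}$, so it is isomorphic to $\mathbb{F}_{|J|}$ and, crucially, the word length of any $g \in \mathbb{F}_J$ computed inside $\mathbb{F}_J$ agrees with its word length $|g|$ in $\mathbb{F}_I$. Writing $C := \sup_{g \in \mathbb{F}_I}|\phi(g)|(1+|g|)^2$, the restriction $\phi|_{\mathbb{F}_J}$ satisfies $\sup_{g\in\mathbb{F}_J}|\phi|_{\mathbb{F}_J}(g)|(1+|g|)^2 \le C$, so Theorem \ref{Haagerup} gives $\phi|_{\mathbb{F}_J} \in MA(\mathbb{F}_J)$ with $\|\phi|_{\mathbb{F}_J}\|_{MA(\mathbb{F}_J)} \le 2C$, the bound being independent of $J$.

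The next step is to transfer these local multiplier bounds to $\mathbb{F}_I$ using the isometric embedding of Fourier algebras associated to open subgroups. Since $\mathbb{F}_I$ is discrete, each $\mathbb{F}_J$ is an open subgroup, so extension by zero gives an isometric embedding $A(\mathbb{F}_J) \hookrightarrow A(\mathbb{F}_I)$; equivalently, any $u \in A(\mathbb{F}_I)$ supported in $\mathbb{F}_J$ satisfies $\|u\|_{A(\mathbb{F}_I)} = \|u|_{\mathbb{F}_J}\|_{A(\mathbb{F}_J)}$. Now let $u \in A(\mathbb{F}_I)$ be finitely supported; then $\mathrm{supp}(u)$ involves only finitely many generators, so $u$ is supported in some $\mathbb{F}_J$ with $J$ finite. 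The product $\phi u$ is again supported in $\mathbb{F}_J$ and restricts there to $(\phi|_{\mathbb{F}_J})(u|_{\mathbb{F}_J})$, whence
\[ \|\phi u\|_{A(\mathbb{F}_I)} = \|(\phi|_{\mathbb{F}_J})(u|_{\mathbb{F}_J})\|_{A(\mathbb{F}_J)} \le 2C\, \|u|_{\mathbb{F}_J}\|_{A(\mathbb{F}_J)} = 2C\,\|u\|_{A(\mathbb{F}_I)}. \]
Thus $u \mapsto \phi u$ is bounded by $2C$ on the finitely supported elements of $A(\mathbb{F}_I)$.

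Finally, I would pass to all of $A(\mathbb{F}_I)$ by density. The finitely supported functions are dense in $A(\mathbb{F}_I)$ (they constitute $A(\mathbb{F}_I) \cap C_c(\mathbb{F}_I)$, which is dense for every locally compact group). Given $u \in A(\mathbb{F}_I)$, choose finitely supported $u_n \to u$ in $A(\mathbb{F}_I)$; the estimate above shows that $(\phi u_n)$ is Cauchy, hence convergent to some $w \in A(\mathbb{F}_I)$, while the contractive inclusion $A(\mathbb{F}_I) \hookrightarrow C_0(\mathbb{F}_I)$ together with the bound $\|\phi\|_\infty \le C$ forces $\phi u_n \to \phi u$ pointwise; therefore $w = \phi u \in A(\mathbb{F}_I)$ and $\|\phi u\|_{A(\mathbb{F}_I)} \le 2C\|u\|_{A(\mathbb{F}_I)}$, giving $\phi \in MA(\mathbb{F}_I)$ with the asserted norm bound. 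The content of the argument lies entirely in its two structural inputs rather than any new estimate: the $N$-independence of Haagerup's constant and the isometric behaviour of $A(\cdot)$ under extension by zero from open subgroups. Accordingly, the step I expect to require the most care is this isometric embedding, together with the compatibility of word length and support under the inclusion $\mathbb{F}_J \subseteq \mathbb{F}_I$, since it is precisely what allows the uniform local bounds to assemble into a single global bound with no loss in the constant.
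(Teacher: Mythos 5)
Your proposal is correct and follows essentially the same route as the paper: reduce to finitely supported $u$, observe that such $u$ (and hence $\phi u$) lives in a finitely generated free subgroup $\mathbb{F}_J$ where word lengths agree, apply Theorem \ref{Haagerup} there with its $N$-independent constant, and transfer the bound through the isometric embedding $A(\mathbb{F}_J) \hookrightarrow A(\mathbb{F}_I)$. The only difference is that you spell out the density/limit argument (Cauchy in $A(\mathbb{F}_I)$ plus pointwise identification of the limit with $\phi u$) that the paper compresses into the phrase ``it suffices to show,'' which is a harmless elaboration rather than a different method.
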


\begin{proof}   Let $C_c(\mathbb{F}_I) )$  denote the algebra of finitely supported complex valued functions on $\mathbb{F}_I$.
Since $A(\mathbb{F}_I) ) = \overline{C_c(\mathbb{F}_I )}^{\|\cdot\|_{A(\mathbb{F}_I)}}$
and $\phi C_c(\mathbb{F}_I ) \subseteq C_c(\mathbb{F}_I)$, it suffices to show that
\[\|\phi u\|_{A(\mathbb{F}_I)}\leq 2 \sup\limits_{g\in \mathbb{F}_{I}} |\phi(g)|(1 + |g|)^2\| u\|_{A(\mathbb{F}_I)}\]
for every $u \in C_c(\mathbb{F}_I)$.

To see this, let $u \in C_c(\mathbb{F}_I)$ be arbitrary. Since the support of $u$ is finite, we can
find a finite subset $J\subseteq I$ such that $u$  is supported on the finitely generated subgroup
$\mathbb{F}_J =\langle x_i : i \in  J\rangle \leq \mathbb{F}_I$ . That is, $ u \in C_c(\mathbb{F}_J)$. Since the natural inclusion
$C_c(\mathbb{F}_J) \hookrightarrow C_c(\mathbb{F}_I)$ (given by extending by zero outside $\mathbb F_J$) 
extends to a complete isometry $ A(\mathbb{F}_J) \hookrightarrow A(\mathbb{F}_I )$, we get from Theorem \ref{Haagerup}:
\begin{eqnarray*}
\|\phi u\|_{A(\mathbb{F}_I)}&=& \|\phi u\|_{A(\mathbb{F}_J)}\\
&\leq& 2 \sup\limits_{g\in \mathbb{F}_{I}} |\phi(g)|(1 + |g|)^2\| u\|_{A(\mathbb{F}_J)} \\
&\leq& 2 \sup\limits_{g\in \mathbb{F}_{I}} |\phi(g)|(1 + |g|)^2\| u\|_{A(\mathbb{F}_I)} 
\end{eqnarray*}

\end{proof}

Since $A(G)$ is the predual of the von Neumann algebra $VN(G)$ it carries a natural operator structure
which makes $A(G)$ into a completely contractive Banach algebra. With this 
operator space structure we can define the 
$cb$\emph{-multiplier algebra} of $A(G)$ to be
\[
   {M}_{cb}A(G) := CB(A(G)) \cap {M}A(G),
\]
where $CB(A(G))$ denotes the completely contractive Banach algebra of all completely bounded 
linear maps from $A(G)$ into itself. 
We let $\| v\| _{M_{cb}A(G)}$ denote the $cb$-norm of 
the operator $M_{v}$. 
It is well known that ${M}_{cb}A(G)$ is a closed subalgebra 
of $CB(A(G))$ and 
is thus a (completely contractive) Banach algebra
with respect to the norm $\| \cdot \| _{M_{cb}A(G)}$.

It is known that in general, 
\[A(G)\subseteq B(G) \subseteq {M}_{cb}A(G) \subseteq {M}A(G)\] 
and that for $v \in B(G)$
\[\| v\| _{B(G)}\ge 
\| v\| _{M_{cb}A(G)}\ge \| v\| _{MA(G)}.\]
Moreover, if $v \in A(G)$, then 
\[\| v\| _{A(G)}=\| v\|_{B(G)}.\]
Furthermore, in case $G$ is an amenable group, we have 
\[B(G)={M}_{cb}A(G) = {M}A(G) \] 
and that 
\[\| v\| _{B(G)} = \| v\| _{M_{cb}A(G)} = \| v\| _{MA(G)}\]
for any $v\in B(G)$.

The following characterization of the $cb$-multipliers of $A(G)$ was given by Jolissaint in \cite{Jolissaint}: 

\begin{theorem}\label{Jolissaint} 
Let G be as above and let $\phi$ be a function on G. Then the
following conditions are equivalent:
\begin{itemize}
\item[i)] $\phi$ belongs to $M_{cb}A(G)$;
\item[ii)] there exist a Hilbert space $\mathcal{H}$ and bounded continuous functions $\xi, \eta$
from $G$ to $\mathcal{H}$ such that
\[\phi(t^{-1}s) = \langle \xi(s) | \eta(t)\rangle_{\mathcal{H}}\]
for all $s, t \in G$. 
\end{itemize} 

Moreover, if these conditions are satisfied, then
\[\|\phi\|_{M_{cb}A(G)}=\inf \|\xi\|_{\infty} \|\eta\|_{\infty} \]
where the infimum is taken over all pairs as in condition ii).
\end{theorem}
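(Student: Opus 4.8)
The plan is to transfer the problem to the theory of Schur multipliers on $\mathcal B(L^2(G))$ and then to apply the Grothendieck--Haagerup factorization of such multipliers. The factorization I intend to use is the following: a function $m\colon G\times G\to\mathbb C$ is a Schur multiplier (acting on operators through their integral kernels) with Schur norm $\le C$ if and only if it can be written as $m(s,t)=\langle a(s)\,|\,b(t)\rangle_{\mathcal H}$ for some Hilbert space $\mathcal H$ and bounded functions $a,b\colon G\to\mathcal H$, and in that case the Schur norm equals $\inf\|a\|_\infty\|b\|_\infty$ over all such representations. The whole statement then amounts to matching $M_{cb}A(G)$ with the Schur multipliers whose kernel is \emph{translation invariant}, i.e. of the form $m(s,t)=\phi(t^{-1}s)$, together with the identification of the Schur and completely bounded norms; this matching is the Bo\.zejko--Fendler correspondence.

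For (ii) $\Rightarrow$ (i), suppose $\phi(t^{-1}s)=\langle\xi(s)\,|\,\eta(t)\rangle$ with $\xi,\eta$ bounded and continuous. Then the kernel $m(s,t)=\phi(t^{-1}s)$ is already in Grothendieck form, hence is a Schur multiplier with Schur norm $\le\|\xi\|_\infty\|\eta\|_\infty$, and the induced Schur map $S_m$ is completely bounded on $\mathcal B(L^2(G))$ with the same bound. I would then verify, by a direct computation on the generating operators $\lambda(g)$, that $S_m$ leaves $VN(G)$ invariant and restricts there to the normal map sending $\lambda(g)$ to $\phi(g)\lambda(g)$; since $M_\phi^{*}$ is the unique such normal map, $S_m|_{VN(G)}=M_\phi^{*}$. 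Consequently $M_\phi$ is a completely bounded multiplier with $\|\phi\|_{M_{cb}A(G)}\le\|\xi\|_\infty\|\eta\|_\infty$, and taking the infimum gives one of the two norm inequalities. (Reconciling the precise shape $\phi(t^{-1}s)$ in the statement with the kernel $\phi(st^{-1})$ that literally diagonalises $\lambda(g)$ in the conventions fixed here is handled by the transpose and inversion symmetries of Schur multipliers, both of which preserve the Schur norm.)

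For (i) $\Rightarrow$ (ii), assume $M_\phi\in CB(A(G))\cap MA(G)$ with completely bounded norm $C$. The dual $M_\phi^{*}\colon VN(G)\to VN(G)$ is a normal completely bounded map with $M_\phi^{*}(\lambda(g))=\phi(g)\lambda(g)$. The key step is to promote $M_\phi^{*}$, which lives only on the weak$^{*}$-closed span of the $\lambda(g)$, to a Schur multiplier on all of $\mathcal B(L^2(G))$ with invariant kernel $\phi(t^{-1}s)$ and with Schur norm no larger than $C$. This is the Bo\.zejko--Fendler transference: since $M_\phi^{*}$ commutes with conjugation by the right regular representation, one extends it to a normal $L^\infty(G)$-bimodule map on $\mathcal B(L^2(G))$, and by Haagerup's characterization such maps are exactly Schur multipliers, the right invariance forcing the kernel to depend only on $t^{-1}s$. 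Once $\phi(t^{-1}s)$ is known to be a Schur multiplier of norm $\le C$, the Grothendieck--Haagerup factorization supplies $\mathcal H$ and bounded $\xi,\eta$ with $\phi(t^{-1}s)=\langle\xi(s)\,|\,\eta(t)\rangle$ and $\|\xi\|_\infty\|\eta\|_\infty\le C$. To finish I would upgrade $\xi,\eta$ to continuous functions, which is possible because $\phi$ itself is continuous; combining the two inequalities then yields $\|\phi\|_{M_{cb}A(G)}=\inf\|\xi\|_\infty\|\eta\|_\infty$.

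The main obstacle is precisely the transference step in (i) $\Rightarrow$ (ii): extending $M_\phi^{*}$ from the group von Neumann algebra to an $L^\infty(G)$-bimodule map on the whole of $\mathcal B(L^2(G))$, and doing so \emph{without any increase} in the completely bounded norm. This is where the translation invariance of the group has to be converted into the diagonal invariance of a Schur kernel, and where equality of the two norms (rather than mere comparability) is won or lost; arranging for continuous representing functions $\xi,\eta$ is a secondary but genuine technical point that rests on the continuity hypothesis on $\phi$.
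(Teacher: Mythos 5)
You should note at the outset that the paper contains no proof of this statement: Theorem \ref{Jolissaint} is quoted as background and attributed to Jolissaint \cite{Jolissaint}, so your argument can only be measured against the standard proofs in the literature (Jolissaint, Bo$\dot{z}$ejko--Fendler \cite{Bozejko-Fendler}). Your direction (ii) $\Rightarrow$ (i) is essentially sound, though for non-discrete $G$ the phrase ``direct computation on the generating operators $\lambda(g)$'' hides a real issue: operators on $L^2(G)$ do not have kernels, so Schur multipliers must be treated measure-theoretically there. The clean fix, which is in fact Jolissaint's own argument, avoids Schur multipliers altogether: define $V_\xi, V_\eta: L^2(G) \to L^2(G)\otimes \mathcal H$ by $(V_\xi f)(s) = f(s)\xi(s^{-1})$ and $(V_\eta f)(s) = f(s)\eta(s^{-1})$; a direct computation shows $V_\eta^*(\lambda(g)\otimes 1_{\mathcal H})V_\xi = \phi(g)\lambda(g)$, so $T \mapsto V_\eta^*(T \otimes 1_{\mathcal H})V_\xi$ is a normal completely bounded map of cb-norm at most $\|\xi\|_\infty\|\eta\|_\infty$ carrying $VN(G)$ into $VN(G)$, whose pre-adjoint is multiplication by $\phi$ on $A(G)$.

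The genuine gap is in (i) $\Rightarrow$ (ii), at exactly the step you yourself flag as ``the main obstacle.'' You assert that $M_\phi^*$ extends, with no increase of cb-norm, to a normal $L^\infty(G)$-bimodule map on $\mathcal B(L^2(G))$ because it ``commutes with conjugation by the right regular representation.'' That commutation is vacuous: since left and right translations commute, $\rho(g)\lambda(h)\rho(g)^* = \lambda(h)$, so $\mathrm{Ad}\,\rho(g)$ is the identity on $VN(G)$ and \emph{every} map on $VN(G)$ commutes with it; it therefore provides no leverage whatsoever for producing an invariant extension. Worse, the $L^\infty(G)$-bimodule property cannot even be formulated for $M_\phi^*$ on its domain, because $VN(G)$ is not an $L^\infty(G)$-sub-bimodule of $\mathcal B(L^2(G))$; a generic Arveson--Wittstock extension will not be a bimodule map, and the natural repairs (averaging over $\mathrm{Ad}\,\rho(G)$, or over the diagonal unitaries) respectively require amenability of $G$ or destroy the property of being an extension of $M_\phi^*$. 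Indeed, in the literature the transference you want (equality of the cb-multiplier norm with the Schur norm of the translation-invariant kernel) is itself deduced \emph{from} the factorization (ii), so your plan runs a serious risk of circularity. The standard way to close the gap bypasses Schur multipliers: $M_\phi^*$ is normal (it is an adjoint) and completely bounded, so the representation theorem for normal completely bounded maps on von Neumann algebras gives $V, W: L^2(G) \to L^2(G)\otimes\mathcal H$ with $\|V\|\|W\| \le \|\phi\|_{M_{cb}A(G)}$ and $M_\phi^*(T) = W^*(T\otimes 1_{\mathcal H})V$ for all $T \in VN(G)$. Fix any unit vector $f \in L^2(G)$ and set $\xi_0(s) = (\lambda(s)^*\otimes 1)V\lambda(s)f$ and $\eta_0(t) = (\lambda(t)^*\otimes 1)W\lambda(t)f$. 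Then
\begin{equation*}
\langle \xi_0(s) \,|\, \eta_0(t)\rangle = \langle (\lambda(ts^{-1})\otimes 1)V\lambda(s)f \,|\, W\lambda(t)f\rangle = \langle M_\phi^*(\lambda(ts^{-1}))\lambda(s)f \,|\, \lambda(t)f\rangle = \phi(ts^{-1}),
\end{equation*}
since $M_\phi^*(\lambda(ts^{-1}))\lambda(s)f = \phi(ts^{-1})\lambda(t)f$; putting $\xi(s) = \xi_0(s^{-1})$ and $\eta(t) = \eta_0(t^{-1})$ yields exactly $\phi(t^{-1}s) = \langle \xi(s)|\eta(t)\rangle$. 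Boundedness ($\|\xi\|_\infty \le \|V\|$, $\|\eta\|_\infty \le \|W\|$) and continuity are immediate from the strong continuity of $\lambda$, so the continuity upgrade you were worried about disappears as well, and combining with (ii) $\Rightarrow$ (i) gives the stated norm equality.
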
 

We now finish this section by recalling the link between $cb$-multipliers of $A(G)$ and Schur multipliers on $\mathcal{B}(L^2(G))$.  In fact, since we will only require this correspondence in the case of discrete groups, we content ourselves with this special case.

\begin{definition}\label{Schur Multiplier} Let $G$ be a discrete group.  A function $\sigma:G \times G \to \mathbb C$ is called a Schur multiplier on $\mathcal{B}(\ell^2(G))$ if the infinite matrix 
\[S_{\sigma}T:=[\sigma(g,h)T(g,h)]_{(g,h)\in G \times G},\]
belongs to  $\mathcal{B}(\ell^2(G))$ for all $[T(g,h)]_{(g,h)\in G \times G}\in \mathcal{B}(\ell^2(G))$.
We denote the vector space of all Schur multipliers of $\mathcal{B}(\ell^2(G))$ by $V^{\infty}(G)$. Moreover, $V^{\infty}(G)$
becomes a pointwise Banach algebra of functions on $G\times G$ when $\sigma$ is  given the norm of $S_{\sigma}$ above as an operator on 
$\mathcal{B}(\ell^2(G))$.
\end{definition} 

In 1984, Bo$\dot{z}$jeko and Fendler established an isometric homomorphism between $M_{cb}A(G)$ and $V^{\infty}(G)$ \cite{Bozejko-Fendler}. 
We state the following  version of their result for discrete groups.

\begin{theorem}\label{Schur}
Let G be a discrete group, let $\mathcal{H}$ be a Hilbert space, and let $\xi, \eta:G \to \mathcal H$
be bounded functions. Then the function $\sigma_{\xi,\eta} : G \times G \to \mathbb{C}$  defined by
\[\sigma_{\xi,\eta}(g, h) = \langle \xi(h) |\eta(g)\rangle_{\mathcal{H}} \qquad ((g, h) \in G \times G)\]
belongs to $V^{\infty}(G)$, and furthermore $\|\sigma_{\xi,\eta}\|_{V^{\infty}(G)} \leq \|\xi\|_{\infty} \|\eta\|_{\infty}$. 

Conversely, every $\sigma \in V^{\infty}(G)$ is of the above form and 
\[
\|\sigma\|_{V^\infty(G)} = \inf \|\xi\|_\infty \|\eta\|_\infty,
\]
where the infimum is taken over all possible representations $\sigma = \sigma_{\xi,\eta}$ as above.
\end{theorem}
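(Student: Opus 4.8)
The plan is to prove the two halves separately, establishing first the asserted inequality and then its reverse.

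I would begin with the forward direction, which is the routine factorization. Given bounded functions $\xi, \eta \colon G \to \mathcal{H}$, introduce the two ``column'' operators $P, Q \colon \ell^2(G) \to \ell^2(G) \otimes \mathcal{H}$ determined by $P e_h = e_h \otimes \xi(h)$ and $Q e_g = e_g \otimes \eta(g)$. A direct computation of $\|Pf\|^2 = \sum_h |f(h)|^2 \|\xi(h)\|^2$ gives $\|P\| \leq \|\xi\|_\infty$, and similarly $\|Q\| \leq \|\eta\|_\infty$. The key observation is the operator identity $S_{\sigma_{\xi,\eta}} T = Q^* (T \otimes I_{\mathcal{H}}) P$, valid for every $T \in \mathcal{B}(\ell^2(G))$; this is verified by computing the $(g,h)$ matrix entry of the right-hand side, which equals $\langle (T \otimes I_{\mathcal{H}}) P e_h \mid Q e_g \rangle = \langle T e_h \mid e_g \rangle \langle \xi(h) \mid \eta(g) \rangle = \sigma_{\xi,\eta}(g,h) \, T(g,h)$. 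Since $\|T \otimes I_{\mathcal{H}}\| = \|T\|$, the factorization immediately yields $\|S_{\sigma_{\xi,\eta}} T\| \leq \|\eta\|_\infty \|\xi\|_\infty \|T\|$, so $\sigma_{\xi,\eta} \in V^\infty(G)$ with $\|\sigma_{\xi,\eta}\|_{V^\infty(G)} \leq \|\xi\|_\infty \|\eta\|_\infty$.

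For the converse I would first reduce to finite subsets. For each finite $F \subseteq G$ the compression of $\sigma$ to $F \times F$ defines a Schur multiplier of $\mathcal{B}(\ell^2(F)) = M_{|F|}$ whose norm is at most $\|\sigma\|_{V^\infty(G)}$, so it suffices to produce, for each such $F$, families $(\xi_h)_{h \in F}$ and $(\eta_g)_{g \in F}$ in some Hilbert space with $\sup_h \|\xi_h\| \leq \|\sigma\|_{V^\infty(G)}$, $\sup_g \|\eta_g\| \leq \|\sigma\|_{V^\infty(G)}$, and $\sigma(g,h) = \langle \xi_h \mid \eta_g \rangle$. The uniform norm bound then lets me glue these local factorizations into global bounded functions $\xi, \eta \colon G \to \mathcal{K}$ by a weak-$*$ compactness (ultraproduct) argument over the net of finite subsets, yielding the representation $\sigma = \sigma_{\xi,\eta}$ on all of $G$ with $\inf \|\xi\|_\infty \|\eta\|_\infty \leq \|\sigma\|_{V^\infty(G)}$; combined with the forward direction this gives the claimed equality of norms.

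The core of the converse, and the main obstacle, is the finite-dimensional factorization: a Schur multiplier $S_\sigma$ on $M_n$ with $\|S_\sigma\| \leq 1$ must factor through unit-bounded vectors. The essential point is that $S_\sigma$ is a bimodule map over the diagonal MASA $D_n \subseteq M_n$, and for such maps the completely bounded norm coincides with the ordinary operator norm; this follows from Smith's lemma on completely bounded maps with finite-dimensional range together with the diagonal bimodularity, giving $\|S_\sigma\|_{cb} = \|S_\sigma\| \leq 1$. Once this is in hand, I would apply the Wittstock--Paulsen representation theorem to write $S_\sigma(T) = V^* \pi(T) W$ for a $*$-representation $\pi$ and contractions $V, W$. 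Using the $D_n$-bimodularity of $S_\sigma$ one can arrange $V$ and $W$ to intertwine the diagonal actions, so that evaluating the identity on the matrix units $E_{g,h}$ exhibits $\sigma(g,h)$ as an inner product $\langle \xi_h \mid \eta_g \rangle$ of vectors extracted from $W e_h$ and $V e_g$, with norms controlled by $\|W\|, \|V\| \leq 1$. I expect the verification that the cb-norm equals the ordinary norm for diagonal-bimodule maps to be the delicate step, since it is precisely what converts the scalar boundedness hypothesis into the Hilbert-space-valued data required for the factorization.
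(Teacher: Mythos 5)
Your proposal is correct, but note that the paper itself gives no proof of this statement: Theorem \ref{Schur} is quoted as a known result of Bo$\dot{z}$ejko and Fendler, with a citation to \cite{Bozejko-Fendler}, so comparing routes really means comparing your argument with the classical literature. Your forward direction, via the factorization $S_{\sigma_{\xi,\eta}}T = Q^*(T \otimes I_{\mathcal H})P$ with the column operators $P e_h = e_h \otimes \xi(h)$, $Q e_g = e_g \otimes \eta(g)$, is complete and correct as written. Your converse is a correct outline of the standard proof (essentially Theorem 8.7 of \cite{Paulsen}, also in Pisier's books): compress to finite subsets, observe that a Schur multiplier is a bimodule map over the diagonal masa, invoke Smith's lemma to upgrade $\|S_\sigma\|$ to $\|S_\sigma\|_{cb}$, apply the Wittstock--Paulsen representation $S_\sigma(\cdot) = V^*\pi(\cdot)W$, and extract the vectors from matrix units (fixing $h_0 \in F$, take $\xi_h = \pi(E_{h_0,h})W e_h$ and $\eta_g = \pi(E_{h_0,g})V e_g$, whose inner products recover $\sigma(g,h)$ with norms controlled by $\|W\|$ and $\|V\|$), then glue over finite sets by an ultraproduct. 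Two small points to tighten, neither a gap: (i) in the finite-set reduction you ask that $\sup_h\|\xi_h\|$ and $\sup_g\|\eta_g\|$ each be bounded by $\|\sigma\|_{V^\infty(G)}$, which would only give the product bound $\|\sigma\|^2$; you should first normalize $\|\sigma\|_{V^\infty(G)} = 1$ (as you implicitly do when you state the finite-dimensional claim) or demand the product bound; (ii) the gluing works precisely because the local factorizations reproduce $\sigma(g,h)$ exactly, not approximately, for every $F$ containing $g$ and $h$, so the ultralimit of the inner products is $\sigma(g,h)$ on the nose. What your route buys is self-containedness, and it correctly identifies the genuinely delicate step (norm equals cb-norm for diagonal bimodule maps), which is exactly the point that makes this theorem nontrivial; what the paper's citation buys is brevity, since for its purposes (Proposition \ref{Leinert}, where the theorem is used only to rule out $1_E \in M_{cb}A(\mathbb{F}_\infty)$ via upper-triangular truncation) only the validity of the statement matters, not its mechanism.
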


\section{Uniformly Discrete Subsets, Leinert Sets and Complemented Ideals}

In this section, we will see how Leinert sets and Strong Leinert sets generate complemented and completely complemented ideals 
respectively. In so doing we will exhibit an closed ideal in $A(\mathbb{F}_N)$ that is complemented but not even completely weakly complemented. We 
begin though by first looking at the properties of uniformly discrete subsets of $G$. 

\subsection{Uniformly Discrete Sets} 

\begin{definition} \label{uniformly_discrete}
Let $G$ be a locally compact group and let $E$ be a subset of $G$.
Then $E$ is said to be \textbf{uniformly discrete} in $G$ if there
exists some neighbourhood $\mathcal U$ of the identity $e \in G$
such that for all $g_1, g_2 \in E$
\begin{eqnarray*}
g_1\mathcal U \cap g_2 \mathcal U = \left\{ \begin{array}{ll}
g_1 \mathcal{U}, & \textrm{if $g_1 = g_2$}\\
\emptyset, & \textrm{if $g_1 \ne g_2$}
\end{array} \right.
\end{eqnarray*}
\end{definition}

\noindent \textbf{Observation:} It is a relatively easy task to show that $E$ is 
uniformly discrete if and only if there exists a neigbourhood $\mathcal{U}$ of $\{e\}$ 
such that if $g_1,g_2 \in E$ with  $g_1\not =g_2$, then $g_1^{-1}g_2 \not \in \mathcal{U}$. \\

\noindent \textbf{Examples:} If $G$ is a discrete group, then any
subset $E \subseteq G$ is uniformly discrete in $G$. Indeed, just
let $\mathcal U = \{e\}$ in the above definition.  More generally,
if $H$ is a discrete closed subgroup of a locally compact group $G$,
then any subset $E \subseteq H$ is a uniformly discrete subset of
$G$. To see this, just let $\mathcal U$ be any open neighbourhood of
$e$ for which $\mathcal U \cap H = \{e\}$. Clearly any finite subset of
a locally compact group $G$ is uniformly discrete in $G$ as well.

A more appropriate name for such sets would be \emph{left uniformly discrete}. However, we will 
not need to consider the right-sided analog in this paper. This does, however, lead us to consider the following question: \\

\noindent \textbf{Question:}  If $E \subseteq G$ is uniformly
discrete in $G$, then is $E^{-1}$ necessarily uniformly discrete in
$G$? \\

It is not hard to see that if $G$ is discrete or abelian, then the
answer is always yes. If $E \subseteq H$ for some closed discrete
subgroup $H \le G$, then the answer is yes as well. \\

We now show how the presence of uniformly discrete sets allows us to construct 
useful orthonormal families in $L^2(G)$.  Let $G$ be any locally compact group and let $E$ be some fixed uniformly discrete subset of $G$.  Let $\mathcal U$ be some open
neighbourhood of $e$ satisfying Definition \ref{uniformly_discrete}
for the set $E$.  Choose an symmetric neighbourhood $\mathcal V$ of $e$ such
that $\mathcal V^2 \subseteq \mathcal U$.  Now define a unit vector $\xi \in
L^2(G)$ by setting $$\xi = \frac{1_{\mathcal V}}{\|1_{\mathcal
V}\|_2},$$ and let $u \in P(G)\cap C_c(G)$ be defined by the
equation $$u = \overline{\xi}*\check{\xi} = \langle
\lambda(\cdot)\xi | \xi \rangle = \lambda_{\xi,\xi}.$$  It is easy to see that for the 
function $u$ we have $\textrm{supp}u \subseteq \mathcal U$.  Let us assume from now on that our function $u =
\overline{\xi}*\check{\xi}$ has been fixed.

\begin{lemma} \label{ONS}
The family $\{\lambda(h)\xi\}_{h \in E}$ forms an orthonormal system
in $L^2(G)$.
\end{lemma}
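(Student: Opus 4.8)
The plan is to check the two defining properties of an orthonormal system separately: that each $\lambda(h)\xi$ is a unit vector, and that distinct members are mutually orthogonal. Normalization is immediate, since $\lambda$ is a unitary representation and $\|\xi\|_2 = 1$ by construction, so $\|\lambda(h)\xi\|_2 = \|\xi\|_2 = 1$ for every $h \in E$.

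For orthogonality, I would fix $h_1, h_2 \in E$ with $h_1 \neq h_2$ and compute the inner product directly. Using the formula $(\lambda(h)\xi)(y) = \xi(h^{-1}y)$ together with the convention that $\langle \cdot | \cdot \rangle$ is conjugate-linear in the second variable, this gives
$$\langle \lambda(h_1)\xi | \lambda(h_2)\xi \rangle = \int_G \xi(h_1^{-1}y)\,\overline{\xi(h_2^{-1}y)}\,dy.$$
Because $\xi$ is a scalar multiple of $1_{\mathcal V}$, the factor $\xi(h_i^{-1}y)$ is supported precisely on the set $\{y : y \in h_i\mathcal V\}$, so the integrand is supported on $h_1\mathcal V \cap h_2\mathcal V$. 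The entire matter thus reduces to showing that this intersection is empty.

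To establish the disjointness I would first observe that $\mathcal V \subseteq \mathcal U$: since $e \in \mathcal V$, any $v \in \mathcal V$ satisfies $v = v\cdot e \in \mathcal V^2 \subseteq \mathcal U$. Consequently $h_i\mathcal V \subseteq h_i\mathcal U$, and since $h_1 \neq h_2$ the defining property of the uniformly discrete set $E$ (Definition \ref{uniformly_discrete}) gives $h_1\mathcal U \cap h_2\mathcal U = \emptyset$; hence $h_1\mathcal V \cap h_2\mathcal V = \emptyset$ as well. The integrand therefore vanishes identically and the inner product is zero, which completes the verification.

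I do not anticipate any serious obstacle, as this is a short support computation. The one point to handle with a little care is the reduction to disjointness of the translates $h_i\mathcal V$: alternatively, one can argue via the symmetry of $\mathcal V$ by noting that a common point $y = h_1 v_1 = h_2 v_2$ would force $h_2^{-1}h_1 = v_2 v_1^{-1} \in \mathcal V^2 \subseteq \mathcal U$, contradicting the Observation following Definition \ref{uniformly_discrete}. Either formulation makes clear that it is exactly the containment $\mathcal V^2 \subseteq \mathcal U$, built into the choice of $\mathcal V$, that forces the translates apart.
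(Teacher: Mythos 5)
Your proposal is correct and follows essentially the same route as the paper: unitarity of $\lambda$ (translation invariance of Haar measure) for normalization, and for orthogonality the same computation exhibiting the inner product as an integral supported on $h_1\mathcal V \cap h_2\mathcal V$, which is empty because $\mathcal V \subseteq \mathcal U$ and $E$ is uniformly discrete. Your explicit justification of $\mathcal V \subseteq \mathcal U$ (and the alternative via $h_2^{-1}h_1 \in \mathcal V^2$) just makes precise a containment the paper uses implicitly.
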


\begin{proof} For any $h \in E$ the left translation-invariance of Haar
measure gives $$\|\lambda(h)\xi\|_2 = \|\xi\|_2 = 1.$$ If $h_1,h_2
\in E$ are such that $h_2 \ne h_1$, then
\begin{eqnarray*}
\langle \lambda(h_1)\xi|\lambda(h_2)\xi \rangle &=& \int_G
\xi(h_1^{-1}g)\overline{\xi(h_2^{-1}g)}dg \\
&=& \frac{|h_1 \mathcal V \cap h_2 \mathcal V|}{|\mathcal V|}
\\
&\le& \frac{|h_1 \mathcal U \cap h_2 \mathcal U|}{|\mathcal V|} \\
&=& \frac{|\emptyset|}{|\mathcal V|} = 0 \ \ \ \ (\textrm{since $h_1 \ne h_2$}).
\end{eqnarray*} \end{proof} 

We now define a linear map $\Gamma_u:\ell^2(E) \to C_0(G) \cap
L^2(G)$ which is given by
\[
\Gamma_u \varphi = \sum_{h \in E} \varphi(h) (\delta_h*u),
\]
where $(\delta_h*u)(x) = u(h^{-1}x)$ and $(u*\delta_h) = u(xh)$ for $h,x \in G$.
The following Theorem shows that this map has some interesting properties:

\begin{theorem} \label{embedding_l2}
Let $E \subseteq G$ and $\Gamma_u$ be as above.  Then the
following are true: 
\begin{itemize} 
\item[i)] $(\Gamma_u \varphi) \big|_E = \varphi$ for all $\varphi \in
\ell^2(E).$ 
\item[ii)]$\Gamma_u(\ell^2(E)) \subseteq A(G)$ and the map
$\Gamma_u:\ell^2(E) \to A(G)$ is a contraction. 
\item[iii)] $\Gamma_u: \overline{\ell^2(E)_r} \to A(G)$ is a complete contraction. 
\end{itemize} 
\end{theorem}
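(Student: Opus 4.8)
The plan is to reduce everything to the single observation that each summand $\delta_h * u$ is itself a coefficient function of the left regular representation. Since $u = \lambda_{\xi,\xi}$, for $h \in E$ we have
\[
(\delta_h * u)(x) = u(h^{-1}x) = \langle \lambda(h^{-1}x)\xi \mid \xi \rangle = \langle \lambda(x)\xi \mid \lambda(h)\xi \rangle = \lambda_{\xi,\lambda(h)\xi}(x),
\]
so that $\delta_h * u = \lambda_{\xi,\lambda(h)\xi}$. Summing against $\varphi \in \ell^2(E)$ and using conjugate-linearity of $\lambda_{\xi,\cdot}$ in its second slot, I would write $\Gamma_u \varphi = \lambda_{\xi,\eta_\varphi}$ where $\eta_\varphi = \sum_{h \in E} \overline{\varphi(h)}\,\lambda(h)\xi$. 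By Lemma \ref{ONS} the family $\{\lambda(h)\xi\}_{h \in E}$ is orthonormal, so the series defining $\eta_\varphi$ converges in $L^2(G)$ with $\|\eta_\varphi\|_2 = \|\varphi\|_{\ell^2(E)}$; consequently the partial sums of $\Gamma_u\varphi$ converge in $A(G)$ and $\Gamma_u\varphi \in A(G)$. This already yields $\|\Gamma_u\varphi\|_{A(G)} \le \|\xi\|_2 \|\eta_\varphi\|_2 = \|\varphi\|_{\ell^2(E)}$, which is statement (ii).

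For (i), I would evaluate $\Gamma_u\varphi$ at a point $h_0 \in E$. Since $A(G) \hookrightarrow C_0(G)$ contractively, pointwise evaluation commutes with the ($A(G)$-convergent) sum, giving $(\Gamma_u\varphi)(h_0) = \sum_{h \in E} \varphi(h)\, u(h^{-1}h_0)$. Here the support condition $\mathrm{supp}\,u \subseteq \mathcal V^2 \subseteq \mathcal U$ is exactly what is needed: $u(h^{-1}h_0) \ne 0$ forces $h^{-1}h_0 \in \mathcal U$, and by the uniform discreteness of $E$ (in the form of the Observation following Definition \ref{uniformly_discrete}) this is impossible unless $h = h_0$. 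Since $u(e) = \langle \xi \mid \xi\rangle = 1$, only the term $h = h_0$ survives and $(\Gamma_u\varphi)(h_0) = \varphi(h_0)$, proving (i).

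The substance of the theorem is (iii), and the plan is to factor $\Gamma_u$ as a complete isometry followed by a complete contraction. The orthonormality from Lemma \ref{ONS} shows that $\delta_h \mapsto \lambda(h)\xi$ extends to an isometric embedding of $\ell^2(E)$ onto an orthonormal system in $L^2(G)$; after passing to the appropriate conjugate row structures this induces a complete isometry $\iota \colon \overline{\ell^2(E)_r} \to \overline{L^2(G)_r}$, since any Hilbert-space isometry is completely isometric for the corresponding (conjugate) row operator spaces \cite{Effros_Ruan}. It then remains to verify that the map $\Theta \colon \overline{L^2(G)_r} \to A(G)$, $\overline\eta \mapsto \lambda_{\xi,\eta}$, is a complete contraction and that $\Gamma_u = \Theta \circ \iota$ once the conjugate-linear identifications are made consistently.

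I expect the complete contractivity of $\Theta$ to be the main obstacle, and I would establish it by dualizing. Using $(\mathcal H_r)^* = \overline{\mathcal H_c}$ and $\overline{\overline{\mathcal H}} = \mathcal H$ recorded above, one has $(\overline{L^2(G)_r})^* = L^2(G)_c$, and a direct computation with the $A(G)$–$VN(G)$ duality $\langle \lambda_{\xi,\eta}, T\rangle = \langle T\xi \mid \eta\rangle$ identifies the adjoint $\Theta^* \colon VN(G) \to L^2(G)_c$ with the evaluation map $T \mapsto T\xi$. The latter is completely contractive because, under the standard identification $L^2(G)_c = CB(\mathbb C, L^2(G))$, it is precomposition by the map $\mathbb C \to L^2(G)$, $1 \mapsto \xi$, whose completely bounded norm equals $\|\xi\|_2 = 1$; the complete contractivity of $\Theta$ follows by taking preadjoints. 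The one point demanding genuine care throughout is the bookkeeping of the conjugate-linear identifications, so that the conjugations appearing in $\eta_\varphi$, in $\Theta$, and in the two applications of $\overline{(\cdot)}$ cancel correctly; for this reason I would fix the orientations of $\Theta$ and $\iota$ to match before composing them.
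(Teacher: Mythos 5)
Your proof is correct, and parts (i) and (ii) match the paper's argument essentially verbatim: the identity $\delta_h * u = \lambda_{\xi,\lambda(h)\xi}$, Lemma \ref{ONS}, and the coefficient bound $\|\lambda_{\xi,\eta}\|_{A(G)} \le \|\xi\|_2\|\eta\|_2$ (your support argument for (i) is the same fact as the paper's appeal to orthogonality, since both reduce to $h_1\mathcal{U} \cap h_2\mathcal{U} = \emptyset$). Where you genuinely diverge is part (iii). The paper writes $\Gamma_u^{(n)}[\overline{\varphi_{ij}}]$ as the image under $I_n \otimes q$ of $\bigl(\sum_{h}[\overline{\varphi_{ij}(h)}] \otimes \overline{\lambda(h)\xi}\bigr) \otimes \xi$, where $q:\overline{L^2(G)_r}\,\widehat{\otimes}\,L^2(G)_c \to A(G)$ is the complete quotient map arising as the preadjoint of $VN(G) \hookrightarrow \mathcal{B}(L^2(G))$; it then splits off $\|\xi\|$ using the matricial cross-norm property of $\widehat{\otimes}$ and computes the remaining row norm by orthonormality. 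You instead factor $\Gamma_u = \Theta \circ \iota$ and never touch the projective tensor product: your complete isometry $\iota$ encapsulates exactly the paper's orthonormality computation (row matrix norms depend only on inner products, so the isometry $V\varphi = \sum_h \varphi(h)\lambda(h)\xi$ induces a complete isometry of conjugate row spaces), while the complete contractivity of $\Theta:\overline{\eta} \mapsto \lambda_{\xi,\eta}$ is obtained by identifying $\Theta^*$ with $T \mapsto T\xi$ from $VN(G)$ into $L^2(G)_c$ and invoking $\|\Theta\|_{cb} = \|\Theta^*\|_{cb}$. In effect your $\Theta$ equals $q(\,\cdot\, \otimes \xi)$, so you have replaced the complete-quotient property of $q$ by a direct adjoint computation; this is more elementary and self-contained, needing only the duality $(\overline{\mathcal H_r})^* = \mathcal H_c$ and the standard fact that evaluation at a unit vector is completely contractive into the column space. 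What the paper's formulation buys in exchange is reusability: the same map $q$ and tensor decomposition are invoked again in Theorem \ref{embedding_l2_column} and in the proof of Corollary \ref{embedding_VN_l2}, whereas your factorization would have to be redone (with columns in place of rows) in each instance. Your conjugation bookkeeping does close up correctly: with the paper's conventions, $\Gamma_u(\overline{\varphi}) = \lambda_{\xi, V\varphi} = \Theta(\iota(\overline{\varphi}))$, so the two conjugations cancel as you anticipated.
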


\begin{proof} 
\begin{itemize}
\item[i)] By Lemma \ref{ONS}, given any $h_1 \ne h_2 \in E$ we
have $$u(h_2^{-1}h_1) = \langle \lambda(h_1)\xi | \lambda(h_2) \xi \rangle = 0
.$$
Consequently, if $s \in E$ and $\varphi \in \ell^2(E)$, then
\begin{eqnarray*}
(\Gamma_u \varphi)(s) &=& \sum_{h \in E} \varphi(h) u(h^{-1}s) \\
&=& \varphi(s)u(e) \\
&=& \varphi(s).
\end{eqnarray*}
That is, $\Gamma_u\varphi \big|_E = \varphi$ for all $\varphi
\in \ell^2(E)$. 
\item[ii)]  Let $\varphi \in \ell^2(E)$.  Then we have
\begin{eqnarray*}
(\Gamma_u \varphi)(x) &=& \sum_{h \in E} \varphi(h)u(h^{-1}x)  = \sum_{h \in E} \varphi(h) \langle \lambda(h^{-1}x)\xi | \xi \rangle \\
&=& \Big \langle \lambda(x)\xi \Big| \sum_{h \in E} \overline{\varphi(h)} \lambda(h)\xi \Big \rangle.
\end{eqnarray*}
Thus $$\Gamma_u\varphi = \lambda_{\xi, \sum_{h \in E} \overline{\varphi(h)} \lambda(h)\xi}\in A(G)$$ and
\begin{eqnarray*} \|\Gamma_u \varphi\|_{A(G)} &\le&
\|\xi\|_2\Big\|\sum_{h \in E}
\overline{\varphi(h)}\lambda(h)\xi\Big\|_2 = \|\overline{\varphi}\|_2 = \|\varphi\|_2.
\end{eqnarray*}
Therefore $\Gamma_u:\ell^2(E) \to A(G)$ is a contraction. 
\item[iii)] We will now show that $\Gamma_u:\overline{\ell^2(E)_r} \to A(G)$ is a complete contraction.  
Recall that the dual pairing 
\begin{eqnarray*}
\langle \overline{\eta}* \check{\xi}, T \rangle = \langle T \xi | \eta\rangle && (\overline{\eta} * \check{\xi} \in A(G), \ T \in VN(G))
\end{eqnarray*}
identifies $A(G)^*$ with $VN(G)$.  Now define a linear map $$q:\overline{L^2(G)_r} \widehat{\otimes} L^2(G)_c \to A(G)$$ by
$$q(\overline{\eta} \otimes \xi) = \overline{\eta}*\check{\xi}.$$  By \cite{Effros_Ruan} we have
$$(\overline{L^2(G)_r} \widehat{\otimes}L^2(G)_c)^* = \mathcal {CB}(L^2(G)_c) = \mathcal B(L^2(G))$$
completely isometrically.  Using this fact together with the above duality
between $A(G)$ and $VN(G)$, it follows that $q$ is nothing
other than the pre-adjoint of the canonical completely isometric inclusion $VN(G)
\hookrightarrow \mathcal B(L^2((G))$.  Consequently
$$q:\overline{L^2(G)_r} \widehat{\otimes}L^2(G)_c \to A(G)$$ is a
complete quotient map.

Now let $n \in \mathbb N$, $[\overline{\varphi_{ij}}] \in
M_n(\overline{\ell^2(E)_r})$, and note that
\begin{eqnarray*}
\Gamma_u^{(n)}([\overline{\varphi_{ij}}]) &=& \sum_{h \in
E}[\overline{\varphi_{ij}(h)}] \otimes (\delta_h*u) \\
&=&\sum_{h \in E}[\overline{\varphi_{ij}(h)}] \otimes
(\overline{\lambda(h)\xi}*\check{\xi}) \\
&=&(I_n \otimes q)\Big(\sum_{h \in E}[\overline{\varphi_{ij}(h)}]
\otimes \overline{\lambda(h)\xi} \otimes \xi \Big) \\
&=&(I_n \otimes q)\Big(\Big(\sum_{h \in
E}[\overline{\varphi_{ij}(h)}] \otimes \overline{\lambda(h)\xi}\Big)
\otimes \xi \Big).
\end{eqnarray*}
Therefore
\begin{eqnarray*}
\|\Gamma_u^{(n)}([\overline{\varphi_{ij}}])\|_{M_n(A(G))} &=&
\Big\|(I_n \otimes q)\Big(\Big(\sum_{h \in
E}[\overline{\varphi_{ij}(h)}] \otimes \overline{\lambda(h)\xi}\Big)
\otimes \xi \Big)\Big\|_{M_n(A(G))} \\
&\le& \Big\|\Big(\sum_{h \in E}[\overline{\varphi_{ij}(h)}] \otimes
\overline{\lambda(h)\xi}\Big) \otimes \xi \Big\|_{M_n \otimes
\overline{L^2(G)_r} \widehat{\otimes}L^2(G)_c} \\
&\le& \Big\|\sum_{h \in E}[\overline{\varphi_{ij}(h)}] \otimes
\overline{\lambda(h)\xi}\Big\|_{M_n(\overline{L^2(G)_r})}
\|\xi\|_{M_1(L^2(G)_c)} \\
&=& \Big\|\sum_{h \in E}[\varphi_{ij}(h)] \otimes
\lambda(h)\xi\Big\|_{M_n(L^2(G)_r)} \\
&=& \Big\| \sum_{h \in E}
[\varphi_{ij}(h)][\varphi_{ij}(h)]^*\Big\|^{1/2} \\
&=& \|[\varphi_{ij}]\|_{M_n(\ell^2(E)_r)} = \|[\overline{\varphi_{ij}}]\|_{M_n(\overline{\ell^2(E)_r})}. 
\end{eqnarray*}
Since $n \in \mathbb N$ and $[\overline{\varphi_{ij}}] \in
M_n(\overline{\ell^2(E)_r})$ were arbitrary,
$\Gamma_u:\overline{\ell^2(E)_r} \to A(G)$ is completely
contractive. 
\end{itemize}  \end{proof}

\noindent \textbf{Remark:}  Let $E \subseteq G$ and $u \in P(G) \cap C_c(G)$ be as above.  In addition, suppose that the family $\{\lambda(h^{-1})\xi\}_{h \in E}$ forms an ONS in $L^2(G)$.  In this case, define a new map $\check{\Gamma}_u:\ell^2(E) \to C_0(G) \cap
L^2(G)$ by setting
\[
\check{\Gamma}_u \varphi = \sum_{h \in E} \varphi(h) (u* \delta_{h^{-1}}).
\]
Then we obtain the following analogue of Theorem \ref{embedding_l2}:

\begin{theorem} \label{embedding_l2_column}
The map $\check{\Gamma}_u$ satisfies the following properties: 
\begin{itemize} 
\item[i)] $(\check{\Gamma}_u \varphi) \big|_E = \varphi$ for all $\varphi \in
\ell^2(E).$ 
\item[ii)] $\check{\Gamma}_u(\ell^2(E)) \subseteq A(G)$ and the map
$\check{\Gamma}_u:\ell^2(E) \to A(G)$ is a contraction. 
\item[iii)]$\check{\Gamma}_u: \ell^2(E)_c \to A(G)$ is a complete contraction. 
\end{itemize} 
\end{theorem}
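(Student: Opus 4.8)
The plan is to mirror the proof of Theorem \ref{embedding_l2} almost verbatim, with three systematic substitutions: left translations $\delta_h * u$ are replaced by right translations $u * \delta_{h^{-1}}$, the orthonormal system $\{\lambda(h)\xi\}_{h\in E}$ of Lemma \ref{ONS} is replaced throughout by the hypothesized orthonormal system $\{\lambda(h^{-1})\xi\}_{h\in E}$, and consequently the conjugate row structure $\overline{\ell^2(E)_r}$ is replaced by the column structure $\ell^2(E)_c$. The reason the column structure appears is that, after the substitution, the variable $\varphi$ will sit in the \emph{first} (un-conjugated) slot of a coefficient function $\lambda_{\eta,\xi}$ rather than in the second slot; everything else is parallel.

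First I would record the key kernel identity for parts i) and ii). Writing $(u*\delta_{h^{-1}})(x) = u(xh^{-1}) = \langle \lambda(x)\lambda(h^{-1})\xi \mid \xi\rangle$, one obtains
\[
(\check{\Gamma}_u\varphi)(x) = \Big\langle \lambda(x)\,\eta \,\Big|\, \xi\Big\rangle, \qquad \eta := \sum_{h\in E}\varphi(h)\,\lambda(h^{-1})\xi,
\]
so that $\check{\Gamma}_u\varphi = \lambda_{\eta,\xi} \in A(G)$. For part ii) this immediately gives $\|\check{\Gamma}_u\varphi\|_{A(G)} \le \|\eta\|_2\|\xi\|_2 = \|\varphi\|_2$, where $\|\eta\|_2 = \|\varphi\|_2$ because $\{\lambda(h^{-1})\xi\}_{h\in E}$ is orthonormal. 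For part i), evaluating at $s\in E$ reduces matters to showing $u(sh^{-1}) = \delta_{s,h}$ for $s,h\in E$; using the positive-definite symmetry $u(x^{-1}) = \overline{u(x)}$ one rewrites $u(sh^{-1}) = \langle \lambda(h^{-1})\xi \mid \lambda(s^{-1})\xi\rangle$, which equals $\delta_{s,h}$ precisely by the orthonormality hypothesis. (Note this genuinely uses the hypothesis on $\{\lambda(h^{-1})\xi\}$: the right-hand difference $sh^{-1}$ need not land in $\mathcal U$ under the left uniform discreteness of $E$.)

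For part iii) I would reuse the complete quotient map $q:\overline{L^2(G)_r}\widehat{\otimes}L^2(G)_c \to A(G)$, $q(\overline{\beta}\otimes\alpha) = \overline{\beta}*\check{\alpha} = \lambda_{\alpha,\beta}$, from the proof of Theorem \ref{embedding_l2}. Since $\check{\Gamma}_u\varphi = \lambda_{\eta,\xi} = q(\overline{\xi}\otimes\eta)$, the matrix amplification factors as
\[
\check{\Gamma}_u^{(n)}([\varphi_{ij}]) = (I_n\otimes q)\Big(\overline{\xi}\otimes\sum_{h\in E}[\varphi_{ij}(h)]\otimes\lambda(h^{-1})\xi\Big),
\]
now with the fixed vector $\overline{\xi}$ in the conjugate row slot and the variable part in the column slot. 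Complete contractivity of $q$ together with submultiplicativity of the projective tensor norm bounds this by $\|\overline{\xi}\|\cdot\big\|\sum_h [\varphi_{ij}(h)]\otimes\lambda(h^{-1})\xi\big\|_{M_n(L^2(G)_c)}$, and the orthonormality of $\{\lambda(h^{-1})\xi\}$ makes $\varphi\mapsto\eta$ a complete isometry $\ell^2(E)_c \to L^2(G)_c$; the relevant column Gram computation then yields $\big\|\sum_h [\varphi_{ij}(h)]^*[\varphi_{ij}(h)]\big\|^{1/2} = \|[\varphi_{ij}]\|_{M_n(\ell^2(E)_c)}$, giving the complete contraction.

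The only genuinely delicate point is bookkeeping rather than any new idea: one must track which Hilbert-space slot is conjugate-row and which is column, and verify that the switch from left to right translation moves the $\varphi$-dependence from the second to the first argument of the coefficient function — this is exactly what converts $\overline{\ell^2(E)_r}$ into $\ell^2(E)_c$ and replaces the Gram expression $\sum_h[\varphi_{ij}(h)][\varphi_{ij}(h)]^*$ by $\sum_h[\varphi_{ij}(h)]^*[\varphi_{ij}(h)]$. I would also take care that the orthonormality hypothesis on $\{\lambda(h^{-1})\xi\}$ (equivalently, uniform discreteness of $E^{-1}$) is invoked precisely where Lemma \ref{ONS} was used in Theorem \ref{embedding_l2}, since the left uniform discreteness of $E$ alone does not control the right-hand differences $sh^{-1}$.
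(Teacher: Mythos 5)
Your proposal is correct and follows essentially the same route as the paper: the paper likewise treats parts i) and ii) by mirroring Theorem \ref{embedding_l2} with the ONS hypothesis on $\{\lambda(h^{-1})\xi\}_{h\in E}$, and for part iii) uses exactly your factorization $\check{\Gamma}_u^{(n)}[\varphi_{ij}] = (I_n\otimes q)\bigl(\overline{\xi}\otimes\sum_{h\in E}[\varphi_{ij}(h)]\otimes\lambda(h^{-1})\xi\bigr)$ through the complete quotient map $q$, followed by the column Gram computation. Your remark that left uniform discreteness alone does not control the differences $sh^{-1}$, so the ONS hypothesis is genuinely needed, correctly identifies why the paper adds that assumption in the preceding remark.
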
  

\begin{proof}
\begin{itemize} 
\item[i)] This is proved in a similar way to Theorem \ref{embedding_l2} Part $i)$, using the fact that $\{\lambda(h^{-1})\xi\}_{h \in E}$ forms an ONS in $L^2(G)$. 
\item[ii)]  Similar to the proof of Theorem \ref{embedding_l2} Part ii).  
\item[iii)]  We proceed as in the proof of Theorem \ref{embedding_l2}.  Let $n \in \mathbb N$ and let $[\varphi_{ij}] \in M_n(\ell^2(E)_c)$.  Then we have
\begin{eqnarray*}
\|\check{\Gamma}_u^{(n)}[\varphi_{ij}]\|_{M_n(A(G))} &=& \Big\|(I_n \otimes q)\Big(\overline{\xi} \otimes \Big(\sum_{h \in E}[\varphi_{ij}(h)] \otimes \lambda(h^{-1})\xi\Big) \Big) \Big\|_{M_n(A(G))} \\
&\le& \Big\|(\overline{\xi} \otimes \Big(\sum_{h \in E}[\varphi_{ij}(h)] \otimes \lambda(h^{-1})\xi\Big) \Big\|_{M_n \otimes \overline{L^2(G)_r} \widehat{\otimes} L^2(G)_c} \\
&\le&  \|\overline{\xi}\|_{\overline{L^2(G)_r}}\Big\| \sum_{h \in E}[\varphi_{ij}(h)] \otimes \lambda(h^{-1})\xi \Big\|_{M_n \otimes L^2(G)_c} \\
&=& \|[\varphi_{ij}]\|_{M_n(\ell^2(E)_c)}.
\end{eqnarray*}   
\end{itemize}     
\end{proof}

Given $T \in VN(G)$ and $v \in A(G)$, recall (see \cite{Eymard}) that we
can define a $VN(G)$-action on $A(G)$, denoted by $Tv$, by letting 
\begin{eqnarray*}
Tv(x) = \langle \delta_x *\check{v}, T \rangle \qquad (x \in G),
\end{eqnarray*}
where $\check{v}(x) = v(x^{-1})$ for any function $v:G \to \mathbb C$.  (Note that from \cite{Eymard} we have that $\|\check{v}\|_{A(G)} = \|v\|_{A(G)}$.)
It is not hard to see that when $v \in A(G) \cap L^2(G)$, $Tv$ is
nothing other than the image of the vector $v \in L^2(G)$ under the linear
operator $T \in VN(G) \subseteq \mathcal B (L^2(G))$.

Using the above module notation, we have the following corollary to
Theorem \ref{embedding_l2}:

\begin{corollary} \label{embedding_VN_l2}
Let $E \subseteq G$ be a uniformly discrete subset of $G$, let $u
\in P(G) \cap C_c(G)$ be as in Theorem \ref{embedding_l2}, and let
$n \in \mathbb N$.  Then for any matrix $[T_{ij}] \in M_n(VN(G))$ we
have
\begin{eqnarray*}
\Big\| \sum_{h \in E} [T_{ij}\check{u}(h)]^*[T_{ij}\check{u}(h)]
\Big\|^{1/2} \le \|[T_{ij}]\|_{M_n(VN(G))}.
\end{eqnarray*}
Suppose furthermore that the function $u$ satisfies 
\begin{eqnarray*}
u(h^{-1}xh) = u(x) \qquad (x \in G, \ h \in E).
\end{eqnarray*}
Then we have
\begin{eqnarray*}
\max \Big\{ \Big\| \sum_{h \in E}
[T_{ij}\check{u}(h)]^*[T_{ij}\check{u}(h)] \Big\|^{1/2}, \ \Big\|
\sum_{h \in E} [T_{ij}\check{u}(h)][T_{ij}\check{u}(h)]^* \Big\|^{1/2} \Big\} \le \|[T_{ij}]\|.
\end{eqnarray*}
\end{corollary}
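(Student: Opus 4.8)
The plan is to obtain both inequalities by dualizing the complete contractions built in Theorem \ref{embedding_l2} and Theorem \ref{embedding_l2_column} and then reading off the resulting amplified estimates as the very definitions of the norms on the column and row Hilbert spaces over $\ell^2(E)$. The first thing I would record is the elementary identity linking the two objects in play: using the $VN(G)$-module action on $A(G)$ together with $\check{\check u}=u$, one checks that for every $T\in VN(G)$ and $h\in E$,
\[
\langle \delta_h*u,\,T\rangle = T\check u(h).
\]
Pairing $T$ against $\Gamma_u\varphi=\sum_{h\in E}\varphi(h)(\delta_h*u)$ then gives $\langle \Gamma_u\varphi,T\rangle=\sum_{h\in E}\varphi(h)\,T\check u(h)$, which identifies the adjoint $\Gamma_u^{*}$, up to an irrelevant entrywise conjugation, with the single map $\Psi\colon VN(G)\to\ell^2(E)$, $\Psi(T)=(T\check u(h))_{h\in E}$, that will produce both sides of the claim.

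For the first inequality I would invoke Theorem \ref{embedding_l2}, which supplies the complete contraction $\Gamma_u\colon\overline{\ell^2(E)_r}\to A(G)$; this map is always available, since $\{\lambda(h)\xi\}_{h\in E}$ is orthonormal by Lemma \ref{ONS}. Taking adjoints and using the completely isometric identifications $A(G)^{*}=VN(G)$ and $(\overline{\ell^2(E)_r})^{*}=\ell^2(E)_c$ from the preliminaries, the adjoint $\Gamma_u^{*}=\Psi\colon VN(G)\to\ell^2(E)_c$ is again a complete contraction. Amplifying to $[T_{ij}]\in M_n(VN(G))$ and unwinding the norm on $M_n(\ell^2(E)_c)$ — for a matrix $[\zeta_{ij}]$ of $\ell^2(E)$-vectors its square is the operator norm of the scalar matrix with $(j,l)$ entry $\sum_k\langle\zeta_{kl} | \zeta_{kj}\rangle$ — one sees that $\|\Psi^{(n)}[T_{ij}]\|_{M_n(\ell^2(E)_c)}^2$ is exactly $\big\|\sum_{h\in E}[T_{ij}\check u(h)]^{*}[T_{ij}\check u(h)]\big\|$, so complete contractivity of $\Psi$ yields the first bound. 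Since conjugating the entries of such a column matrix only transposes the resulting Gram matrix, the conjugation ambiguity in identifying $\Gamma_u^{*}$ with $\Psi$ does not affect this norm.

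For the second inequality I would, under the extra hypothesis $u(h^{-1}xh)=u(x)$, rerun the argument with $\check\Gamma_u$ in place of $\Gamma_u$, which requires two checks. First, the hypothesis gives $u(h^{-1}x)=u(xh^{-1})$, i.e.\ $\delta_h*u=u*\delta_{h^{-1}}$, so $\check\Gamma_u$ coincides with $\Gamma_u$ as a map and its adjoint again computes $\Psi$. Second, $\check\Gamma_u$ needs $\{\lambda(h^{-1})\xi\}_{h\in E}$ to be orthonormal; since $\langle\lambda(h_1^{-1})\xi \mid \lambda(h_2^{-1})\xi\rangle=u(h_2h_1^{-1})$ and conjugation invariance together with Lemma \ref{ONS} forces $u(h_2h_1^{-1})=u(h_1^{-1}h_2)=0$ for $h_1\neq h_2$, this holds. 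Granting these, Theorem \ref{embedding_l2_column} gives the complete contraction $\check\Gamma_u\colon\ell^2(E)_c\to A(G)$; dualizing via $(\ell^2(E)_c)^{*}=\overline{\ell^2(E)_r}$ produces a complete contraction $VN(G)\to\overline{\ell^2(E)_r}$ whose amplified norm is the row expression $\big\|\sum_{h\in E}[T_{ij}\check u(h)][T_{ij}\check u(h)]^{*}\big\|^{1/2}$. Combining with the first bound and taking the maximum finishes the proof.

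I expect the main obstacle to be bookkeeping rather than conceptual: correctly tracking the row/column operator space structures through the chain of duality identifications, and confirming that the two concrete matrix expressions in the statement are precisely the $M_n(\ell^2(E)_c)$ and $M_n(\ell^2(E)_r)$ norms of $\Psi^{(n)}[T_{ij}]$. The one genuinely substantive point is the pair of verifications in the second part — that conjugation invariance both identifies $\check\Gamma_u$ with $\Gamma_u$ and guarantees orthonormality of $\{\lambda(h^{-1})\xi\}_{h\in E}$ — for without this hypothesis only the column bound survives.
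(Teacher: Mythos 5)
Your proof is correct. For the first inequality your route is essentially the paper's: the paper writes the column quantity $A(E,T)$ as a supremum of matrix pairings over the unit ball of $M_n(\overline{\ell^2(E)_r})$ and invokes the complete contractivity of $\Gamma_u$ from Theorem \ref{embedding_l2}; passing to the adjoint map $\Psi=\Gamma_u^{*}\colon VN(G)\to\ell^2(E)_c$ and amplifying, as you do, is the same duality argument in different packaging. For the second inequality, however, you genuinely diverge. The paper does not invoke Theorem \ref{embedding_l2_column} at all: it uses $u*\delta_h=\delta_{h^{-1}}*u$ to establish the entrywise identity $[T_{ij}\check u(h)]^{*}=[T_{ji}^{*}\check u(h^{-1})]$, concludes that the row quantity $B(E,T)$ equals the column quantity $A(E^{-1},T^{*})$ for the inverted set $E^{-1}$ (which the hypothesis on $u$ forces to be uniformly discrete), and then reruns the first argument for an auxiliary map $\tilde\Gamma_u$ on $\overline{\ell^2(E^{-1})_r}$. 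You instead verify the two hypotheses needed to apply Theorem \ref{embedding_l2_column} directly --- that conjugation invariance makes $\{\lambda(h^{-1})\xi\}_{h\in E}$ orthonormal (via $u(h_2h_1^{-1})=u(h_1^{-1}h_2)=0$ for $h_1\neq h_2$), and that $\delta_h*u=u*\delta_{h^{-1}}$ makes $\check\Gamma_u=\Gamma_u$ as maps, so its adjoint is still $T\mapsto (T\check u(h))_{h\in E}$ --- and then dualize $\check\Gamma_u\colon\ell^2(E)_c\to A(G)$ into $\overline{\ell^2(E)_r}$ to read off the row expression. Both arguments rest on the same algebraic consequence of $\delta_h*u*\delta_h=u$, and your conjugation bookkeeping is harmless since entrywise conjugation preserves the norm of a scalar matrix. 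Your version has the merit of actually putting Theorem \ref{embedding_l2_column} to work (the paper states it but leaves it idle in this proof) and of avoiding the reindexing through $E^{-1}$, $T^{*}$ and $\tilde\Gamma_u$; the paper's transposition trick, conversely, gets by with only the row-type embedding of Theorem \ref{embedding_l2} and makes explicit the uniform discreteness of $E^{-1}$, which is silently subsumed in your ONS verification.
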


\begin{proof} Let $T = [T_{ij}] \in M_n(VN(G))$, and let us consider the
quantity $$A(E,T): = \Big\| \sum_{h \in E}
[T_{ij}\check{u}(h)]^*[T_{ij}\check{u}(h)] \Big\|^{1/2}.$$ By the
very definition of the operator space $\ell^2(E)_c$, we have $$A(E,T) =
\|[T_{ij}\check{u}\big|_E]\|_{M_n(\ell^2(E)_c)}.$$  Since
$\ell^2(E)_c^* = \overline{\ell^2(E)_r}$ completely isometrically,
we have
\begin{eqnarray*}
A(E,T) &=& \sup_{[\overline{\varphi_{kl}}] \in
b_1(M_n(\overline{\ell^2(E)_r}))} \| \langle \langle
[T_{ij}\check{u}\big|_E], [\overline{\varphi_{kl}}]  \rangle
\rangle \| \\
&=& \sup_{[\overline{\varphi_{kl}}] \in
b_1(M_n(\overline{\ell^2(E)_r}))} \|[\langle T_{ij}\check{u}\big|_E,
\overline{\varphi_{kl}} \rangle]\|_{M_{n^2}} \\
&=& \sup_{[\overline{\varphi_{kl}}] \in
b_1(M_n(\overline{\ell^2(E)_r}))} \Big\|\Big[\sum_{h \in E}
T_{ij}\check{u}(h)\overline{\varphi_{kl}(h)}\Big]\Big\|_{M_{n^2}} \\
&=& \sup_{[\overline{\varphi_{kl}}] \in
b_1(M_n(\overline{\ell^2(E)_r}))} \Big\|\Big[\sum_{h \in E}
\overline{\varphi_{kl}(h)} \langle \delta_h*u, T_{ij} \rangle
\Big]\Big\|_{M_{n^2}} \\
&=& \sup_{[\overline{\varphi_{kl}}] \in
b_1(M_n(\overline{\ell^2(E)_r}))} \Big\|\Big \langle \Big \langle
\sum_{h \in E} [\overline{\varphi_{kl}(h)}] \otimes (\delta_h*u),
[T_{ij}] \Big \rangle
\Big \rangle \Big\|_{M_{n^2}} \\
&=& \sup_{[\overline{\varphi_{kl}}] \in
b_1(M_n(\overline{\ell^2(E)_r}))} \|\langle \langle
\Gamma_u^{(n)}[\overline{\varphi_{kl}}], [T_{ij}] \rangle
\rangle\| \\
&\le& \sup_{[\overline{\varphi_{kl}}] \in
b_1(M_n(\overline{\ell^2(E)_r}))}
\|\Gamma_u^{(n)}[\overline{\varphi_{kl}}]\|_{M_n(A(G))}
\|[T_{ij}]\|_{M_n(VN(G))} \\
&\le& \|[T_{ij}]\|_{M_n(VN(G))} \ \ \ \ (\textrm{by Theorem
\ref{embedding_l2} Part (3)}).
\end{eqnarray*}
Now suppose that $\delta_h*u*\delta_{h} = u$ for all $h \in E$.  Let
$[T_{ij}] \in M_n(VN(G))$ and consider the quantity $$B(E,T): =
\Big\|
\sum_{h \in E} [T_{ij}\check{u}(h)][T_{ij}\check{u}(h)]^* \Big\|^{1/2} .$$  We want to show that $$\max \{A(E,T), B(E,T)\} \le \|[T_{ij}]\|_{M_n(VN(G))}.$$   

Observe that for any $h \in E$, the fact that $u*\delta_h = \delta_{h^{-1}}*u$ implies that 
\begin{eqnarray*}
[T_{ij}\check{u}(h)]^* &=& [\overline{T_{ji}\check{u}(h)}]= [\overline{\langle u, \lambda(h^{-1})T_{ji} \rangle}]= [\langle u, T_{ji}^* \lambda(h) \rangle] \\
&=& [\langle u*\delta_h, T_{ji}^* \rangle]= [\langle \delta_{h^{-1}}*u, T_{ji}^* \rangle] \\
&=& [T_{ji}^*\check{u}(h^{-1})], 
\end{eqnarray*}
and consequently,
\begin{eqnarray*}
B(E,T)&=& \Big\| \sum_{h \in E} [T_{ji}^*\check{u}(h^{-1})]^*[T_{ji}^*\check{u}(h^{-1})] \Big\| = A({E^{-1},T^*}).
\end{eqnarray*}
Note that the condition on our function $u$ forces $E^{-1}$ to be uniformly discrete in $G$.  Furthermore we can apply Theorem \ref{embedding_l2} to the set $E^{-1}$ and the map $\tilde{\Gamma}_u:\overline{\ell^2(E^{-1})_r}) \to A(G)$ given by $\tilde{\Gamma}_u\varphi = \sum_{h \in E} \varphi(h)(\delta_{h^{-1}}*u)$ to deduce that $\|\tilde{\Gamma}_u\|_{cb} \le 1$.  This allows us to use the same argument that was used to bound $A(E,T)$ to get $A(E^{-1},T) \le \|[T_{ij}]^*\|_{M_n(VN(G))} = \|[T_{ij}]\|_{M_n(VN(G))}$.  The proof is now complete. 
\end{proof}

If we restrict our attention to discrete groups, Corollary \ref{embedding_VN_l2} can be viewed as a generalization of the well known fact that $VN(G)$ embeds completely contractively into both $\ell^2(E)_c$ and $\ell^2(E)_r$.  (See \cite{Pisier_Book} for a proof of this in the discrete case.)

\begin{corollary} \label{embedding_VN_l2_discrete}
Let $G$ be any discrete group and let $E \subseteq G$.  Then for any Hilbert space $\mathcal H$ and any finitely supported function $a:E \to \mathcal {B(H})$, we have  
\begin{eqnarray*}
\max\Big\{\Big\|\sum_{h \in E}a(h)^*a(h)\Big\|^{1/2},
\Big\|\sum_{h \in E}a(h)a(h)^*\Big\|^{1/2} \Big\} \le 
\Big\|\sum_{h \in E} a(h) \otimes \lambda(h)\Big\|_{\mathcal
{B(H)} \otimes_{min} VN(G)}.
\end{eqnarray*}
\end{corollary}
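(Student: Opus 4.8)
The plan is to read the inequality off from Corollary \ref{embedding_VN_l2} by specializing to the canonical discrete setup and then choosing the coefficient matrices cleverly. Since $G$ is discrete, every $E\subseteq G$ is uniformly discrete with witness $\mathcal U=\{e\}$, and we may take $\mathcal V=\{e\}$, so that $\xi=\delta_e$ and the associated function is $u=\overline{\xi}*\check\xi=\delta_e=1_{\{e\}}\in P(G)\cap C_c(G)$. This $u$ is central, i.e. $u(h^{-1}xh)=u(x)$ for all $x,h$, so the stronger two-sided (``max'') conclusion of Corollary \ref{embedding_VN_l2} is automatically available here. I would first settle the case $\mathcal H=\mathbb C^n$ and then reduce an arbitrary Hilbert space to it.

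The heart of the matter is a bookkeeping identity. Given a finitely supported $a:E\to M_n=\mathcal B(\mathbb C^n)$, set $[T_{ij}]:=\sum_{h\in E}a(h)\otimes\lambda(h)\in M_n(VN(G))$, so that $T_{ij}=\sum_{h}a(h)_{ij}\,\lambda(h)$. Unwinding the module action $Tv(x)=\langle \delta_x*\check v,T\rangle$ for $v=\check u$ and $u=\delta_e$ (so $\check u=u$ and $\delta_x*u=\delta_x$), together with the duality $\langle \delta_x,T\rangle=(T\delta_e)(x)$ coming from $\delta_x=\lambda_{\delta_e,\delta_x}=\overline{\delta_x}*\check{\delta_e}$ and the stated pairing convention, I compute $T_{ij}\check u(h)=(T_{ij}\delta_e)(h)=a(h)_{ij}$ for $h\in E$. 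Hence $[T_{ij}\check u(h)]_{ij}=a(h)$, so that $\sum_h[T_{ij}\check u(h)]^*[T_{ij}\check u(h)]=\sum_h a(h)^*a(h)$ and $\sum_h[T_{ij}\check u(h)][T_{ij}\check u(h)]^*=\sum_h a(h)a(h)^*$. Since $M_n(VN(G))=\mathcal B(\mathbb C^n)\otimes_{\min}VN(G)$ carries the unique $C^*$-norm, we have $\|[T_{ij}]\|_{M_n(VN(G))}=\|\sum_h a(h)\otimes\lambda(h)\|_{\mathcal B(\mathbb C^n)\otimes_{\min}VN(G)}$, and Corollary \ref{embedding_VN_l2} delivers both inequalities at once for $\mathcal H=\mathbb C^n$.

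To pass to an arbitrary $\mathcal H$, I would invoke injectivity of the minimal tensor product under compression. Fix a unit vector $\zeta\in\mathcal H$, let $P$ be the orthogonal projection onto the finite-dimensional subspace spanned by $\{\zeta\}\cup\{a(h)\zeta,\,a(h)^*\zeta:h\in\operatorname{supp}a\}$, and put $b(h)=Pa(h)P$. Then $\|\sum_h b(h)\otimes\lambda(h)\|\le\|\sum_h a(h)\otimes\lambda(h)\|$, while $\langle(\sum_h b(h)^*b(h))\zeta\mid\zeta\rangle=\sum_h\|a(h)\zeta\|^2=\langle(\sum_h a(h)^*a(h))\zeta\mid\zeta\rangle$, and symmetrically for the row quantity. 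Applying the finite-dimensional case to $b$ and taking the supremum over unit vectors $\zeta$, using positivity of $\sum_h a(h)^*a(h)$ and $\sum_h a(h)a(h)^*$, yields the stated inequality for all $\mathcal H$.

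The step I expect to cause the most trouble is the identity $T_{ij}\check u(h)=a(h)_{ij}$: it forces one to track the module action, the normalization $\check u=u$, and the $A(G)$--$VN(G)$ duality convention simultaneously, and it is the only place where the special choice $u=\delta_e$ is genuinely used. As a sanity check, the same conclusion follows directly: applying $S=\sum_h a(h)\otimes\lambda(h)$ and its adjoint $S^*=\sum_h a(h)^*\otimes\lambda(h^{-1})$ to $\zeta\otimes\delta_e$ and using that both $\{\delta_h\}_{h\in E}$ and $\{\delta_{h^{-1}}\}_{h\in E}$ are orthonormal (Lemma \ref{ONS}) gives $\|S(\zeta\otimes\delta_e)\|^2=\langle(\sum_h a(h)^*a(h))\zeta\mid\zeta\rangle$ and $\|S^*(\zeta\otimes\delta_e)\|^2=\langle(\sum_h a(h)a(h)^*)\zeta\mid\zeta\rangle$, which reproduces both bounds for every $\mathcal H$ without any reduction.
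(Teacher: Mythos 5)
Your proposal is correct, and its main line is the same as the paper's: the paper's entire proof is ``apply Corollary \ref{embedding_VN_l2} to $E$ with $u=\delta_e$, then reduce the infinite-dimensional case to the finite-dimensional one by citing Pisier's book.'' What you add beyond the paper is the actual bookkeeping the paper leaves implicit --- the identity $T_{ij}\check{u}(h)=(T_{ij}\delta_e)(h)=a(h)_{ij}$, which is verified correctly against the module action and the stated duality convention, and the observation that $\delta_e$ satisfies the conjugation-invariance hypothesis needed for the two-sided bound --- together with an explicit compression argument (cutting down by the projection onto $\mathrm{span}\{\zeta, a(h)\zeta, a(h)^*\zeta\}$) in place of the citation for the finite-to-infinite-dimensional passage; both of these are sound, the latter resting on injectivity of $\otimes_{\min}$ under the embedding $\mathcal B(P\mathcal H)\cong P\mathcal B(\mathcal H)P\subseteq\mathcal B(\mathcal H)$. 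The one genuinely different route is your closing ``sanity check,'' which deserves to be promoted to the proof itself: applying $S=\sum_h a(h)\otimes\lambda(h)$ and $S^*$ to the unit vector $\zeta\otimes\delta_e$ and using orthonormality of $\{\delta_h\}_{h\in E}$ and $\{\delta_{h^{-1}}\}_{h\in E}$ gives $\bigl\langle\bigl(\sum_h a(h)^*a(h)\bigr)\zeta\,\big|\,\zeta\bigr\rangle\le\|S\|^2$ and $\bigl\langle\bigl(\sum_h a(h)a(h)^*\bigr)\zeta\,\big|\,\zeta\bigr\rangle\le\|S\|^2$ for every $\zeta$, which is the full statement for arbitrary $\mathcal H$ with no dimension reduction, no module-action bookkeeping, and no appeal to Corollary \ref{embedding_VN_l2} at all. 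That elementary argument is cleaner and more self-contained than either the paper's proof or your primary route; the only thing the paper's route buys is that it exhibits the corollary as a literal specialization of the locally compact machinery (Theorem \ref{embedding_l2} and Corollary \ref{embedding_VN_l2}), which is the rhetorical point the authors wanted to make.
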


\begin{proof}
Applying Corollary \ref{embedding_VN_l2} to the set $E$ and the function $u = \delta_e$, we obtain the above result for any finite dimensional Hilbert space $\mathcal H$.  The finite dimensional case, however, is equivalent to the infinite dimensional case (see \cite{Pisier_Book} Chapter 2).   
\end{proof}

\subsection{Leinert Sets and Strong Leinert Sets in Discrete Groups}

Recall that if $X$ is any set, then the Hilbert sequence space
$\ell^2(X)$ is a Banach algebra under pointwise multiplication.

\begin{definition}
If $G$ is a discrete group and $E \subseteq G$, then $E$ is
called a \textbf{Leinert set} if $A_G(E) \cong \ell^2(E)$
isomorphically as Banach algebras.
\end{definition}

Recall again that for any discrete group $G$, $VN(G)$ embeds into
$\ell^2(G)$ by identifying $VN(G)$ with those functions $f \in
\ell^2(G)$ for which the left convolution operator
$\lambda(f):\ell^2(G) \to \ell^2(G)$ is bounded.  By duality, it can
be shown that a subset $E \subseteq G$ is a Leinert set if and only
the annihilator of the ideal $I(E) \subseteq A(G)$, $$I(E)^\perp =
\{\lambda(f) \in VN(G): \ \textrm{supp}f \subseteq E\},$$ is precisely all of $\ell^2(E)$  
(See \cite{Picardello}).

\begin{definition}\label{Leinert Property} 
Let $G$ be a discrete group. A set $E \subseteq G$  satisfies the Leinert condition if for all $n\in \mathbb{N}$ and 
for all $\{ x_i\}_{i=1}^{2n} \subseteq E$  
with $x_i \not = x_{i+1}$, we have that $x_1x_2^{-1}x_3x_4^{-1}\cdots x_{2n-1}x_{2n}^{-1}\not = e$.
\end{definition}

In \cite{Leinert}, Leinert showed that every set $E\subseteq G$ satisfying the Leinert condition is in fact a Leinert set. 

The following lemma, due to Bo$\dot{z}$ejko (\cite{Bozejko}), characterizes
Leinert sets in terms of the multiplier algebra $MA(G)$.

\begin{lemma}
Let $G$ be a discrete group and $E \subseteq G$.  Then
TFAE: \\ \\
$(1).$ $E$ is a Leinert set. \\
$(2).$ Every function in $\ell^\infty(E)$ belongs to $MA(G)$.
\end{lemma}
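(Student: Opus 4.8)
The plan is to prove both implications by passing to the dual/quotient picture. Recall from the discussion preceding the statement (see \cite{Picardello}) that $E$ is a Leinert set precisely when $I(E)^\perp=\{\lambda(f):\mathrm{supp}\,f\subseteq E\}$ coincides with $\ell^2(E)$; since $\|f\|_2\le\|\lambda(f)\|_{VN(G)}$ always holds, this is equivalent to the existence of a constant $C$ with $\|\lambda(f)\|_{VN(G)}\le C\|f\|_2$ for every finitely supported $f$ on $E$, and, by taking adjoints, to the boundedness of the restriction map $R\colon A(G)\to\ell^2(E)$, $R(u)=u\big|_E$. I will therefore work with the reformulation: $E$ is Leinert iff there is $C$ with $\|u\big|_E\|_2\le C\|u\|_{A(G)}$ for all $u\in A(G)$.

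For the implication $(1)\Rightarrow(2)$ I would argue directly. Assume $E$ is Leinert, so $\|u\big|_E\|_2\le C\|u\|_{A(G)}$ for all $u$. Fix $\phi\in\ell^\infty(E)$, extended by $0$ off $E$, and let $u\in A(G)$. Then $\phi u$ is supported on $E$ and, as a function on $E$, equals $\phi\cdot(u\big|_E)$, so $\|\phi u\|_{\ell^2(E)}\le\|\phi\|_\infty\|u\big|_E\|_2\le C\|\phi\|_\infty\|u\|_{A(G)}$. Using the contractive embedding $\ell^2(E)\hookrightarrow A(G)$ recorded in the introduction, it follows that $\phi u\in A(G)$ with $\|\phi u\|_{A(G)}\le\|\phi u\|_{\ell^2(E)}\le C\|\phi\|_\infty\|u\|_{A(G)}$. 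Hence $M_\phi$ is bounded and $\phi\in MA(G)$.

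The substance of the lemma is the converse $(2)\Rightarrow(1)$. First I would promote the hypothesis to a uniform estimate: the inclusion $j\colon\ell^\infty(E)\to MA(G)$, $\phi\mapsto M_\phi$, has closed graph (if $\phi_n\to\phi$ uniformly and $M_{\phi_n}\to T$ in operator norm, then evaluating $M_{\phi_n}u=\phi_n u$ pointwise against the continuous point evaluations on $A(G)$ shows $Tu=\phi u$), so by the Closed Graph Theorem there is $C$ with $\|M_\phi\|\le C\|\phi\|_\infty$ for all $\phi\in\ell^\infty(E)$. Now fix $u\in A(G)$ and set $z_x:=u(x)\delta_x\in A(G)$ for $x\in E$, so that $\|z_x\|_{A(G)}=|u(x)|$. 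For any finite $F\subseteq E$ and any signs $\varepsilon\in\{\pm1\}^F$, the function $\varepsilon 1_F$ lies in $\ell^\infty(E)$ with $\|\varepsilon 1_F\|_\infty\le1$, and $\sum_{x\in F}\varepsilon_x z_x=M_{\varepsilon 1_F}u$; hence $\bigl\|\sum_{x\in F}\varepsilon_x z_x\bigr\|_{A(G)}\le C\|u\|_{A(G)}$ uniformly in $\varepsilon$ and $F$.

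The final step is to invoke the geometry of $A(G)$. Since $G$ is discrete, $VN(G)$ is a finite von Neumann algebra and $A(G)=VN(G)_*$ is a tracial noncommutative $L^1$-space, which has cotype $2$ (see \cite{Pisier_Book}). Applying the cotype $2$ inequality to the vectors $(z_x)_{x\in F}$ and using the uniform sign bound above gives $\bigl(\sum_{x\in F}|u(x)|^2\bigr)^{1/2}=\bigl(\sum_{x\in F}\|z_x\|_{A(G)}^2\bigr)^{1/2}\le C_2\bigl(\mathbb{E}_\varepsilon\bigl\|\sum_{x\in F}\varepsilon_x z_x\bigr\|_{A(G)}^2\bigr)^{1/2}\le C_2 C\|u\|_{A(G)}$. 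Letting $F\uparrow E$ yields $\|u\big|_E\|_2\le C_2 C\|u\|_{A(G)}$, i.e. $R$ is bounded into $\ell^2(E)$, which is the reformulated Leinert property. The main obstacle is precisely this last step: condition $(2)$ only controls the $A(G)$-to-$A(G)$ multiplier norm, which dominates the smaller $A(G)$-norm but not the $\ell^2$-norm of functions supported on $E$, so a genuine Banach-space input is needed to convert the uniform sign-average bound into a square-function bound, and cotype $2$ of the predual is exactly what supplies it.
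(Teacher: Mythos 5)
Your proof is correct, but note that the paper itself offers no proof of this lemma to compare against: it is quoted from Bo\.{z}ejko's paper \cite{Bozejko}. Your argument is, in substance, a reconstruction of Bo\.{z}ejko's original one: the easy direction via the contractive inclusion $\ell^2(E)\hookrightarrow A(G)$, and the hard direction $(2)\Rightarrow(1)$ via (i) a closed-graph upgrade of the hypothesis to a uniform bound $\|M_\phi\|\le C\|\phi\|_\infty$, (ii) the sign functions $\varepsilon 1_F$ acting on $u$ to give uniform control of $\bigl\|\sum_{x\in F}\varepsilon_x u(x)\delta_x\bigr\|_{A(G)}$, and (iii) cotype $2$ of the von Neumann algebra predual $A(G)=VN(G)_*$ to convert the sign-average bound into the square-function bound $\|u\big|_E\|_2\le C_2C\|u\|_{A(G)}$, which is the dual reformulation of the Leinert property recorded in the paper (via \cite{Picardello}). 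Each step checks out: the closed-graph argument is valid because both norm convergences imply pointwise convergence; $\|\delta_x\|_{A(G)}=1$ for discrete $G$; and the identification of $A_G(E)$ with $\ell^2(E)$ as function spaces is automatically an algebra isomorphism since both carry pointwise multiplication. Two small remarks on attribution and hypotheses: the cotype $2$ fact you invoke is due to Tomczak-Jaegermann (for Schatten classes and von Neumann preduals), and is not really what \cite{Pisier_Book} is about --- if you want a reference inside that book, the noncommutative Khintchine inequality in $L^1$ is the natural source; also, finiteness of $VN(G)$ (i.e.\ traciality) is not needed, since the predual of an arbitrary von Neumann algebra has cotype $2$, so your appeal to discreteness there can be dropped.
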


It follows from the above lemma that for any Leinert set $E$ in a
discrete group $G$, we have $$I(G \backslash E) = 1_E\cdot A(G) =
\ell^2(E).$$  Note however that the condition $I(G \backslash E) =
\ell^2(E)$ is not sufficient to ensure that $E$ is a Leinert set.  Indeed, it can
be shown that for \textit{any} discrete group $G$, there exists an infinite
set $E \subseteq G$ for which $I(G \backslash E) = \ell^2(E)$
(\cite{Picardello}).  

Clearly any finite subset of a discrete group is a Leinert set.  A typical example of an infinite Leinert set in the countably generated free group $\mathbb F_\infty$ on the generators $\{x_i\}_{i \in \mathbb N}$ is the set $E_n$ of all reduced words of length $n$ in $\mathbb F_\infty$.  For the Leinert set $E_1 = \{x_i, x_i^{-1}: \ i \in \mathbb N\}$, it can actually be shown that $\ell^\infty(E_1) \subseteq M_{cb}A(\mathbb F_\infty)$ \cite{Pisier}.  This example leads us to the next definition:  

\begin{definition}
Let $E$ be a subset of a discrete group $G$.  Then $E$ is called a
strong Leinert set if every function in $\ell^\infty(E)$ belongs to
$M_{cb}A(G)$.
\end{definition}

The following result, due to Pisier (\cite{Pisier} Proposition 3.2),
characterizes strong Leinert sets in terms of the operator space
structure of the subspace of $VN(G)$ consisting of those operators
supported on such sets.

\begin{proposition} \label{Pisier_prop}
Let $E$ be a subset of a discrete group $G$.  Then TFAE: \\ \\
$(1).$  $E$ is a strong Leinert set. \\
$(2).$  There exists some $C > 0$ such that for any Hilbert space
$\mathcal H$ and any finitely supported function $a:E \to \mathcal
{B(H)}$ we have
\begin{eqnarray*}
&&\Big\|\sum_{h \in E} a(h) \otimes \lambda(h)\Big\|_{\mathcal
{B(H)} \otimes_{min} VN(G)} \\
&\le&  C \max\Big\{\Big\|\sum_{h \in E}a(h)^*a(h)\Big\|^{1/2},
\Big\|\sum_{h \in E}a(h)a(h)^*\Big\|^{1/2} \Big\}.
\end{eqnarray*}
\end{proposition}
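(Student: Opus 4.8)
The plan is to recognise that both conditions describe the same operator-space phenomenon. Write $X_E=\overline{\operatorname{span}}\{\lambda(h):h\in E\}\subseteq VN(G)$. Corollary \ref{embedding_VN_l2_discrete} says precisely that the map $J\colon X_E\to \ell^2(E)_r\cap\ell^2(E)_c$ determined by $\lambda(h)\mapsto e_h$ is always a complete contraction: the left-hand side of condition $(2)$ is the norm of $\sum_{h}a(h)\otimes\lambda(h)$ in $\mathcal B(H)\otimes_{min}VN(G)$, while the right-hand side is the norm of the family $(a(h))_h$ in the row--column intersection $\ell^2(E)_r\cap\ell^2(E)_c$. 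Thus condition $(2)$ asserts exactly that $J^{-1}$ is completely bounded, i.e. that $X_E$ is completely isomorphic to $\ell^2(E)_r\cap\ell^2(E)_c$. So the proposition amounts to: $E$ is a strong Leinert set if and only if $X_E\cong \ell^2(E)_r\cap\ell^2(E)_c$ completely isomorphically. I would prove the two implications separately.

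For $(2)\Rightarrow(1)$ I would argue directly with Fourier multipliers. Fix $\phi\in\ell^\infty(E)$ and extend it by $0$ to $G$. Recall that $\|\phi\|_{M_{cb}A(G)}$ equals the completely bounded norm of the normal multiplier $m_\phi$ on $VN(G)$ determined by $\lambda(g)\mapsto\phi(g)\lambda(g)$, and that this $cb$-norm equals the supremum, over finitely supported $a\colon G\to\mathcal B(H)$ with $\|\sum_g a(g)\otimes\lambda(g)\|_{min}\le 1$, of $\|\sum_g\phi(g)a(g)\otimes\lambda(g)\|_{min}$. Since $\phi$ vanishes off $E$, the latter sum only involves $h\in E$; applying hypothesis $(2)$ to the family $h\mapsto\phi(h)a(h)$ together with $|\phi(h)|^2a(h)^*a(h)\le\|\phi\|_\infty^2 a(h)^*a(h)$ (and its row analogue) gives
\[
\Big\|\textstyle\sum_{h\in E}\phi(h)a(h)\otimes\lambda(h)\Big\|_{min}\le C\|\phi\|_\infty\max\Big\{\big\|\textstyle\sum_{h\in E}a(h)^*a(h)\big\|^{1/2},\ \big\|\textstyle\sum_{h\in E}a(h)a(h)^*\big\|^{1/2}\Big\}.
\]
Because $E\subseteq G$, the sums of positive operators satisfy $\sum_{h\in E}a(h)^*a(h)\le\sum_{g\in G}a(g)^*a(g)$ (and likewise for $aa^*$), so the maximum above is dominated by the corresponding quantity over all of $G$; by Corollary \ref{embedding_VN_l2_discrete} applied to the whole group, this in turn is at most $\|\sum_g a(g)\otimes\lambda(g)\|_{min}\le 1$. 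Hence $\|m_\phi\|_{cb}\le C\|\phi\|_\infty<\infty$, so $\phi\in M_{cb}A(G)$; as $\phi$ was arbitrary, $E$ is strong Leinert.

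For $(1)\Rightarrow(2)$ I would first use the closed graph theorem to upgrade ``$\ell^\infty(E)\subseteq M_{cb}A(G)$'' to a uniform estimate $\|\phi\|_{M_{cb}A(G)}\le C\|\phi\|_\infty$ on all of $\ell^\infty(E)$, and then dualise the reformulation above. Since $J$ is a complete contraction, condition $(2)$ says that $J^{-1}$ is $cb$, equivalently that $J^*$ is a complete isomorphism onto its range. Here $(\ell^2(E)_r\cap\ell^2(E)_c)^*$ is the operator-space sum $\overline{\ell^2(E)_r}+\overline{\ell^2(E)_c}$, whereas $X_E^*=VN(G)_*/X_E^\perp$ identifies with the restrictions $\{\psi|_E:\psi\in A(G)\}$ of Fourier-algebra coefficients to $E$; under these identifications $J^*$ becomes the restriction map $\psi\mapsto\psi|_E$. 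Thus condition $(2)$ is equivalent to this restriction map being a complete surjection onto $\overline{\ell^2(E)_r}+\overline{\ell^2(E)_c}$, i.e. to a uniform row-plus-column splitting of the $E$-coefficients. One transfers the multiplier bound into this splitting via Theorem \ref{Schur}: each $\phi\in\ell^\infty(E)$ yields a Schur multiplier $\sigma_\phi(g,h)=\phi(g^{-1}h)$ with $\|\sigma_\phi\|_{V^\infty(G)}\le C\|\phi\|_\infty$, and the associated Jolissaint data from Theorem \ref{Jolissaint} feed the desired decomposition.

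The main obstacle is precisely this last step: manufacturing a uniform row/column splitting out of the mere uniform boundedness of the diagonal multipliers. Abstractly this is the statement that an operator space with a distinguished basis whose diagonal $\ell^\infty$-multipliers are uniformly completely bounded must be completely isomorphic to $\ell^2_r\cap\ell^2_c$. One convenient route is to average the multiplier estimate over random $\pm1$ (or random unitary) diagonal symbols $\phi$, each lying in the unit ball of $\ell^\infty(E)\subseteq M_{cb}A(G)$, and to combine this with the operator-space Khintchine inequalities, which identify the average of $\big\|\sum_h\varepsilon_h a(h)\otimes\lambda(h)\big\|_{min}$ with the row--column norm. Tracking the constants and making the duality identification of $X_E^*$ precise is where the real work lies; this is Pisier's argument.
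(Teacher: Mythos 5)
First, a point of comparison: the paper does not prove this proposition at all. It is stated as a known result of Pisier (\cite{Pisier}, Proposition 3.2) and used as a black box, so your attempt is not competing with an internal argument; it has to stand on its own.

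Your direction $(2)\Rightarrow(1)$ does stand on its own and is correct: extending $\phi\in\ell^\infty(E)$ by zero, applying hypothesis $(2)$ to the family $h\mapsto\phi(h)a(h)$, dominating the $E$-sums of positive operators by the $G$-sums, and closing with Corollary \ref{embedding_VN_l2_discrete} for $E=G$ gives the bound $C\|\phi\|_\infty$ for the multiplier acting on the span of the $\lambda(g)$'s; a routine Kaplansky-density/weak$^*$ argument then upgrades this to $\phi\in M_{cb}A(G)$ with $\|\phi\|_{M_{cb}A(G)}\le C\|\phi\|_\infty$. The reformulation of $(2)$ as a complete isomorphism $X_E\cong\ell^2(E)_r\cap\ell^2(E)_c$, and its dual description as a row-plus-column splitting of restricted coefficients, is also essentially right (modulo the weak$^*$ care you acknowledge).

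The genuine gap is in $(1)\Rightarrow(2)$, and it is more serious than the constant-tracking you defer to ``Pisier's argument'': the key inequality you propose to invoke does not exist. Your plan is sound up to the estimate $\|\sum_{h}a(h)\otimes\lambda(h)\|\le C\,\mathbb{E}_\varepsilon\|\sum_h\varepsilon_h a(h)\otimes\lambda(h)\|$, obtained by applying the multiplier with symbol $\varepsilon$ to the $\varepsilon$-randomized element. But there is no ``operator-space Khintchine inequality'' bounding $\mathbb{E}_\varepsilon\|\sum_h\varepsilon_h a(h)\otimes\lambda(h)\|_{\min}$ by $\max\{\mathrm{row},\mathrm{col}\}$: the Khintchine upper bound fails at $p=\infty$. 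Concretely, take $G=\mathbb{Z}$, $E=\{1,\dots,n\}$, $K=\ell^2(\{-1,1\}^n)$, and $a(h)=n^{-1/2}U_h$, where $U_h\in\mathcal{B}(K)$ is multiplication by the $h$-th coordinate. Then $\sum_h a(h)^*a(h)=\sum_h a(h)a(h)^*=1$, yet for \emph{every} sign pattern, evaluating the $C^*_r(\mathbb{Z})\cong C(\mathbb{T})$ leg at $z=1$ gives $\|\sum_h\varepsilon_h a(h)\otimes\lambda(h)\|\ge n^{-1/2}\|\sum_h\varepsilon_h U_h\|=n^{-1/2}\cdot n=\sqrt{n}$. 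So the inequality you want to cite is false for general $G$ and $E$; in fact it is essentially equivalent to the conclusion $(2)$ itself (if $(2)$ holds, every signed family has the same row and column norms, so the average bound follows; and under your uniform multiplier bound it would yield $(2)$). In other words, this step is circular. The true substitute is randomization by \emph{free} unitaries: the operator-coefficient Haagerup--Pisier inequality $\|\sum_h b_h\otimes\lambda_{\mathbb{F}}(g_h)\|\le 2\max\{\|\sum_h b_hb_h^*\|^{1/2},\|\sum_h b_h^*b_h\|^{1/2}\}$ does hold, but then one must manufacture, from a hypothesis that concerns only \emph{scalar} symbols $\phi\in\ell^\infty(E)$, a completely bounded transfer carrying $\lambda_{\mathbb{F}}(g_h)$ back to $\lambda_G(h)$; producing that operator-valued upgrade out of the Bo\.zejko--Fendler/Jolissaint factorizations is exactly the nontrivial content of Pisier's proof, and your sketch does not supply it.
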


\noindent \textbf{Remark:}  As mentioned above, the prototypical example of an infinite strong Leinert set is given by the set $E_1 = \{x_i,x_i^{-1}\}_{i \in \mathbb N}\subset \mathbb F_\infty$ consisting of the free generators and their inverses. See for example \cite[Theorem 0.1]{Pisier}.\\

Next we will show that there exist Leinert sets $E$  in $\mathbb F_N$ ($2 \le N \le \infty$) which are not strong Leinert sets and that, in particular, $1_E\in MA(\mathbb{F}_{N} )\setminus   MA_{cb}(\mathbb{F}_{N})$. The existence of Leinert sets in free groups which are not strong Leinert sets was first established by Bo$\dot{z}$ejko in \cite{Bozejko}.  The following result gives an explicit example of such a set in $\mathbb F_\infty$ with the above properties. The case $N < \infty$ then follows immediately because $\mathbb F_\infty$ can be realized as a subgroup of $\mathbb F_N$. 

\begin{proposition}\label{Leinert} Let  $E = \{x_ix_j^{-1}: 1 \le i \le j < \infty\}$ where $S=\{x_i\}$ denotes a countable set of free generators of $\mathbb{F}_{\infty}$. 
Then the function $1_E$  belongs to $MA(\mathbb{F}_{\infty})$, but not $M_{cb}A(\mathbb{F}_{\infty})$.
\end{proposition}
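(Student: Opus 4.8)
The plan is to prove the two assertions separately, and the first one is immediate. To see that $1_E \in MA(\mathbb{F}_\infty)$, I would apply Corollary \ref{Haagerup2} directly to $\phi = 1_E$. Every element of $E$ has the form $x_ix_j^{-1}$ with $i \le j$; such a word is reduced of length $2$ when $i < j$ and equals $e$ (length $0$) when $i = j$, so $|g| \le 2$ for all $g$ in the support of $1_E$. Consequently
\[
\sup_{g \in \mathbb{F}_\infty} |1_E(g)|(1+|g|)^2 \le (1+2)^2 = 9 < \infty,
\]
and Corollary \ref{Haagerup2} yields $1_E \in MA(\mathbb{F}_\infty)$ with $\|1_E\|_{MA(\mathbb{F}_\infty)} \le 18$. (Alternatively, this membership follows from $E$ being a Leinert set together with Bo\.zejko's lemma, but the length estimate is cleaner and gives an explicit bound.)

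For the second, harder assertion I would argue by contradiction through the Bo\.zejko--Fendler correspondence. Combining Theorems \ref{Jolissaint} and \ref{Schur}, a function $\phi$ lies in $M_{cb}A(\mathbb{F}_\infty)$ precisely when the kernel $\sigma(g,h) := \phi(g^{-1}h)$ belongs to $V^\infty(\mathbb{F}_\infty)$, and in that case $\|\phi\|_{M_{cb}A} = \|\sigma\|_{V^\infty}$. Applying this to $\phi = 1_E$ gives $\sigma(g,h) = 1_E(g^{-1}h)$, which is $1$ exactly when $g^{-1}h \in E$. The key idea is to locate a copy of the triangular truncation matrix inside $\sigma$. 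Taking the index family $F = \{x_n^{-1} : n \in \mathbb{N}\} \subseteq \mathbb{F}_\infty$ and computing $(x_m^{-1})^{-1}x_n^{-1} = x_m x_n^{-1}$, one finds
\[
\sigma(x_m^{-1}, x_n^{-1}) = 1_E(x_m x_n^{-1}) = \begin{cases} 1, & m \le n, \\ 0, & m > n, \end{cases}
\]
so that the restriction of $\sigma$ to $F \times F$ is exactly the upper-triangular all-ones matrix $[1_{m \le n}]_{m,n \in \mathbb{N}}$.

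Next I would invoke the elementary monotonicity of Schur norms under compression to a subfamily of indices: if $J : \ell^2(F) \hookrightarrow \ell^2(\mathbb{F}_\infty)$ denotes the inclusion and $T \in \mathcal{B}(\ell^2(F))$, then $J^* S_\sigma(JTJ^*) J = S_{\sigma|_{F\times F}}(T)$, whence $\|\sigma|_{F\times F}\|_{\mathrm{Schur}} \le \|\sigma\|_{V^\infty}$. But the Schur multiplier associated with $[1_{m \le n}]$ is nothing other than the main triangular truncation, whose norm on the $n \times n$ corner is classically of order $\log n$ and hence unbounded (see \cite{Pisier_Book}). Therefore $\|\sigma\|_{V^\infty} = \infty$, which forces $\|1_E\|_{M_{cb}A(\mathbb{F}_\infty)} = \infty$, i.e. $1_E \notin M_{cb}A(\mathbb{F}_\infty)$.

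I expect the only genuinely delicate ingredient to be the unboundedness of the triangular truncation, which is the crux of the argument; I would either cite the classical logarithmic lower bound for its norm or reproduce the short computation with Hilbert-matrix test operators realizing this growth. By contrast, the identification of the associated Schur kernel via Theorems \ref{Jolissaint} and \ref{Schur}, and the monotonicity of the Schur norm under compression, are routine given the machinery already in place.
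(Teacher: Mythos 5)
Your proposal is correct and follows essentially the same route as the paper: membership in $MA(\mathbb{F}_\infty)$ via the Haagerup-type bound of Corollary \ref{Haagerup2}, and exclusion from $M_{cb}A(\mathbb{F}_\infty)$ by contradiction through the Bo\.zejko--Fendler correspondence, compressing the associated Schur kernel to an index family on which it becomes the upper-triangular truncation, which is unbounded in infinite dimensions. The only cosmetic differences are your kernel convention $\phi(g^{-1}h)$ (compensated by indexing over $\{x_n^{-1}\}$ rather than $\{x_n\}$, as the paper does with its $\phi(gh^{-1})$ convention) and your citation of the logarithmic lower bound for triangular truncation where the paper cites Paulsen's exercises.
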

\begin{proof} Observe that
\[\sup\limits_{g\in \mathbb{F}_{\infty}} |1_E(g)|(1 + |g|)^2 = \sup\limits_{g\in \mathbb{F}_{\infty}} (1 + |g|)^2 = (1 + 2)2 = 9,\]
so $1_E \in MA(\mathbb{F}_{\infty} )$  by Corollary \ref{Haagerup2}. Now let $\phi:= 1_E$  and suppose, to get a contradiction, that $\phi \in M_{cb}A(\mathbb{F}_{\infty})$. It then
follows from this assumption and Theorem \ref{Schur}  that the function $\sigma_{\phi}: \mathbb{F}_{\infty} \times  \mathbb{F}_{\infty} \to \mathbb{C}$
given by
\[\sigma_{\phi}(g, h) = \phi(gh^{-1}),\]
for all $(g, h) \in  \mathbb{F}_{\infty} \times  \mathbb{F}_{\infty}$ belongs to $V^{\infty} (\mathbb{F}_{\infty})$.

Let us now consider the associated Schur multiplier $S_{\sigma_{\phi}}$. Let $\{\delta_g : g \in \mathbb{F}_{\infty}\}$ denote
the canonical orthonormal basis for $\ell^2(\mathbb{F}_{\infty})$  and  identify $\mathcal{B}(\ell^2(S))$  with the corner $P\mathcal{B}(\ell^2(\mathbb{F}_{\infty}))P \subset \mathcal{B}(\ell^2(\mathbb{F}_{\infty}))$, where 
$P$ is the orthogonal projection from $\ell^2(\mathbb{F}_{\infty})$ onto the subspace $\ell^2(S)$. If 
$T = [T(x_i, x_j)]_{(i,j)\in \mathbb{N}\times \mathbb{N} }\in \mathcal{B}(\ell^2(S))$ , then $S_{\sigma_{\phi}}T$ is
given by the infinite matrix
\[S_{\sigma_{\phi}}T = [1_E(x_ix_j^{-1})T(x_i, x_j)]_{(i,j)\in \mathbb{N}\times \mathbb{N}}\]
where
\[
1_E(x_ix_j^{-1})= \left\{
\begin{array}{rl}
1 & \textnormal{if } i \leq j,\\
0 & \textnormal{if } i>j
\end{array} \right.
\]

Thus the map $T\to  S_{\sigma_{\phi}}T$ is just the upper-triangular truncation map on $ \mathcal{B}(\ell^2(S))$. Since
$\ell^2(S)$ is not finite dimensional, it follows that upper triangular truncation is not bounded
on  $ \mathcal{B}(\ell^2(S))$ (see for example Problems 8.15 and 8.16 in \cite{Paulsen}). The unboundedness of $S_{\sigma_{\phi}}$ contradicts the
fact that $\sigma_{\phi}\in V^{\infty}(\mathbb{F}_{\infty})$, and therefore 
we must have $\phi=1_E \in MA(\mathbb{F}_{\infty})\setminus M_{cb}A(\mathbb{F}_{\infty})$ .
\end{proof}

\begin{corollary} \label{prelim_1}
There exists Leinert sets $E \subset \mathbb F_\infty$ with $1_E
\in MA(\mathbb F_\infty)\backslash M_{cb}A(\mathbb F_\infty)$.
\end{corollary}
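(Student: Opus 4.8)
The statement combines two facts about the specific set $E = \{x_i x_j^{-1} : 1 \le i \le j < \infty\}$ from Proposition \ref{Leinert}. The membership assertion $1_E \in MA(\mathbb{F}_\infty) \setminus M_{cb}A(\mathbb{F}_\infty)$ is exactly the content of Proposition \ref{Leinert}, so the plan is to take this $E$ as the witness and to verify the one remaining claim, namely that $E$ is genuinely a Leinert set.

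To establish that $E$ is a Leinert set I would invoke Bo$\dot{z}$ejko's lemma above, which reduces the Leinert property to showing that every function in $\ell^\infty(E)$ lies in $MA(\mathbb{F}_\infty)$. This is where the length estimate of Corollary \ref{Haagerup2} does the work: every element of $E$ is either the identity (when $i=j$) or a reduced word $x_i x_j^{-1}$ of length two (when $i<j$), so every $g \in E$ satisfies $|g| \le 2$. Given $\psi \in \ell^\infty(E)$, extend it by zero to a function $\tilde\psi$ on all of $\mathbb{F}_\infty$; then $\tilde\psi$ is supported on $E$ and
\[
\sup_{g \in \mathbb{F}_\infty} |\tilde\psi(g)|(1+|g|)^2 \le 9\,\|\psi\|_\infty < \infty,
\]
so Corollary \ref{Haagerup2} gives $\tilde\psi \in MA(\mathbb{F}_\infty)$. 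Since $\psi$ was arbitrary, $\ell^\infty(E) \subseteq MA(\mathbb{F}_\infty)$, and Bo$\dot{z}$ejko's lemma yields that $E$ is a Leinert set.

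There is essentially no hard obstacle here: the real analytic input --- the unboundedness of triangular truncation packaged into $1_E \notin M_{cb}A(\mathbb{F}_\infty)$ --- was already spent in Proposition \ref{Leinert}, and the $MA$ membership is uniform over all of $\ell^\infty(E)$ precisely because $E$ consists of words of bounded length. The only point requiring a moment's care is confirming that including the identity (the case $i=j$) causes no difficulty; this is automatic in the route above, since the length bound $|g| \le 2$, and hence $\ell^\infty(E) \subseteq MA(\mathbb{F}_\infty)$, holds irrespective of whether $e \in E$. One could alternatively try to verify the Leinert condition of Definition \ref{Leinert Property} directly, but tracking the cancellations in alternating products of the words $x_{i}x_{j}^{-1}$ is more delicate than the multiplier argument, so I would prefer the Bo$\dot{z}$ejko route.
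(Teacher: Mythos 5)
Your proof is correct, and it follows a genuinely different route from the paper's. For the multiplier assertion you do exactly what the paper does (quote Proposition \ref{Leinert}), but for the Leinert-set assertion the paper claims that $E=\{x_ix_j^{-1}: 1\le i\le j<\infty\}$ satisfies the combinatorial Leinert condition of Definition \ref{Leinert Property} by ``a routine calculation'' and then cites \cite{Leinert}, whereas you deduce it from Bo$\dot{z}$ejko's lemma together with Corollary \ref{Haagerup2}: since every element of $E$ has word length at most $2$, the zero extension of any $\psi\in\ell^\infty(E)$ satisfies $\sup_{g}|\psi(g)|(1+|g|)^2\le 9\|\psi\|_\infty<\infty$, hence lies in $MA(\mathbb F_\infty)$, and Bo$\dot{z}$ejko's lemma then gives that $E$ is a Leinert set. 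Your instinct to avoid the combinatorial route was sound for a stronger reason than mere delicacy: the set $E$ does \emph{not} satisfy the Leinert condition, so the paper's own argument is flawed at exactly this step. Indeed, take $g_1=x_1x_3^{-1}$, $g_2=x_2x_3^{-1}$, $g_3=x_2x_4^{-1}$, $g_4=x_1x_4^{-1}$, four distinct elements of $E$ with consecutive terms distinct; then
\[
g_1g_2^{-1}g_3g_4^{-1}=(x_1x_3^{-1})(x_3x_2^{-1})(x_2x_4^{-1})(x_4x_1^{-1})=e,
\]
violating the condition with $n=2$. (This total cancellation reflects the product structure $\{x_1,x_2\}\cdot\{x_3^{-1},x_4^{-1}\}\subseteq E$, which is the very ``matrix'' structure that makes the triangular-truncation argument of Proposition \ref{Leinert} work; product sets of this kind can never satisfy the Leinert condition.) Thus Leinert's theorem cannot be invoked for this set, and your argument via Bo$\dot{z}$ejko's lemma and Corollary \ref{Haagerup2} --- which uses only statements the paper itself provides, and is uniform over $\ell^\infty(E)$ precisely because $E$ consists of words of bounded length --- is the correct way to complete the proof. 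Your side remark that the presence of the identity in $E$ causes no difficulty is also accurate.
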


\begin{proof}  
Let  $E = \{x_ix_j^{-1}: 1 \le i \le j < \infty\}$ be as in the previous proposition. Then it is a routine calculation to show that $E$ satisfies the Leinert condition and is thus 
a Leinert set by \cite{Leinert}. The fact that $1_E
\in MA(\mathbb F_\infty)\backslash M_{cb}A(\mathbb F_\infty)$ is Proposition \ref{Leinert}.
\end{proof}

\begin{corollary} \label{corollary_prelim_1}
Let $E \subseteq \mathbb F_\infty$ be a Leinert set such that $1_E
\in MA(\mathbb F_\infty)\backslash M_{cb}A(\mathbb F_\infty)$. Then
the ideals $I(\mathbb F_\infty \backslash E)$ and $I(E)$ are
complemented in $A(\mathbb F_\infty)$, but not completely complemented in $A(\mathbb F_\infty)$. Furthermore, the annihilators $I(\mathbb F_\infty \backslash E)^\perp$ and $I(E)^\perp$ are complemented in
$VN(\mathbb F_\infty)$ but not completely complemented in $VN(\mathbb F_\infty)$.

In particular if $E = \{x_ix_j^{-1}: 1 \le i \le j < \infty\}$ where $S=\{x_i\}$ denotes the generators of $\mathbb{F}_{\infty}$, then $I(E)$ is complemented in $A(\mathbb{F}_{\infty})$ but is not   completely weakly complemented. 
\end{corollary}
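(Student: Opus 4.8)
The plan is to prove Corollary \ref{corollary_prelim_1} by combining the structural characterization of Leinert sets via their annihilators with the operator-space dichotomy already established in Proposition \ref{Leinert}. The key observation is that for a Leinert set $E$, the annihilator $I(E)^\perp \subseteq VN(G)$ is completely isometrically identifiable with $\ell^2(E)$ (either as a row or column Hilbert space, depending on the embedding); the Banach-space isomorphism $A_G(E) \cong \ell^2(E)$ of the definition dualizes to say that the restriction-to-$E$ operation, followed by the inverse of this isomorphism, yields a bounded projection. Concretely, I would first exhibit the projection $P$ onto $I(E)$ at the level of $A(G)$: since $1_E \in MA(\mathbb{F}_\infty)$, the multiplication operator $M_{1_E}: A(\mathbb{F}_\infty) \to A(\mathbb{F}_\infty)$ is bounded, idempotent (as $1_E^2 = 1_E$), and its range is $1_E \cdot A(\mathbb{F}_\infty) = I(\mathbb{F}_\infty \backslash E) = \ell^2(E)$ by the remark following Bo\.zejko's lemma. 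Thus $M_{1_E}$ complements $I(\mathbb{F}_\infty \backslash E)$, and $I - M_{1_E}$ complements $I(E)$; dually, the adjoint $M_{1_E}^*$ on $VN(\mathbb{F}_\infty)$ is a bounded idempotent onto $I(\mathbb{F}_\infty \backslash E)^\perp$, giving the weak complementation of $I(E)$.

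The second half, the failure of \emph{complete} complementation, is where the real content lies, and I would argue it by contraposition against the $cb$-multiplier dichotomy. If $I(E)$ (equivalently $I(\mathbb{F}_\infty \backslash E)$) were completely complemented in $A(\mathbb{F}_\infty)$, or if its annihilator were completely complemented in $VN(\mathbb{F}_\infty)$, I would want to extract from the completely bounded projection a completely bounded idempotent whose symbol is forced to be $1_E$. The natural candidate is to show that any bounded $A(G)$-module projection onto a closed ideal of this form is automatically a multiplication operator $M_v$ for some $v \in MA(G)$, and that completely boundedness of the projection upgrades $v$ to lie in $M_{cb}A(G)$. Since the ideal $I(\mathbb{F}_\infty \backslash E) = 1_E \cdot A(\mathbb{F}_\infty)$ has $1_E$ as the unique idempotent symbol determining it, we would conclude $1_E \in M_{cb}A(\mathbb{F}_\infty)$, contradicting Proposition \ref{Leinert}. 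The dual statement about $VN(\mathbb{F}_\infty)$ follows by taking adjoints, using that a map is completely bounded if and only if its adjoint is, with equal $cb$-norm.

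The main obstacle I anticipate is justifying that the complementing projection can be taken to be an $A(G)$-module map (and hence a multiplication by a single function) without losing the completely bounded estimate. For a general bounded projection onto an ideal this averaging step is delicate; here I expect one can exploit the commutative, hence $A(G)$-modular, structure together with an averaging argument over the action of $VN(G)$ on $A(G)$, as in the standard theory relating invariant complementation to multiplier projections. Alternatively, and perhaps more cleanly, I would bypass the module-map subtlety entirely by invoking the weak form: since $\ell^2(E) = I(\mathbb{F}_\infty \backslash E)$ is a Leinert set, its annihilator in $VN(\mathbb{F}_\infty)$ is completely isometric to a column (or row) Hilbert space, and a completely bounded idempotent onto this subspace would directly give the two-sided operator-norm estimates of Corollary \ref{embedding_VN_l2_discrete} \emph{with a reverse inequality}, which by Proposition \ref{Pisier_prop} would exhibit $E$ as a \emph{strong} Leinert set, again contradicting Proposition \ref{Leinert}. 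This second route makes the operator-space input explicit and isolates the contradiction at the level of the row/column norm comparison that distinguishes Leinert sets from strong Leinert sets.

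For the final ``in particular'' assertion, once the general dichotomy is in place one simply specializes to $E = \{x_i x_j^{-1}: 1 \le i \le j < \infty\}$: Corollary \ref{prelim_1} guarantees this is a Leinert set with $1_E \in MA(\mathbb{F}_\infty)\backslash M_{cb}A(\mathbb{F}_\infty)$, so $I(E)$ is complemented but not completely weakly complemented, as required.
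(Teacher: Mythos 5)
Your first paragraph reproduces the paper's argument for the positive half exactly: the projections are $M_{1_E}$ and $\mathrm{id}-M_{1_E}$, and their adjoints complement the annihilators. (One small slip: the adjoint of an idempotent has range equal to the annihilator of its kernel, so $M_{1_E}^*$ projects onto $I(E)^\perp$, not onto $I(\mathbb{F}_\infty\backslash E)^\perp$; the conclusion you draw is unaffected.)

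The genuine gap is in the negative half. Both of your routes reduce to the implication ``some completely bounded projection onto $I(E)^\perp$ exists $\Rightarrow 1_E\in M_{cb}A(\mathbb{F}_\infty)$,'' and neither sketch establishes it. In route (a), the averaging of an arbitrary cb projection into an $A(G)$-module (hence multiplier) projection is precisely the step that needs amenability-type input: the standard arguments you allude to average against an invariant mean or a bounded approximate identity in $A(G)$, and these are unavailable since $\mathbb{F}_\infty$ is not amenable and $A(\mathbb{F}_\infty)$ is not operator amenable. The paper's proof consists essentially of the correct substitute: $A_{cb}(\mathbb{F}_\infty)$ \emph{is} operator amenable by Forrest--Runde--Spronk (since $\mathbb{F}_\infty$ sits as an open subgroup of $\mathbb{F}_N$), and their Theorem 3.4 is what converts complete (weak) complementation of $I(E)$ into $1_E\in M_{cb}A(\mathbb{F}_\infty)$, giving the contradiction with Proposition \ref{Leinert}. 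This is a nontrivial external theorem, not a routine averaging. In route (b), the claim that a cb idempotent $Q$ onto $I(E)^\perp$ ``directly gives'' the reverse of the inequality in Corollary \ref{embedding_VN_l2_discrete} is unsubstantiated: to bound $\bigl\|\sum_{h}a(h)\otimes\lambda(h)\bigr\|_{\min}$ by $\|Q\|_{cb}$ times the row/column maximum, one would need a lift $X$ with $(\mathrm{id}\otimes Q)X=\sum_h a(h)\otimes\lambda(h)$ and $\|X\|_{\min}$ controlled by that maximum, and since nothing is known about $\ker Q$ there is no way to produce such an $X$ (the natural row/column realizations by matrix units do not land in $VN(\mathbb{F}_\infty)$). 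Relatedly, your assertion that the annihilator is ``completely isometric to a column (or row) Hilbert space'' is false: what is automatic is only the complete contraction of $I(E)^\perp$ \emph{into} the row/column intersection; a complete isomorphism in the other direction is, by Proposition \ref{Pisier_prop}, exactly the strong Leinert property, which is what fails for this set. Indeed, if route (b) worked as stated, every Leinert set admitting a cb projection would trivially be strong Leinert, and the paper would not have needed the operator amenability machinery.
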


\begin{proof} If we let $P:A(G) \to A(G)$ and $Q:A(G) \to A(G)$ be definded by $P(u)= 1_E u $ and $Q(u)=u-1_E u$. Then $P$ and $Q$ are projections onto $I(\mathbb F_\infty \backslash E)$ and $I(E)$ respectively.

Assume now that $E$ is such that  $I(E)^{\perp}$ is completely complemented in $VN(\mathbb{F}_{\infty})$. For a generic locally compact group $G$ we let $A_{cb}(G)$ be the closure of 
$A(G)$ when viewed as a subalgebra of $M_{cb}A(G)$. Then we know that $A(\mathbb{F}_N)$ is operator amenable \cite{F-R-S}. Since
 $\mathbb{F}_{\infty}$ can be realized as an open subgroup of $\mathbb{F}_N$, we get that  $A(\mathbb{F}_{\infty})$ is also operator  
amenable. It  follows from \cite{F-R-S}, Theorem 3.4 that $1_E \in M_{cb}A(\mathbb{F}_{\infty})$, which contradicts  our assumption that $1_E
\in MA(\mathbb F_\infty)\backslash M_{cb}A(\mathbb F_\infty)$. 

A similar argument as above shows that $I(\mathbb F_\infty \backslash E)^\perp$ is not completely 
complemented in $VN(\mathbb{F}_{\infty})$. 
 \end{proof}

\subsection{Complemented Ideals Vanishing on Leinert Sets in Discrete Subgroups of Locally Compact Groups}

Using our results on uniformly discrete subsets of locally compact groups, we will show (among other things) that if $H$ is a closed discrete subgroup of a locally compact group $G$ and $E \subseteq H$ is a Leinert set, then $I_G(E)$ is always complemented in $A(G)$ and always invariantly weakly complemented $A(G)$. \\

We begin with the following standard result: 

\begin{proposition} \label{extension_map}
Let $G$ be a locally compact group and let $E \subseteq G$ be a
closed subset.  Then the ideal $I(E) \subseteq A(G)$ has a Banach
space complement in $A(G)$ if and only if there exists a bounded
linear map $\Gamma:A(E) \to A(G)$ such that $\Gamma \varphi \big|_E
= \varphi$ for all $\varphi \in A(E)$.
\end{proposition}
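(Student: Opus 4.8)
The plan is to prove the equivalence by recognizing that $I(E)$ being complemented is precisely the splitting of the short exact sequence
\[
0 \longrightarrow I(E) \longrightarrow A(G) \stackrel{R}{\longrightarrow} A(E) \longrightarrow 0,
\]
where $R$ is the restriction map $R(u) = u\big|_E$. Recall that $A(E)$ is by definition the quotient $A(G)/I(E)$ equipped with the quotient norm, so $R$ is a well-defined bounded surjection with kernel $I(E)$. The whole statement is then an instance of the standard fact that a short exact sequence of Banach spaces splits (i.e. the kernel is complemented) if and only if the quotient map admits a bounded linear right inverse.

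First I would prove the forward direction. Suppose $P:A(G) \to A(G)$ is a bounded projection onto $I(E)$, and set $Q = \mathrm{Id} - P$, so that $Q$ is a bounded projection onto a complement $M$ of $I(E)$ with $\ker Q = I(E)$. Since $\ker Q = I(E) = \ker R$, the map $R$ factors through $Q$: the restriction $R\big|_M : M \to A(E)$ is a bounded linear bijection, and by the open mapping theorem its inverse is bounded. I would then define $\Gamma : A(E) \to A(G)$ by $\Gamma = (R\big|_M)^{-1}$, viewed as a map into $A(G)$ via the inclusion $M \hookrightarrow A(G)$. By construction $R(\Gamma \varphi) = \varphi$, which is exactly the statement $(\Gamma \varphi)\big|_E = \varphi$, and $\Gamma$ is bounded.

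For the converse, suppose $\Gamma : A(E) \to A(G)$ is bounded with $(\Gamma \varphi)\big|_E = \varphi$ for all $\varphi \in A(E)$; equivalently $R \circ \Gamma = \mathrm{Id}_{A(E)}$. I would define $Q = \Gamma \circ R : A(G) \to A(G)$ and $P = \mathrm{Id} - Q$. Then $Q$ is bounded, and $Q^2 = \Gamma R \Gamma R = \Gamma (R\Gamma) R = \Gamma R = Q$, so $Q$ is idempotent; hence $P = \mathrm{Id} - Q$ is a bounded projection. Its range is $\ker Q$: since $Ru = 0$ iff $u \in I(E)$, one checks $P u = u - \Gamma(u\big|_E) \in I(E)$ (because $(Pu)\big|_E = u\big|_E - (\Gamma(u\big|_E))\big|_E = u\big|_E - u\big|_E = 0$), and $Pu = u$ for $u \in I(E)$ (as then $u\big|_E = 0$ forces $Qu = 0$). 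Thus $P$ is a bounded projection of $A(G)$ onto $I(E)$, and $I(E)$ is complemented.

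The only genuine subtlety — and the one point requiring care rather than the routine algebra — is the appeal to the open mapping theorem in the forward direction to guarantee that $(R\big|_M)^{-1}$ is bounded; this needs $M = \mathrm{ran}\,P$ to be a closed subspace, which holds because it is the range of a bounded projection on a Banach space. Everything else is formal manipulation of the identity $R \circ \Gamma = \mathrm{Id}$, so I do not expect any serious obstacle. It is worth emphasizing that the identification of $A(E)$ with the quotient $A(G)/I(E)$, already recorded in the definition of the restriction algebra, is what makes the restriction map $R$ into the quotient map and lets us invoke the Banach-space splitting lemma directly.
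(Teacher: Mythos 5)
Your proof is correct, and half of it coincides with the paper's own argument: your converse direction (given $\Gamma$, set $P = \mathrm{Id} - \Gamma \circ R$, i.e.\ $Pu = u - \Gamma(u\big|_E)$, and verify idempotence and range) is exactly what the paper does. The forward direction is where you diverge. The paper constructs $\Gamma$ by hand: given $\varphi \in A(E)$, pick an extension $u \in A(G)$ with $\|u\|_{A(G)} \le \|\varphi\|_{A(E)} + \epsilon$ and set $\Gamma\varphi = u - Pu$; well-definedness follows because two extensions differ by an element of $I(E)$, which $\mathrm{Id}-P$ annihilates (as $P$ fixes $I(E)$ pointwise), and boundedness follows from the near-optimal choice of lift, giving the explicit estimate $\|\Gamma\| \le 1+\|P\|$. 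You instead invert the restriction $R\big|_M$ on the complement $M = \mathrm{ran}(\mathrm{Id}-P)$ and invoke the open mapping theorem. The two constructions actually produce the same map (the paper's $\Gamma\varphi = (\mathrm{Id}-P)u$ is precisely $(R\big|_M)^{-1}\varphi$, since it lies in $M$ and restricts to $\varphi$), so the difference is only in how boundedness is certified: the paper's route is elementary and quantitative, avoiding the open mapping theorem entirely, whereas yours is the standard Banach-space splitting lemma, shorter but non-quantitative. One small slip in your closing paragraph: you write that the subtlety is the closedness of $M = \mathrm{ran}\,P$, but $\mathrm{ran}\,P = I(E)$; the relevant subspace is $M = \mathrm{ran}(\mathrm{Id}-P) = \ker P$. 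Since this is likewise the range of a bounded projection (equivalently, the kernel of the bounded map $P$), it is closed, and your argument goes through unchanged.
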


\begin{proof} First suppose that $P:A(G) \to I(E)$ is a bounded projection.
Let $\epsilon > 0$ be fixed.  Given $\varphi \in A(E)$, let $u \in
A(G)$ be chosen so that $u\big|_E = \varphi$ and $\|u\|_{A(G)} \le
\|\varphi\|_{A(E)} + \epsilon$, and define
\begin{eqnarray*}
\Gamma \varphi = u - Pu \in A(G).
\end{eqnarray*}
Note that $\|\Gamma \varphi\|_{A(G)} \le
\|u\|_{A(G)}+\|P\|\|u\|_{A(G)} \le (1+\|P\|)\|\varphi\|_{A(E)} + (1
+ \|P\|)\epsilon $ and that $\Gamma \varphi \big|_E =  u\big|_E -
Pu\big|_E = \varphi$.  Also note that if $u_1 \in A(G)$ is any other
function such that $u_1\big|_E = \varphi$, then $u_1 - u \in I(E)$,
which implies that $u_1 - Pu_1 - (u - Pu) = u_1 - u - (P(u_1-u))  =
0$.  Thus $\Gamma:A(E) \to A(G)$ is a well defined map.  To see that
$\Gamma$ is linear, let $\varphi_1,\varphi_2 \in A(E)$ and let
$\alpha \in \mathbb C$.  Let $u_1,u_2 \in A(G)$ be two extensions of
$\varphi_1$ and $\varphi_2$ respectively.  Then clearly $\alpha u_1
+ u_2$ is an extension of $\alpha \varphi_1 + \varphi_2$, and
consequently
\[
\Gamma(\alpha \varphi_1 + \varphi_2) = \alpha u_1 + u_2 - P(\alpha
u_1 + u_2) = \alpha \Gamma \varphi_1 + \Gamma \varphi_2.
\]
Finally, since $\|\Gamma\| \le (1 + \|P\|)(1 + \epsilon) < \infty$,
$\Gamma$ is bounded.  Therefore $\Gamma$ is the required extension
map.

Conversely, suppose $\Gamma:A(E) \to A(G)$ is a bounded linear map
such that $\Gamma \varphi \big|_E = \varphi$ for all $\varphi \in
A(E)$.  For $u \in A(G)$ define $Pu = u - \Gamma(u\big|_E)$. Then
$P$ is obviously linear, and $\|Pu\|_{A(G)} \le (1 +
\|\Gamma\|)\|u\|_{A(G)}$.  Note that $Pu \big|_E = u\big|_E -
\Gamma(u\big|_E)\big|_E = u\big|_E - u\big|_E = 0$, so
$\textrm{ran}P \subseteq I(E)$.  Finally, if $u \in I(E)$, then
$u\big|_E = 0$, so $Pu = u$.  Therefore $P$ is the required
projection. \end{proof}

\begin{lemma} \label{restriction_transitivity}
Let $H \le G$ be a closed subgroup of a locally compact group $G$.  If $E \subseteq H$ is any closed subset, then $A_G(E) \cong A_H(E)$ completely isometrically.
\end{lemma}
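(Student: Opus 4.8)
The plan is to realize both restriction algebras as completely isometric quotients of the ambient Fourier algebras and to connect them through the restriction homomorphism $A(G) \to A(H)$. By definition $A_G(E) = A(G)/I_G(E)$ and $A_H(E) = A(H)/I_H(E)$, each carrying its quotient operator space structure, so the quotient maps $q_G : A(G) \to A_G(E)$ and $q_H : A(H) \to A_H(E)$ are complete quotient maps. The essential external input is \emph{Herz's restriction theorem} in its operator space form: for a closed subgroup $H \le G$, the restriction map $r : A(G) \to A(H)$, $u \mapsto u\big|_H$, is a complete quotient map (a completely contractive, complete metric surjection onto all of $A(H)$).

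Granting this, I would proceed as follows. First I would form the composition $\Phi = q_H \circ r : A(G) \to A_H(E)$. Since a composition of complete quotient maps is again a complete quotient map, $\Phi$ is a complete quotient map. Next I would identify $\Phi$ explicitly: because $E \subseteq H$, for every $u \in A(G)$ we have $\Phi(u) = (u\big|_H)\big|_E = u\big|_E$. This yields two facts at once. As a set of functions on $E$, the range $\Phi(A(G)) = \{u\big|_E : u \in A(G)\}$ is exactly $A_G(E)$, while surjectivity of $r$ gives $\Phi(A(G)) = \{v\big|_E : v \in A(H)\} = A_H(E)$, so the two algebras coincide as spaces of functions on $E$. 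Moreover the kernel is $\ker \Phi = \{u \in A(G) : u\big|_E = 0\} = I_G(E)$. Finally, a complete quotient map induces a complete isometry from the quotient by its kernel onto its range, so $\Phi$ induces a complete isometry $A(G)/I_G(E) \cong A_H(E)$; combined with $A_G(E) = A(G)/I_G(E)$ completely isometrically, this gives $A_G(E) \cong A_H(E)$ completely isometrically, the isomorphism being the identity on functions on $E$.

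The main obstacle is entirely the invocation of Herz's restriction theorem together with its operator space refinement; everything else is the formal calculus of quotient operator spaces. As a sanity check at the isometric (non-complete) level, one can verify the two matching inequalities directly: given $u \in A(G)$ with $u\big|_E = \varphi$, the function $v = u\big|_H$ satisfies $\|v\|_{A(H)} \le \|u\|_{A(G)}$ and $v\big|_E = \varphi$, whence $\|\varphi\|_{A_H(E)} \le \|\varphi\|_{A_G(E)}$; conversely, given $v \in A(H)$ with $v\big|_E = \varphi$ and any $\epsilon > 0$, Herz's extension result furnishes $u \in A(G)$ with $u\big|_H = v$ and $\|u\|_{A(G)} \le \|v\|_{A(H)} + \epsilon$, so that $u\big|_E = \varphi$ and $\|\varphi\|_{A_G(E)} \le \|v\|_{A(H)} + \epsilon$. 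Letting $\epsilon \to 0$ and taking infima gives the reverse inequality, and the complete version is the matrix-level analogue obtained from the complete quotient map property of $r$.
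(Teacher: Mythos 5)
Your proof is correct and rests on exactly the same key input as the paper's, namely Herz's restriction theorem in its completely isometric form, followed by formal quotient operator space calculus. The only cosmetic difference is that the paper phrases the quotient bookkeeping via the third isomorphism theorem, writing $A_H(E) \cong (A(G)/I_G(H))/(I_G(E)/I_G(H)) \cong A(G)/I_G(E)$, whereas you compose the complete quotient maps $q_H \circ r$ and compute the kernel directly; these are the same argument.
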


\begin{proof} It follows from Herz's restriction theorem \cite{Herz} that $A(H) \cong A(G)/I_G(H)$ and $I_H(E) \cong I_G(E)/I_G(H)$ completely isometrically.  Consequently, 
\begin{eqnarray*}
A_H(E) &\cong& A(H)/I_H(E) \\
&\cong& (A(G)/I_G(H))/(I_G(E)/I_G(H)) \\
&\cong& A(G)/I_G(E) \\
&\cong& A_G(E)
\end{eqnarray*}
completely isometrically. \end{proof}

We are now in a position to state the main theorem of this section.

\begin{theorem} \label{main_result}
Let $G$ be a locally compact group and let $H$ be a discrete
subgroup of $G$.  If $E$ is a Leinert set in $H$, then the ideal
$I_G(E) \subseteq A(G)$ has a Banach space complement in $A(G)$.
\end{theorem}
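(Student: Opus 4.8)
The plan is to invoke Proposition \ref{extension_map}, which reduces complementing $I_G(E)$ to producing a single bounded linear extension map $\Gamma:A_G(E) \to A(G)$ satisfying $\Gamma \varphi\big|_E = \varphi$ for every $\varphi \in A_G(E)$. Once such a $\Gamma$ is in hand, the conclusion of the theorem is immediate from that proposition, so the entire content of the proof is the construction and boundedness of $\Gamma$.

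The map I would use is already built. Since $H$ is a discrete (hence closed and discrete) subgroup of $G$ and $E \subseteq H$, the Examples following Definition \ref{uniformly_discrete} show that $E$ is uniformly discrete in $G$. Thus Theorem \ref{embedding_l2} applies and supplies a contraction $\Gamma_u:\ell^2(E) \to A(G)$ enjoying the interpolation property $(\Gamma_u \varphi)\big|_E = \varphi$ for all $\varphi \in \ell^2(E)$. The remaining task is to check that $\Gamma_u$ restricts to a \emph{bounded} map on the abstractly defined restriction algebra $A_G(E)$. Here Lemma \ref{restriction_transitivity} first identifies $A_G(E)$ with $A_H(E)$ completely isometrically, and then the Leinert hypothesis identifies $A_H(E)$ with $\ell^2(E)$ as a space of functions on $E$ with equivalent norms, say $\|\varphi\|_{\ell^2(E)} \le C\|\varphi\|_{A_H(E)}$ for some constant $C$.

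Combining these, each $\varphi \in A_G(E)$ lies in $\ell^2(E)$ with $\|\varphi\|_{\ell^2(E)} \le C\|\varphi\|_{A_G(E)}$, so I may set $\Gamma = \Gamma_u\big|_{A_G(E)}$ and estimate, using the contractivity in Theorem \ref{embedding_l2},
\[
\|\Gamma \varphi\|_{A(G)} = \|\Gamma_u \varphi\|_{A(G)} \le \|\varphi\|_{\ell^2(E)} \le C\|\varphi\|_{A_G(E)}.
\]
Hence $\Gamma$ is bounded, and part i) of that theorem yields $\Gamma \varphi\big|_E = \varphi$. Feeding this $\Gamma$ into Proposition \ref{extension_map} completes the argument.

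There is no serious technical obstacle once the machinery is assembled; the conceptual point is to recognize that the Leinert property is exactly what upgrades the contractive embedding $\Gamma_u$ of $\ell^2(E)$ into a bounded extension operator for $A_G(E)$. The one place deserving care is the direction of the norm comparison. The inclusion $\ell^2(E) \subseteq A_G(E)$ is automatically bounded for \emph{any} $E$, since $\ell^2(E) \hookrightarrow A(G)$ contractively and restriction to $E$ is the identity; what is genuinely needed, and what the Leinert condition alone provides, is the reverse bounded inclusion $A_G(E) \subseteq \ell^2(E)$. Because the Leinert isomorphism is only topological rather than isometric, the resulting $\Gamma$ is bounded but generally not contractive, which is all that Proposition \ref{extension_map} demands.
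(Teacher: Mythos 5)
Your proof is correct and is essentially identical to the paper's: both reduce the problem to producing a bounded extension map via Proposition \ref{extension_map}, use the Leinert hypothesis together with Lemma \ref{restriction_transitivity} to obtain the estimate $\|\varphi\|_{\ell^2(E)} \le C\|\varphi\|_{A_G(E)}$, and then observe that the contraction $\Gamma_u$ from Theorem \ref{embedding_l2} (applicable since $E \subseteq H$ is uniformly discrete in $G$) restricts to the required bounded extension operator on $A_G(E)$. Your added remark about the direction of the norm comparison correctly isolates the one point where the Leinert property is actually used.
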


\begin{proof}
By Proposition \ref{extension_map}, it suffices to find a
bounded linear extension map $\Gamma:A_G(E) \to A(G)$ such that
$\Gamma \varphi \big|_E = \varphi$ for all $\varphi \in A_G(E)$.

Since $E$ is a Leinert set in $H$, Lemma \ref{restriction_transitivity} implies that there exists some $C > 0$ for which $\|\varphi\|_{\ell^2(E)} \le C\|\varphi\|_{A_G(E)}$ for all $\varphi \in A_G(E)$.  Since $H$ is a discrete closed subgroup of $G$, $E$ is uniformly discrete in $G$.  Let $u \in P(G) \cap C_c(G)$ and $\Gamma_u:\ell^2(E) \to A(G)$ be given as in Theorem \ref{embedding_l2}.  Then $\Gamma_u: A_G(E) \to A(G)$ is bounded with norm $\le C$ and is the required extension map.   
\end{proof}

Theorem \ref{main_result} can be used to show that the Fourier algebra of any locally compact group containing a noncommutative free group as a discrete subgroup has (weakly) complemented ideals which fail to be (weakly) completely complemented.

\begin{corollary} \label{failure_complete_complementation}
Let $G$ be a locally compact group containing a noncommutative free
group as a discrete subgroup.  Then there are complemented ideals in
$A(G)$ which are complemented as Banach subspaces of $A(G)$, but not
as operator subspaces of $A(G)$.
\end{corollary}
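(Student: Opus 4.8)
The plan is to reduce the general statement for $G$ to the already-established case $G = \mathbb{F}_{\infty}$ (Corollary \ref{corollary_prelim_1}), using the machinery of uniformly discrete sets and restriction algebras developed in this section. Let $H \le G$ be a noncommutative free group sitting inside $G$ as a discrete subgroup. Since $H$ contains a copy of $\mathbb{F}_{\infty}$ (indeed $\mathbb{F}_2$ already contains $\mathbb{F}_{\infty}$ as a subgroup), I would first locate a Leinert set $E$ inside $H$ of the specific kind that is known to fail complete complementation, namely (the image of) $E = \{x_i x_j^{-1} : 1 \le i \le j < \infty\}$ from Proposition \ref{Leinert}, transported into $H$. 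Such an $E$ is a Leinert set with $1_E \in MA(H) \setminus M_{cb}A(H)$.

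The positive half is immediate: by Theorem \ref{main_result}, since $E$ is a Leinert set in the discrete subgroup $H$, the ideal $I_G(E)$ has a Banach space complement in $A(G)$. For the negative half I would argue by contradiction. Suppose $I_G(E)$ were complemented in $A(G)$ by a completely bounded projection. The key structural tool is Lemma \ref{restriction_transitivity}, which gives $A_G(E) \cong A_H(E)$ completely isometrically; this lets me pass complete boundedness back and forth between $G$ and the subgroup $H$. I would combine this with the analogue of Proposition \ref{extension_map} at the operator-space level: a completely bounded projection onto $I_G(E)$ yields a completely bounded extension map $A_G(E) \to A(G)$, and conversely. Composing the completely isometric identification $A_H(E) \cong A_G(E)$ with restriction then manufactures a completely bounded extension $A_H(E) \to A(H)$, hence a completely bounded projection onto $I_H(E)$ in $A(H)$. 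But this is exactly the situation that Corollary \ref{corollary_prelim_1} (via the operator-amenability argument of \cite{F-R-S}) rules out for the set $E$: complete complementation of $I_H(E)$ would force $1_E \in M_{cb}A(H)$, contradicting $1_E \in MA(H) \setminus M_{cb}A(H)$.

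The step I expect to require the most care is the transfer of the completely bounded projection from $A(G)$ down to $A(H)$. While Lemma \ref{restriction_transitivity} hands me the completely isometric identification of the restriction algebras, I must verify that a cb-projection onto $I_G(E)$ really does descend to a cb-projection onto $I_H(E)$, and this is where Herz's restriction theorem (cited in the proof of Lemma \ref{restriction_transitivity}) does the work: $A(H) \cong A(G)/I_G(H)$ completely isometrically, and $I_H(E)$ corresponds to $I_G(E)/I_G(H)$. The subtlety is that one needs the quotient map $A(G) \to A(H)$ to interact well with the given projection so that the induced map on $A(H)$ is again idempotent, onto $I_H(E)$, and completely bounded. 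Since the restriction $A(G) \to A(H)$ is a complete quotient map of completely contractive Banach algebras and $I_G(H) \subseteq I_G(E)$, the induced map on the quotient is completely bounded with cb-norm no larger, which is exactly what is needed to reach the contradiction.
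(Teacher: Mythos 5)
Your strategy is sound and reaches the right contradiction, but it travels a genuinely different road than the paper. The paper works entirely on the dual side: from a hypothetical completely bounded projection $P:A(G) \to I_G(E)$ it forms $Q = \mathrm{id}_{VN(G)} - P^*$, a completely bounded projection of $VN(G)$ onto $I_G(E)^\perp$, and then simply \emph{restricts} $Q$ to the von Neumann subalgebra $I_G(\mathbb F_\infty)^\perp \cong VN(\mathbb F_\infty)$; since $I_G(\mathbb F_\infty) \subseteq I_G(E)$ gives $I_G(E)^\perp \subseteq I_G(\mathbb F_\infty)^\perp$, and this annihilator is identified with $I_{\mathbb F_\infty}(E)^\perp$ under the $*$-isomorphism, one gets at once a completely bounded projection of $VN(\mathbb F_\infty)$ onto $I_{\mathbb F_\infty}(E)^\perp$, contradicting Corollary \ref{corollary_prelim_1}. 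The advantage of the dual route is that no quotient arguments are needed at all: restricting a completely bounded map to a subalgebra is trivially completely bounded. You instead stay on the predual side: completely bounded projection $\Rightarrow$ completely bounded extension map $A_G(E) \to A(G)$ (the operator-space version of Proposition \ref{extension_map}, which does hold, since $\mathrm{id} - P$ annihilates $I_G(E)$ and hence factors through the complete quotient map $A(G) \to A_G(E)$ with the same cb-norm), then compose with the completely contractive restriction $A(G) \to A(H)$ furnished by Herz's theorem and invoke Lemma \ref{restriction_transitivity} to obtain a completely bounded extension map $A_H(E) \to A(H)$, hence a completely bounded projection onto $I_H(E)$. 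This is a legitimate alternative, and arguably more in the spirit of the section (Theorem \ref{main_result} is itself proved by constructing extension maps); its cost is that you must state and verify the cb-analogue of Proposition \ref{extension_map}, which the paper never records.

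Two points in your write-up need repair. First, your final contradiction appeals to Corollary \ref{corollary_prelim_1} for the transported set $E$ inside a general noncommutative free group $H$, but that corollary is stated only for $\mathbb F_\infty$; for arbitrary $H$ (possibly of uncountable rank) the operator-amenability argument of \cite{F-R-S} behind it would need separate justification, as would your unproved assertions that the transported set satisfies $1_E \in MA(H)\setminus M_{cb}A(H)$ (true, but requiring that the Leinert condition is intrinsic to group elements and that completely bounded multipliers restrict to subgroups). All of this is avoided by descending not to $H$ but directly to the copy of $\mathbb F_\infty \le H \le G$, which is still a discrete (hence closed) subgroup of $G$, keeping $E \subseteq \mathbb F_\infty$; then Corollary \ref{corollary_prelim_1} applies verbatim, which is precisely what the paper does. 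Second, your last paragraph proposes to induce the given projection $P$ itself to the quotient $A(H) \cong A(G)/I_G(H)$; this does not work as stated, because passing $P$ to that quotient would require the invariance $P(I_G(H)) \subseteq I_G(H)$, and the inclusion $I_G(H) \subseteq I_G(E)$ gives no such thing. The correct mechanism is the one in your second paragraph: descend the \emph{extension map} by composing with restriction to the subgroup, and then rebuild a projection downstairs via $u \mapsto u - \Gamma_H(u\big|_E)$; no invariance property of $P$ is needed for that.
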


\begin{proof} Since any noncommutative free group contains isomorphic copy
of $\mathbb F_\infty$ as a subgroup, $G$ therefore contains a copy
of $\mathbb F_\infty$ as a discrete subgroup. Let $E$ be a Leinert
set in $\mathbb  F_\infty$ satisfying the properties of Proposition
\ref{prelim_1} and consider the closed ideal $I_G(E) \subseteq
A(G)$.

By Theorem \ref{main_result}, $I_G(E)$ is complemented in $A(G)$ as
a Banach subspace.  To show that it is not complemented as an
operator subspace, assume by contradiction that there exists a
completely bounded projection $P:A(G) \to I_G(E)$.  Then, by
duality, the map $Q:=\textrm{id}_{VN(G)} - P^*$ must be a completely bounded
projection from $VN(G)$ onto $I_G(E)^\perp$.

Now, since $I_G(\mathbb F_\infty)^\perp$ and $VN(\mathbb F_\infty)$
are $\ast$-isomorphic as von Neumann algebras, and under this isomorphism
$I_G(E)^\perp$ and $I_{\mathbb F_\infty}(E)^\perp$ are identified,
we see that the restriction of $Q$ to $I_G(\mathbb F_\infty) \cong
VN(\mathbb F_\infty)$ yields the existence of a completely bounded
projection from $VN(\mathbb F_\infty)$ onto $I_{\mathbb
F_\infty}(E)^\perp$.  This, however, contradicts Corollary
\ref{corollary_prelim_1}. \end{proof}

\begin{corollary} \label{failure_complete_complementation2}
Let $G$ be a locally compact group such that every complemented ideal in $A(G)$ is completely complemented.  Then $G$ has an open amenable 
subgroup. In particular, if $G$ is almost connected, then $G$ is amenable. 
\end{corollary}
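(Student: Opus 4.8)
The plan is to reduce to an open almost connected subgroup and then invoke Corollary \ref{failure_complete_complementation}. First I would produce a canonical open almost connected subgroup $H \le G$: letting $G_0$ be the identity component, the quotient $G/G_0$ is totally disconnected, so by van Dantzig's theorem it contains a compact open subgroup $\bar K$; the preimage $H$ of $\bar K$ under $G \to G/G_0$ is open in $G$ and satisfies $H/G_0 = \bar K$ compact, so $H$ is almost connected. My claim is that the hypothesis forces $H$ to be amenable, and $H$ is then the desired open amenable subgroup.

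To prove the claim I argue by contradiction: suppose $H$ is non-amenable. The heart of the matter is the structural assertion that an almost connected non-amenable group contains a noncommutative free group discretely. I would derive this as follows. By the Gleason--Yamabe theorem there is a compact normal subgroup $K \trianglelefteq H$ with $L := H/K$ a Lie group; since $K$ is compact (hence amenable) while $H$ is not, $L$ is a non-amenable almost connected Lie group, and as such has finitely many components, so its identity component $L_0$ is a non-amenable connected Lie group of finite index. A connected non-amenable Lie group contains a discrete copy of $\mathbb F_2$ by the Tits alternative, realized concretely via a Schottky (ping-pong) construction inside a subgroup locally isomorphic to $SL_2(\mathbb R)$ coming from the non-compact semisimple part. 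This yields a discrete free subgroup $F \cong \mathbb F_2 \le L$.

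Next I would lift $F$ back into $H$. Setting $\Gamma := \pi^{-1}(F)$ for $\pi : H \to L$, the group $K$ is a compact \emph{open} normal subgroup of $\Gamma$ (open because $F$ is discrete) with $\Gamma/K \cong F$. Since free groups are projective, the extension $1 \to K \to \Gamma \to F \to 1$ splits by an abstract section $s : F \to \Gamma$, and $s(F) \cap K = \{e\}$. As $K$ is an open neighbourhood of $e$ in $\Gamma$, this forces $s(F)$ to be discrete, hence closed, and isomorphic to $\mathbb F_2$. Because $H$ is open in $G$, the group $s(F)$ is a noncommutative free group sitting discretely and closed in $G$, so Corollary \ref{failure_complete_complementation} produces a complemented ideal of $A(G)$ that is not completely complemented, contradicting the hypothesis. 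Therefore $H$ is amenable.

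Finally, for the ``in particular'' I would use that in the almost connected case any open subgroup $H$ contains $G_0$ (as $G_0$ is connected and $H$ is clopen), so $H/G_0$ is an open subgroup of the compact group $G/G_0$ and hence of finite index; thus $[G:H] < \infty$. Passing to the normal core of $H$ gives a closed, open, amenable, finite-index normal subgroup, so $G$ is amenable-by-finite and therefore amenable. The step I expect to be the main obstacle is the Lie-theoretic input, namely extracting an explicit discrete $\mathbb F_2$ from the non-amenability of the connected Lie group $L_0$; the reduction to the Lie case via Gleason--Yamabe and the lifting through the compact kernel $K$ are comparatively routine.
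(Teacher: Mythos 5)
Your argument is correct, and at the top level it runs on the same engine as the paper's proof: non-amenability must produce a noncommutative free group sitting discretely inside $G$, contradicting the hypothesis via Corollary \ref{failure_complete_complementation}, and the open almost connected subgroup $H$ comes from van Dantzig's theorem in both cases. The work is distributed differently, though. The paper applies the contradiction to the identity component $G_e$, citing as known the (strictly stronger) black-box fact that a non-amenable \emph{connected locally compact} group contains a discrete copy of $\mathbb{F}_2$, and then transfers amenability upward to $H$ using that $H/G_e$ is compact and that amenability is stable under compact extensions. You instead apply the contradiction to $H$ itself, reducing the black box to the \emph{connected Lie} case via Gleason--Yamabe and then lifting the free subgroup back through the compact kernel $K$ by projectivity of free groups; your observation that $K$ is open in $\pi^{-1}(F)$ because $F$ is discrete, so that the section $s$ has discrete (hence closed) image, is exactly right and is a genuine addition not present in the paper. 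What each approach buys: the paper's proof is shorter but leans on a stronger unproved structural assertion; yours is longer but shows that the locally compact statement follows formally from the Lie statement, which is the honest core of the matter. One caveat on your remaining Lie-theoretic input: ping-pong ``inside a subgroup locally isomorphic to $SL_2(\mathbb{R})$'' needs care, because analytic subgroups of $L_0$ need not be closed, and a subgroup that is discrete in an immersed copy of $SL_2(\mathbb{R})$ need not be discrete in $L_0$ (compare an irrational line in a torus). The safe route is to pass to the semisimple adjoint quotient of $L_0$ by its radical and center, produce a discrete free subgroup there (a Schottky group, or the Tits alternative applied to a lattice), and then lift back using precisely your projectivity trick for the radical step and a covering-space argument for the central step. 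Since the paper asserts its own, stronger, structural fact with no proof or citation at all, this caveat does not put your argument below the paper's standard of rigor.
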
 

\begin{proof} Let $G_e$ denote the connected component of the identity. If $G_e$ is not amenable, then $G_e$ contains $\mathbb{F}_2$ as discrete 
subgroup. However, this is impossible by Corollary \ref{failure_complete_complementation}.  As such we may conclude that $G_e$ is amenable. But $G$ 
has an open almost connected subgroup $H$. Since $H/G_e$ is compact and $G_e$ is amenable, $H$ is also amenable. \end{proof}

\noindent \textbf{Remarks:} 
\begin{itemize} 
\item[1)] We do not know if for every nonamenable group $G$ it is possible to find complemented ideals in $A(G)$ that are not completely
 complemented. Moreover, we do not know if there exists an amenable group $G$ for which $A(G)$ contains a complemented ideals that is not 
also completely complemented. In fact, it would be very desirable to show that no such amenable group exists. If we could establish this as a
 fact, then we could show, using operator amenability, that if $G$ is amenable and if $I(E)$ is complemented  in $A(G)$, then 
$E\in \mathcal{R}_c(G)$, the closed coset ring of $G$.  
\item[2)] We note that if $H$ is a closed subgroup of $G$ and if $E\subset H$ is such that $I_H(E)$ is complemented in $A(H)$, 
then it does not follow that $I_G(E)$ is complemented in $A(G)$ even if $E$ is uniformly discrete in $G$. To see an example of this we 
let $G$ be the $ax+b$ group. Then $G$ has a normal subgroup $H$ which is isomorphic to $\mathbb{R}$. In turn, $H$ has a discrete subgroup 
$H_1$ which is isomorphic to $\mathbb{Z}$. Now since $H$ is abelian, $I_H(H_1)$ is complemented in $A(H)$ (see  \cite{For}, Proposition 3.4). However, 
one can also follow the same reasoning as in \cite{For} Example 3.13 to show that $I_G(H_1)$ is not complemented in $A(G)$. 
Consequently, the assumption in Theorem \ref{main_result} that $E$ be a Leinert set is crucial. 
\end{itemize}

\subsection{Leinert Sets and Invariantly Weakly Complemented Ideals}

Let $G$ be a locally compact group and let $H$ be a discrete subgroup of $G$.  We will now show that whenever $E$ is a Leinert set in $H$, then the complemented ideal $I_G(E) \subseteq A(G)$ is always invariantly weakly complemented in $A(G)$.  This should be contrasted with the fact that for non-discrete $G$, $I_G(E)$ is never invariantly complemented in $A(G)$. 

We begin with a few preliminaries: Let $X$ and $Y$ be Banach spaces.  Recall that the space $\mathcal B(X, Y^*)$ is isometrically isomorphic to the dual space $(X \otimes^\gamma Y)^*$, where $\otimes^\gamma$ denotes the Banach space projective tensor product.  The duality is given by 
\begin{eqnarray*}
\langle x \otimes y, \Gamma \rangle = \langle y, \Gamma x \rangle && (x \in X, \ y \in Y, \ \Gamma \in \mathcal B(X, Y^*)).  
\end{eqnarray*} 
Given a Banach space $X$ and $C \ge 0$, we write $b_C(X) = \{x \in X: \|x\| \le C\}$.

\begin{proposition} \label{main_result_invariant}
Let $G$ be a locally compact group and let $H$ be a discrete
subgroup of $G$.  If $E$ is a Leinert set in $H$, then the ideal
$I_G(E) \subseteq A(G)$ is invariantly weakly complemented in $A(G)$.
\end{proposition}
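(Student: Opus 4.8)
The plan is to prove invariant weak complementation by constructing a projection $P : VN(G) \to I_G(E)^\perp$ that is an $A(G)$-bimodule map, and to build $P$ out of the embedding maps from the previous subsection. Recall that $I_G(E)^\perp \subseteq VN(G)$ consists of those operators ``supported'' on $E$; since $E$ is a Leinert set in the discrete subgroup $H$, Lemma \ref{restriction_transitivity} tells us $A_G(E) \cong \ell^2(E)$ isomorphically, and by the duality described after Corollary \ref{embedding_VN_l2_discrete}, $I_G(E)^\perp$ should be identified (as a Banach space) with $\ell^2(E)$ via $T \mapsto T\check{u}\big|_E$, where $u \in P(G) \cap C_c(G)$ is the fixed function associated to the uniformly discrete set $E$. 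The natural candidate for $P$ is therefore the composition: restrict $T \in VN(G)$ to the ``diagonal'' data $(T\check{u}(h))_{h \in E} \in \ell^2(E)$ using Corollary \ref{embedding_VN_l2}, then map this $\ell^2(E)$-datum back into $I_G(E)^\perp$ by $\varphi \mapsto \lambda(\varphi) = \sum_{h\in E}\varphi(h)\lambda(h)$.

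First I would make precise the Banach-space identification of $I_G(E)^\perp$ with $\ell^2(E)$. Using that $E$ is a Leinert set, the map sending $\varphi \in \ell^2(E)$ to $\lambda(\varphi) = \sum_{h \in E}\varphi(h)\lambda(h) \in I_G(E)^\perp$ is an isomorphism onto its range, and this range is exactly $I_G(E)^\perp$ (this is the content of the duality after the Leinert-set definition, and of $A_G(E)\cong \ell^2(E)$). Corollary \ref{embedding_VN_l2} supplies the bound $\|(T\check u(h))_{h\in E}\|_{\ell^2(E)} \le \|T\|_{VN(G)}$, which guarantees that the restriction map $R : VN(G) \to \ell^2(E)$, $R(T) = (T\check u(h))_{h\in E}$, is bounded. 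I would then set $P = \lambda \circ R$, i.e. $P(T) = \sum_{h\in E}(T\check u(h))\,\lambda(h)$, and verify that $P$ is a bounded projection onto $I_G(E)^\perp$: boundedness follows from composing the two bounds, and idempotence follows because for $T \in I_G(E)^\perp$, $T = \lambda(\psi)$ for some $\psi \in \ell^2(E)$ and one computes $\lambda(\psi)\check u(h) = \psi(h)u(e) = \psi(h)$ for $h \in E$ (using the orthonormality from Lemma \ref{ONS}, exactly as in Theorem \ref{embedding_l2}(i)), so $R$ inverts $\lambda$ on $I_G(E)^\perp$.

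The heart of the matter is the module property $P(u_0 \cdot T) = u_0 \cdot P(T)$ for all $u_0 \in A(G)$, $T \in VN(G)$, where the $A(G)$-action on $VN(G)$ is as in Definition \ref{complemented_ideal}. The key computation is to unwind $(u_0 \cdot T)\check u(h)$ in terms of $T\check u(h)$ and the values of $u_0$ on $E$. Since $\lambda(h) \in I_G(E)^\perp$ for $h \in E$, and the $A(G)$-module structure acts on $\lambda(h)$ by $u_0 \cdot \lambda(h) = u_0(h)\lambda(h)$ (because $\lambda(h)$ is an evaluation-type functional at $h$), the projection $P$ will intertwine the action provided the restriction functional $T \mapsto T\check u(h)$ transforms by the scalar $u_0(h)$ under $u_0\cdot(-)$; I expect this to reduce, via the defining pairing $Tv(x) = \langle \delta_x * \check v, T\rangle$ and the support properties of $u$, to the identity $(u_0\cdot T)\check u(h) = u_0(h)\,T\check u(h)$ for $h \in E$. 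This last identity is where the Leinert/uniform-discreteness hypotheses do real work, since it is precisely the vanishing of $u$ off a small neighbourhood together with $u(h_1^{-1}h_2) = 0$ for distinct $h_1,h_2\in E$ that isolates the single value $u_0(h)$. I anticipate this module-intertwining step to be the main obstacle: it requires care in manipulating the $VN(G)$-module pairing and in confirming that $u_0 \cdot \lambda(h) = u_0(h)\lambda(h)$, and it is where a naive computation could hide a convolution that fails to collapse. Once that identity is in hand, the conclusion $P(u_0\cdot T) = \sum_{h\in E} u_0(h)(T\check u(h))\lambda(h) = u_0\cdot \sum_{h\in E}(T\check u(h))\lambda(h) = u_0 \cdot P(T)$ is immediate, completing the proof that $I_G(E)$ is invariantly weakly complemented.
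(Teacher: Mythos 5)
Your construction of the bounded projection is sound and coincides with the paper's starting point: $P_u = \Gamma_u^*$, given by $P_uT = \sum_{h\in E}\langle \delta_h*u,\,T\rangle\,\lambda(h)$, is bounded by Theorem \ref{embedding_l2} and Corollary \ref{embedding_VN_l2}, and idempotence on $I_G(E)^\perp$ follows from the orthogonality $u(h_1^{-1}h_2)=0$ of Lemma \ref{ONS}, exactly as you say. But the step you yourself flagged as the main obstacle is a genuine gap: the identity $(u_0\cdot T)\check u(h) = u_0(h)\,T\check u(h)$ is \emph{false} for a fixed $u$ whenever $G$ is non-discrete. Unwinding the pairing, $(u_0\cdot T)\check u(h) = \langle u_0\,(\delta_h*u),\,T\rangle$, and $u_0(\delta_h*u)$ is the pointwise product of $u_0$ with a function supported on $h\,\mathrm{supp}(u)$; this equals $u_0(h)(\delta_h*u)$ only if $u_0$ is constant on $h\,\mathrm{supp}(u)$. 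Concretely, take $T=\lambda(g)$ with $g\in h\,\mathrm{supp}(u)$, $g\ne h$, $u(h^{-1}g)\ne 0$: then $(u_0\cdot T)\check u(h) = u_0(g)u(h^{-1}g)$ while $u_0(h)\,T\check u(h) = u_0(h)u(h^{-1}g)$, and these differ for suitable $u_0\in A(G)$ since $\mathrm{supp}(u)$ is a genuine neighbourhood of $e$. So for non-discrete $G$ your single-$u$ map $P_u$ gives weak complementation but is not an $A(G)$-module map; this is consistent with the paper's observation that $I_G(E)$ is never \emph{invariantly} complemented in $A(G)$ when $G$ is non-discrete, so no exact pointwise-evaluation identity of the kind you need can hold.

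The paper repairs exactly this defect by averaging over shrinking supports. It takes a neighbourhood basis $\{\mathcal V_\alpha\}$ at $e$, functions $u_\alpha \in P(G)\cap C_c(G)$ with $\mathrm{supp}\,u_\alpha \subseteq \mathcal V_\alpha$, and the uniformly bounded net of projections $P_\alpha = \Gamma_{u_\alpha}^*$. Passing to subnets, $u_\alpha \to m$ weak$^*$ where $m$ is a \emph{topologically invariant mean} on $VN(G)$ (Renaud's theorem), and $P = w^*\text{-}\lim_\alpha P_\alpha$ exists in $(VN(G)\otimes^\gamma A(G))^*$ by Banach--Alaoglu. The limiting projection $PT = \sum_{h\in E}\langle \lambda(h^{-1})T,\,m\rangle\,\lambda(h)$ is invariant precisely because of the invariance of $m$: in the invariance computation one reaches $\langle (\delta_{h^{-1}}*v)\cdot(\lambda(h^{-1})T),\,m\rangle = v(h)\,\langle\lambda(h^{-1})T,\,m\rangle$, which is exactly the collapse your computation requires --- but it only holds in the limit against $m$, not for any fixed $u_\alpha$. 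To salvage your argument you would need to replace your fixed $u$ by this limiting device (or restrict to discrete $G$, where $u=\delta_e$ makes your identity literally true, but that misses the substance of the proposition).
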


\begin{proof} Given the Leinert set $E \subseteq H$, Lemma \ref{restriction_transitivity} tells us that there exists some $C > 0$ such that 
\begin{eqnarray*}
\|u\big|_E\|_2 \le C\|u\big|_E\|_{A_G(E)} \qquad (u \in A(G)).
\end{eqnarray*}
Since $H$ is a discrete subgroup of $G$, we can find an open neighborhood
$\mathcal V$ of the identity $e \in G$ such that $\mathcal V \cap H = \{e\}$.
Let $\{\mathcal V_\alpha\}$ be a neighbourhood basis at $e$ such that $\mathcal V_\alpha \subseteq \mathcal V$ for all $\alpha$.
For each $\alpha$, choose a function $u_\alpha \in P(G)\cap C_c(G)$ with $\textrm{supp} u_\alpha \subseteq \mathcal V_\alpha$.

From Theorem \ref{embedding_l2} and Theorem \ref{main_result}, we know that for 
each $\alpha$, the linear map $\Gamma_\alpha:A_G(E) \to A(G)$ given by
\begin{eqnarray*}
\Gamma_\alpha \varphi = \sum_{h \in E}\varphi(h)(\delta_h*u_\alpha) \qquad (\varphi \in A_G(E))
\end{eqnarray*}
is bounded with $\|\Gamma_\alpha\| \le C$, and satisfies $\Gamma_\alpha \varphi \big|_E = \varphi$ for all $\varphi
\in A_G(E)$.

Letting $$P_\alpha: = \Gamma_\alpha^*:VN(G) \to I_G(E)^\perp \subseteq VN(G),$$ we obtain a net of projections $$\{P_\alpha\} \subseteq b_{C}(\mathcal B(VN(G), VN(G))) = b_{C}((VN(G) \otimes^\gamma A(G))^*).$$  It is easy to see that for each $\alpha$, $P_\alpha$ is defined by the following equation:
\begin{eqnarray*}
P_\alpha T = \sum_{h \in E} \Big\langle u_\alpha, \lambda(h^{-1})T \Big\rangle \lambda (h) && (T \in VN(G)).
\end{eqnarray*}    
Note that the above sum converges in the $\ell^2$-sense for all $T \in VN(G)$.

Now consider the net $\{u_\alpha\} \subset A(G)$.  By passing to a subnet if necessary, we may assume that $u_\alpha \to m \in VN(G)^*$ weak$^\ast$, where $m$ is a topologically invariant mean on $VN(G)$. See \cite{Renaud}, Theorem 4. Since $\{P_\alpha\}$ is also a bounded net in $(VN(G) \otimes^\gamma A(G))^*$, the Banach-Alaoglu theorem implies that by possibly passing to yet another weak$^\ast$-convergent subnet, we may assume that $P = w^\ast-\lim_\alpha P_\alpha \in b_{C}(\mathcal B(VN(G), VN(G))$ exists.

We will now show that $P$ is an invariant projection onto $I_G(E)^\perp$.  To do this, first observe that $P(VN(G)) \subseteq I_G(E)^\perp$.  Indeed, suppose that $v \in I_G(E)$ and $T \in VN(G)$.  Then 
\begin{eqnarray*}
\langle v, PT \rangle &=& \langle T \otimes v, P \rangle = \lim_\alpha  \langle T \otimes v, P_\alpha \rangle \\
&=& \lim_\alpha \langle v, P_\alpha T \rangle = \lim_\alpha \sum_{h \in E} v(h) \Big\langle u_\alpha,  \lambda(h^{-1})T \Big\rangle \\ 
&=& \lim_\alpha 0 = 0.
\end{eqnarray*}   
Therefore $P(VN(G)) \subseteq I_G(E)^\perp$.  Next, we observe that $PT = T$ for all $T \in I_G(E)^\perp$.   If $T \in I_G(E)^ \perp$ then there exists some function $f \in \ell^2(E)$ such that $T = \sum_{h \in E} f(h)\lambda(h)$.  But then for every $\alpha$ we have  
\begin{eqnarray*} 
P_\alpha T &=& \sum_{h \in E} \Big\langle u_\alpha, \lambda(h^{-1})T \Big\rangle \lambda(h) \\
&=& \sum_{h \in E} \sum_{s \in E} f(s) u_\alpha(h^{-1}s) \lambda(h) \\
&=& \sum_{h \in E} f(h)\lambda(h) = T.
\end{eqnarray*}
From this it follows that $PT = T$.  We therefore have that $P:VN(G) \to I_G(E)^\perp$ is a bounded projection.  It remains to show that $P$ is invariant.  To do this, let $A_c(G) = A(G) \cap C_c(G)$, let $u \in A_c(G)$, $v \in A(G)$, and $T \in VN(G)$.  Then we compute 
\begin{eqnarray*}
\langle u, P(v \cdot T) \rangle &=& \lim_\alpha \langle u, P_\alpha (v \cdot T) \rangle \\
&=& \lim_\alpha \sum_{h \in E} u(h) \Big\langle u_\alpha, \lambda(h^{-1}) (v \cdot T) \Big\rangle \\
&=& \lim_\alpha \sum_{h \in E} u(h) \Big\langle v(\delta_h*u_\alpha), T \Big\rangle \\
&=& \lim_\alpha \sum_{h \in E} u(h) \Big\langle \delta_h *((\delta_{h^{-1}}*v)u_\alpha), T \Big\rangle \\
&=& \lim_\alpha \sum_{h \in E} u(h) \Big\langle u_\alpha, (\delta_{h^{-1}}*v) \cdot (\lambda(h^{-1})T) \Big\rangle \\
&=& \sum_{h \in E} u(h) \Big\langle (\delta_{h^{-1}}*v) \cdot (\lambda(h^{-1})T), m \Big\rangle \\
&=& \sum_{h \in E} u(h)v(h) \Big\langle \lambda(h^{-1})T, m \Big\rangle \ \ \ \ (\textrm{by the invariance of $m$}) \\
&=& \lim_\alpha \sum_{h \in E} u(h)v(h) \Big\langle u_\alpha, \lambda(h^{-1}) T \Big\rangle \\
&=& \lim_\alpha \langle uv, P_\alpha T \rangle \\
&=& \langle uv, PT \rangle = \langle u, v \cdot PT \rangle. 
\end{eqnarray*}
That is, $\langle u, P(v \cdot T) = \langle u, v \cdot PT \rangle$ for all $u \in A_c(G)$.  Since $A_c(G)$ is norm dense in $A(G)$, it follows that $P(v \cdot T) = v \cdot PT $.  \end{proof} 

\noindent \textbf{Remark:} Observe that the above invariant projection is given by the formula 
\begin{eqnarray*}
PT = \sum_{h \in E} \Big \langle \lambda(h^{-1})T, m \Big\rangle \lambda(h) && (T \in VN(G)),  
\end{eqnarray*}
where $m \in VN(G)^*$ denotes the invariant mean obtained above.  Note that since for any nondiscrete group $G$, $m$ must annihilate $C^*_\lambda(G)$ (\cite{Renaud}), it follows that $$C^*_\lambda(G) \subseteq \ker{P}.$$  

\subsection{Strong Leinert Sets and Completely Complemented Ideals}

Let $E \subseteq H \le G$ be as in the previous section.  A natural question that arises within the above framework is: {\it Under what conditions are the projections $$P, P_\alpha:VN(G) \to I_G(E)^\perp$$ constructed above completely bounded?} We know from Corollary \ref{failure_complete_complementation} that it can happen that {\it none} of the projections $P, P_\alpha: VN(G) \to I_G(E)^\perp$ are completely bounded.  On the other hand, it seems natural to expect that if $E$ is assumed to be a strong Leinert set in $H$, then $I_G(E)$ should be weakly completely complemented.  In this final section, we present some evidence and partial results in this direction. 

Let $G_d$ denote the abstract group $G$ equipped with its discrete topology, and let $\lambda_d:G \to \mathcal U(\ell^2(G))$ denote the left regular representation of $G_d$ on $\ell^2(G)$.  Let $C^*_{\lambda_d}(G_d)$ denote the reduced group C$^\ast$-algebra of $G_d$, and denote by $C^*_\delta(G)$ the norm closure of $\lambda(\ell^1(G))$ in $\mathcal B(L^2(G))$.  We may identify $C^*_\delta(G)$ with the C$^\ast$-algebra $C^*_{\lambda_0} \subseteq \mathcal B(L^2(G))$ generated by the representation $\lambda_0:G_d \to \mathcal U(L^2(G))$, where $\lambda_0(t) = \lambda(t)$ for all $t \in G$.  Observe that $C^*_\delta(G)$ is a C$^\ast$-subalgebra of $VN(G)$ and that $C^*_\delta(G)$ is $\sigma$-weakly dense in $VN(G)$. 

Our first goal is to show that if $E$ is a strong Leinert set, then the restriction $P\big|_{C^*_\delta(G)}:C^*_\delta(G) \to I_G(E)^\perp$ of the invariant projection $P$ constructed in Proposition \ref{main_result_invariant} is completely bounded.  To do this, we first need an elementary lemma.

\begin{lemma}
Consider the algebras $\lambda(\ell^1(G)) \subseteq \mathcal B(L^2(G))$ and $\lambda_d(\ell^1(G)) \subseteq  \mathcal B(\ell^2(G))$, and define $\pi:\lambda(\ell^1(G)) \to \lambda_d(\ell^2(G))$ by  
\begin{eqnarray*}
\pi(\lambda(f)) = \lambda_d(f) && (f \in \ell^1(G)).
\end{eqnarray*}
Then $\pi$ extends to a $\ast$-homomorphism from $C^*_\delta(G)$ onto $C^*_{\lambda_d}(G_d)$.  
\end{lemma}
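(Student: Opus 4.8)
The plan is to build $\pi$ as the continuous extension of an evidently multiplicative, $\ast$-preserving map off the dense $\ast$-subalgebra $\lambda(\ell^1(G)) \subseteq C^*_\delta(G)$, so that the only real content is a single operator-norm estimate. Since $\lambda(C_c(G_d))$ is already dense in $C^*_\delta(G)$ and both $\lambda$ and $\lambda_d$ are contractive for $\|\cdot\|_1$, it suffices to establish, for finitely supported $f$,
\[
\|\lambda_d(f)\|_{\mathcal B(\ell^2(G))} \le \|\lambda(f)\|_{\mathcal B(L^2(G))} \qquad (f \in C_c(G_d)).
\]
Granting this, if $\lambda(f) = \lambda(g)$ then $\|\lambda_d(f) - \lambda_d(g)\| = \|\lambda_d(f-g)\| \le \|\lambda(f-g)\| = 0$, so $\pi$ is well defined and contractive on $\lambda(C_c(G_d))$. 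Because $\lambda(f)\lambda(g) = \lambda(f*g)$ and $\lambda(f)^* = \lambda(f^*)$ with $f^*(t) = \overline{f(t^{-1})}$ (the modular function is trivial on $G_d$), and the same relations hold verbatim for $\lambda_d$, the map $\pi$ is multiplicative and $\ast$-preserving on this dense $\ast$-subalgebra. A contractive $\ast$-map off a dense $\ast$-subalgebra extends by continuity to a $\ast$-homomorphism on the closure $C^*_\delta(G)$.

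The heart of the matter is the displayed inequality, which says precisely that the left regular representation $\lambda_d$ of $G_d$ is weakly contained in the representation $\lambda_0 = \lambda|_{G_d}$ of $G_d$ on $L^2(G)$. I would prove it concretely, reusing the unit vectors $\xi_{\mathcal V} = 1_{\mathcal V}/\|1_{\mathcal V}\|_2$ from the construction preceding Lemma \ref{ONS}. For $t \in G$,
\[
\langle \lambda_0(t)\xi_{\mathcal V} | \xi_{\mathcal V} \rangle = \frac{|t\mathcal V \cap \mathcal V|}{|\mathcal V|} \qquad (t \in G),
\]
so as $\mathcal V$ shrinks through a neighbourhood basis at $e$ these positive-definite functions on $G_d$ converge pointwise to $1_{\{e\}}$, the coefficient of $\lambda_d$ at the cyclic vector $\delta_e$. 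To turn this into the norm bound without quoting the general weak-containment theorem, set $a = f^**f \ge 0$, so that $\|\lambda_d(f)\|^2 = \|\lambda_d(a)\|$ and $\|\lambda(f)\|^2 = \|\lambda(a)\|$ by the $C^*$-identity. Using the faithful normal trace $\tau$ on $VN(G_d)$ normalised by $\tau(\lambda_d(g)) = g(e)$, the spectral theorem gives, for the positive element $\lambda_d(a)$,
\[
\|\lambda_d(a)\| = \lim_{k} \tau\big(\lambda_d(a)^k\big)^{1/k} = \lim_{k} \big(a^{(*k)}(e)\big)^{1/k},
\]
where $a^{(*k)}$ is the $k$-fold convolution power. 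On the other hand $\lambda_0(a) \ge 0$, so for each $k$ and each $\mathcal V$,
\[
\|\lambda_0(a)\|^{k} = \|\lambda_0(a)^{k}\| \ge \langle \lambda_0(a)^{k}\xi_{\mathcal V} | \xi_{\mathcal V}\rangle = \sum_{t \in G} a^{(*k)}(t)\,\frac{|t\mathcal V \cap \mathcal V|}{|\mathcal V|},
\]
and since the right-hand side is a finite sum it tends to $a^{(*k)}(e)$ as $\mathcal V \to \{e\}$. Thus $\|\lambda_0(a)\| \ge a^{(*k)}(e)^{1/k}$ for every $k$, and letting $k \to \infty$ yields $\|\lambda(a)\| = \|\lambda_0(a)\| \ge \|\lambda_d(a)\|$, which is the required inequality.

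Surjectivity is then automatic: the range of the $\ast$-homomorphism $\pi$ is closed and contains the dense subalgebra $\lambda_d(C_c(G_d))$, hence equals $C^*_{\lambda_d}(G_d) = \overline{\lambda_d(\ell^1(G))}$. The one genuinely non-formal step, and the place to be careful, is the norm domination above; I expect it to be the main obstacle, since it is exactly where the topology of $G$ enters through the approximate vectors $\xi_{\mathcal V}$, whereas everything else is the routine bookkeeping of extending a contractive $\ast$-map. I would also remark that this inequality is typically strict, so $\pi$ need not be injective, consistent with $C^*_{\lambda_d}(G_d)$ being a genuine quotient of $C^*_\delta(G)$.
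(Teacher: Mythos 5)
Your proof is correct, and while its skeleton matches the paper's --- extend a contractive $\ast$-preserving, multiplicative map from the dense $\ast$-subalgebra $\lambda(C_c(G_d))$, so that the entire content is the inequality $\|\lambda_d(f)\| \le \|\lambda(f)\|$ --- your treatment of that key inequality is genuinely different. The paper does not prove it from scratch: it quotes B\'edos's lemma \cite{Bedos} that $\lambda_d$ is weakly contained in $\lambda_0$, takes a net $\{u_\alpha\}$ of positive definite functions associated to $\lambda_0$ converging pointwise to a given $u \in P(G_d)\cap A(G_d)$, computes $\langle u, \lambda_d(f^**f)\rangle = \lim_\alpha \langle u_\alpha, \lambda_0(f^**f)\rangle \le \|\lambda(f)\|^2$, and finishes using the fact that the reduced norm of the positive element $\lambda_d(f^**f)$ is the supremum of such pairings. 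You instead reprove the needed instance of weak containment directly: the spectral radius formula $\|\lambda_d(a)\| = \lim_k \tau(\lambda_d(a)^k)^{1/k} = \lim_k \big(a^{(*k)}(e)\big)^{1/k}$ for the faithful normal trace $\tau$ on $VN(G_d)$, combined with the estimate $\|\lambda_0(a)\|^k \ge \langle \lambda_0(a)^k \xi_{\mathcal V} | \xi_{\mathcal V}\rangle \to a^{(*k)}(e)$ as $\mathcal V$ shrinks, which is valid because $a^{(*k)}$ is finitely supported and for each fixed $t \ne e$ one has $t\mathcal V \cap \mathcal V = \emptyset$ once $\mathcal V$ is small enough. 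Both routes are sound. The paper's is shorter because it outsources the analytic content to the literature and handles all of $\ell^1(G)$ in one stroke; yours is self-contained, reuses the paper's own vectors $\xi_{\mathcal V}$ from the construction preceding Lemma \ref{ONS}, and in effect supplies an elementary proof of exactly the special case of B\'edos's result that the lemma requires. You are also slightly more careful than the paper on the final step: the paper infers surjectivity from ``dense range plus continuity,'' which for general Banach-space maps is not enough, whereas your explicit appeal to the closedness of the range of a $\ast$-homomorphism between $C^*$-algebras is the correct justification.
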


\begin{proof}
Clearly $\pi$ is a $\ast$-homomorphism with dense range, therefore it suffices to show that $\pi$ is continuous from $C^*_{\delta}(G)$ to $C^*_{\lambda_d}(G_d)$.
 
Recall that for any locally compact group $G$, the left regular representation $\lambda_d$ is always weakly contained in $\lambda_0$ (\cite{Bedos}, Lemma 2).  Equivalently, this means that for any $u \in P(G_d)\cap A(G_d)$, there exists a net $\{u_\alpha\}$ of positive definite functions associated to $\lambda_0$ such that $u_\alpha \to u$ uniformly on compacta.  Fix $u \in P(G_d)\cap A(G_d)$ and $f \in \ell^1(G)$, and let $\{u_\alpha\}$ be such a net converging uniformly on compacta to $u$.  Then we have
\begin{eqnarray*}
\langle u , \lambda_d(f^**f) \rangle &=& \sum_{g \in G} (f^**f)(g)u(g) \\
&=& \lim_\alpha \sum_{g \in G} (f^**f)(g)u_\alpha(g) \\
&=& \lim_\alpha \langle u_\alpha , \lambda_0(f^**f) \rangle \\
&\le& \|\lambda_0(f^**f)\| = \|\lambda_0(f)\|^2 = \|\lambda(f)\|^2. 
\end{eqnarray*}
But this implies that
\begin{eqnarray*}
\|\pi(\lambda(f))\|^2 &=&\|\lambda_d(f)\|^2 \\
&=& \sup \{ \langle u, \lambda_d(f^**f) \rangle : u \in P(G_d)\cap A(G_d)\} \\
&\le& \|\lambda(f)\|^2.
\end{eqnarray*}
I.e. $\pi$ is continuous, and we are done. 
\end{proof} 

\begin{proposition}
If $E$ is a Leinert set in $H$ such that $1_E \in M_{cb}A(H)$ and $P:VN(G) \to I_G(E)^\perp$ is the invariant projection constructed in Proposition \ref{main_result}, then $P\big|_{C^*_\delta(G)}:C^*_\delta(G) \to I_G(E)^\perp$ is completely bounded.
\end{proposition}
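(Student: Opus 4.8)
The plan is to show that the restriction $P\big|_{C^*_\delta(G)}$ factors as a composition of three completely bounded maps, the middle one being the $cb$-multiplier $1_E$ acting on the genuinely discrete group $G_d$, where (unlike on $G$) it is an honest element of $M_{cb}A(G_d)$.

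First I would pin down the action of $P$ on the generators of $C^*_\delta(G)$. Using the explicit formula for $P$ recorded in the Remark following Proposition \ref{main_result_invariant}, namely $PT = \sum_{h \in E}\langle \lambda(h^{-1})T, m\rangle \lambda(h)$, together with the fact that $m = w^*\text{-}\lim_\alpha u_\alpha$ with each $u_\alpha \in P(G)\cap C_c(G)$ normalized so that $u_\alpha(e)=1$ and supported in a neighbourhood of $e$ shrinking to $\{e\}$, one computes $\langle \lambda(s), m\rangle = \lim_\alpha u_\alpha(s) = \delta_{s,e}$ for every $s \in G$. Substituting $T = \lambda(g)$ gives $\langle \lambda(h^{-1}g),m\rangle = \delta_{h^{-1}g,e}$, whence
\[
P(\lambda(g)) = 1_E(g)\,\lambda(g) \qquad (g \in G).
\]
Since the linear span of $\{\lambda(g): g \in G\}$ is norm-dense in $C^*_\delta(G)$, it suffices to realize $\lambda(g) \mapsto 1_E(g)\lambda(g)$ as a composition of completely bounded maps and to check the resulting identity on these generators.

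The factorization I would use is $P\big|_{C^*_\delta(G)} = \Psi \circ \Phi \circ \pi$, built from the following three maps:
\begin{itemize}
\item[(i)] the $\ast$-homomorphism $\pi: C^*_\delta(G) \to C^*_{\lambda_d}(G_d)$, $\lambda(g) \mapsto \lambda_d(g)$, from the preceding Lemma; being a $\ast$-homomorphism of $C^*$-algebras, it is completely contractive.
\item[(ii)] the map $\Phi := M_{\dot{1}_E}^{\,*}: VN(G_d) \to VN(G_d)$, $\lambda_d(g) \mapsto 1_E(g)\lambda_d(g)$, where $\dot{1}_E$ denotes the extension of $1_E$ by zero off $H$. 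Here I would first show $\dot{1}_E \in M_{cb}A(G_d)$ with $\|\dot{1}_E\|_{M_{cb}A(G_d)} = \|1_E\|_{M_{cb}A(H)}$: by Theorem \ref{Schur} this reduces to checking that the Schur symbol $(g,h)\mapsto \dot{1}_E(gh^{-1})$ is block-diagonal along the right cosets of $H$ in $G_d$, each diagonal block being a translate of the Schur symbol of $1_E$ on $H$, so that the two Schur-multiplier norms agree. The adjoint $\Phi$ is then completely bounded with $\|\Phi\|_{cb} = \|1_E\|_{M_{cb}A(H)}$.
\item[(iii)] the map $\Psi: \overline{\mathrm{span}}\{\lambda_d(h): h \in E\} \to VN(G)$, $\lambda_d(h) \mapsto \lambda(h)$. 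To see that $\Psi$ is a complete isometry I would identify both its domain and its range $I_G(E)^\perp = \overline{\mathrm{span}}\{\lambda(h): h \in E\}$ with the $E$-supported part of $VN(H)$. On the $G_d$ side, $H \le G_d$ gives a normal $\ast$-embedding $VN(H) \hookrightarrow VN(G_d)$ with $\lambda_H(h) \mapsto \lambda_d(h)$; on the $G$ side, since $H$ is a closed discrete subgroup, the decomposition $L^2(G) \cong \ell^2(H)\otimes L^2(\Omega)$ along a transversal $\Omega$ for $H\backslash G$ yields $\lambda_G(h) \cong \lambda_H(h)\otimes 1$, so that $\{\lambda_G(h): h \in H\}$ generates a copy of $VN(H)$ (this is the von Neumann-algebraic form of the Herz restriction identification $I_G(H)^\perp \cong VN(H)$ already used in Lemma \ref{restriction_transitivity}). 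Restricting these two complete isometries to the $E$-span makes $\Psi$ completely isometric.
\end{itemize}

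Composing the three maps (and noting that $\Phi\circ\pi$ has range in $\overline{\mathrm{span}}\{\lambda_d(h): h \in E\}$, the domain of $\Psi$) sends $\lambda(g) \mapsto \lambda_d(g) \mapsto 1_E(g)\lambda_d(g) \mapsto 1_E(g)\lambda(g) = P(\lambda(g))$, so by density the two completely bounded maps $P\big|_{C^*_\delta(G)}$ and $\Psi\circ\Phi\circ\pi$ agree on all of $C^*_\delta(G)$; hence $P\big|_{C^*_\delta(G)}$ is completely bounded with $\|P\big|_{C^*_\delta(G)}\|_{cb} \le \|1_E\|_{M_{cb}A(H)}$. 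The step I expect to be the main obstacle is (iii): one must verify that the two a priori different operator-space structures on the $E$-span — the one inherited from $VN(G)$ and the one inherited from $VN(G_d)$ — in fact coincide completely isometrically, which is precisely what forces the detour through $G_d$ (note that $1_E$ is not even a multiplier of $A(G)$ for nondiscrete $G$, being discontinuous). A secondary technical point is the extension-by-zero statement in (ii), which is where the hypothesis $1_E \in M_{cb}A(H)$ enters.
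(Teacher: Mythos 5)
Your proposal is correct and follows essentially the same route as the paper: the paper likewise factors $P\big|_{C^*_\delta(G)}$ as $\Phi \circ \tilde{P} \circ \pi$, where $\pi$ is the $\ast$-homomorphism onto $C^*_{\lambda_d}(G_d)$, $\tilde{P}$ is the completely bounded projection $\lambda_d(t) \mapsto 1_E(t)\lambda_d(t)$ coming from $1_E \in M_{cb}A(H) \subseteq M_{cb}A(G_d)$, and $\Phi$ is the $\ast$-isomorphism $VN_H(G_d) \to VN_H(G)$, $\lambda_d(t)\mapsto \lambda(t)$, which is exactly your $\Psi$. The only cosmetic differences are that the paper obtains $P\lambda(t)=1_E(t)\lambda(t)$ directly from invariance of $P$ rather than from the mean formula, and it asserts (rather than proves via Schur symbols and the quasi-regular decomposition) the two identifications you spell out in steps (ii) and (iii).
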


\begin{proof}
First observe that since $P$ is invariant, we have 
\begin{eqnarray} \label{formula}
P\lambda(t) = \left\{ \begin{array}{ll}
\lambda(t), & \textrm{if $t \in E$}\\
0, & \textrm{if $t \notin E.$}
\end{array} \right.
\end{eqnarray}
Indeed, if $t \in E$, then $\lambda(t) \in I(E)^\perp$, so $P\lambda(t) = \lambda(t)$.  If $t \notin E$, then there exists some $u \in I_G(E)$ with $u(t)= 1$, and so for any $v \in A(G)$ we have $$\langle v, P(\lambda(t)) \rangle = \langle v, P(u \cdot \lambda(t)) \rangle = \langle v, u \cdot P(\lambda(t)) \rangle = \langle uv, P(\lambda(t)) \rangle = 0.$$  That is, $P\lambda(t) = 0$.

In addition, define a map $\tilde{P}:VN(G_d) \to I_{G_d}(E)^\perp$ by setting
\begin{eqnarray} \label{formula_2}
\tilde{P}\lambda_d(t) = \left\{ \begin{array}{ll}
\lambda_d(t), & \textrm{if $t \in E$}\\
0, & \textrm{if $t \notin E.$}
\end{array} \right.
\end{eqnarray} 
Since $1_E \in M_{cb}A(H) \subseteq M_{cb}A(G_d)$, $\tilde{P}$ is a well defined and completely bounded projection.

Now denote by $VN_H(G_d)$ the w$^\ast$-closure of $\lambda_d(\ell^1(H))$ in $VN(G_d)$ and denote by $VN_H(G)$ the w$^\ast$-closure of $\lambda(\ell^1(H))$ in $VN(G)$.  Observe that both $VN_H(G_d)$ and $VN_H(G)$ are von Neumann subalgebras of $VN(G_d)$ and $VN(G)$, respectively, and since $H$ is discrete, the map $\Phi:VN_H(G_d) \to VN_H(G)$ defined by  
\begin{eqnarray} \label{formula_3}
\Phi(\lambda_d(t)) = \lambda(t) && (t \in H),
\end{eqnarray}   
is a $\ast$-isomorphism.

Let $\pi:C^*_\delta(G) \to C^*_{\lambda_d}(G_d)$ be the canonical $\ast$-homomorphism defined at the beginning of this section.  Then using formulas (\ref{formula}), (\ref{formula_2}) and (\ref{formula_3}) it is easy to see that
\begin{eqnarray*}
P\lambda(t) = \Phi(\tilde{P}(\pi(\lambda(t)))) && (t \in G).
\end{eqnarray*}  
Extending by linearity and continuity, it follows that $$P \big|_{C^*_\delta(G)} = \Phi \circ \tilde{P} \circ \pi.$$
Since $\Phi$, $\tilde{P}$, and $\pi$ are all completely bounded, $P\big|_{C^*_\delta(G)}$ must be completely bounded.
\end{proof}

\noindent \textbf{Remark:}
The above result is somewhat unsatisfactory, since in most cases of interest, the projection $P:VN(G) \to I_G(E)^\perp$  is not normal, and we therefore cannot infer complete boundedness of $P$ from that of $P|_{C^*_\delta(G)}$ without further hypotheses.  One such additional hypothesis is presented in the following proposition.



\begin{proposition} \label{prop-cb-proj} Let $E$ be a strong Leinert set contained in a discrete subgroup $H$ of a locally compact group $G$. Let $u \in P(G) \cap C_c(G)$ and $\Gamma_u: A_G(E) \to A(G)$ be fixed as in Theorem \ref{embedding_l2} for the set $E$.  If, in addition, $\delta_h*u*\delta_h = u$ for all $h \in E$, then $I_G(E)$ is completely complemented in $A(G)$.  (In particular, if $G$ is a $[SIN]$-group, then $I_G(E)$ is always completely complemented in $A(G)$.
\end{proposition}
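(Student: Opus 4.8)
The plan is to upgrade the Banach-space complementation argument of Theorem \ref{main_result} to the operator space level by showing that the extension map $\Gamma_u : A_G(E) \to A(G)$ of Theorem \ref{embedding_l2} is not merely bounded but \emph{completely} bounded. Granting this, the projection $Pu = u - \Gamma_u(u\big|_E)$ is completely bounded, since the restriction map $A(G) \to A(G)/I_G(E) = A_G(E)$ is a complete quotient map and hence completely contractive; this produces the required completely bounded projection of $A(G)$ onto $I_G(E)$, exactly as in the proof of Proposition \ref{extension_map}. So everything reduces to estimating $\Gamma_u$ at all matrix levels.

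First I would pin down the operator space structure of $A_G(E)$. By Lemma \ref{restriction_transitivity} we have $A_G(E) \cong A_H(E)$ completely isometrically, and $A_H(E)^* = I_H(E)^\perp$, which, because $E$ is a Leinert set, is the weak$^\ast$-closed span of $\{\lambda(h) : h \in E\}$ in $VN(H)$ and is linearly $\ell^2(E)$. The strong Leinert hypothesis, read through Proposition \ref{Pisier_prop} together with the reverse estimate of Corollary \ref{embedding_VN_l2_discrete}, says precisely that the identity map is a complete isomorphism between $I_H(E)^\perp$ and $\ell^2(E)_c \cap \ell^2(E)_r$: completely contractive in the direction $I_H(E)^\perp \to \ell^2(E)_c \cap \ell^2(E)_r$ and completely bounded with norm at most the Leinert constant $C$ in the other. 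Passing to adjoints via the predual duality $A_H(E)^* = I_H(E)^\perp$, and using the completely isometric identities $(\mathcal H_c)^* = \overline{\mathcal H_r}$, $(\mathcal H_r)^* = \overline{\mathcal H_c}$ together with $(X \cap Y)^* = X^* + Y^*$, I conclude that the natural map (the identity on function values) $A_G(E) \to \overline{\ell^2(E)_r} + \overline{\ell^2(E)_c}$ is completely bounded with cb-norm at most $C$.

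Next I would show that $\Gamma_u$ is completely contractive out of the operator space sum $\overline{\ell^2(E)_r} + \overline{\ell^2(E)_c}$. Since a linear map out of an operator space sum is completely bounded with cb-norm equal to the maximum of the cb-norms of its restrictions to the two summands, it suffices to control each summand. On $\overline{\ell^2(E)_r}$, complete contractivity is exactly Theorem \ref{embedding_l2}(iii). On $\overline{\ell^2(E)_c}$ I would use the invariance hypothesis $\delta_h * u * \delta_h = u$, which yields $\delta_h * u = u * \delta_{h^{-1}}$, so that $\Gamma_u$ agrees termwise with the map $\check{\Gamma}_u$ of Theorem \ref{embedding_l2_column}; the column ONS hypothesis needed there holds automatically because $H$ is discrete, whence $E^{-1}$ is uniformly discrete and $\{\lambda(h^{-1})\xi\}_{h \in E}$ is orthonormal. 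Because a conjugate operator space carries the same matrix norms as the original, the complete contractivity of $\check{\Gamma}_u : \ell^2(E)_c \to A(G)$ from Theorem \ref{embedding_l2_column}(iii) transfers verbatim to $\Gamma_u : \overline{\ell^2(E)_c} \to A(G)$. Composing with the previous paragraph, $\Gamma_u : A_G(E) \to \overline{\ell^2(E)_r} + \overline{\ell^2(E)_c} \to A(G)$ is completely bounded with cb-norm at most $C$, finishing the proof.

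For the $[SIN]$ parenthetical I would observe that a $[SIN]$-group (and, more generally, a group in $[SIN]_H$) admits a neighbourhood basis at the identity consisting of symmetric, conjugation-invariant sets; choosing the base neighbourhood $\mathcal V$ used to build $u = \overline{\xi} * \check{\xi}$ to be such a set, contained in a neighbourhood meeting $H$ only in $e$, forces $u(h^{-1}xh) = u(x)$, i.e.\ $\delta_h * u * \delta_h = u$ for all $h \in H \supseteq E$, so the main hypothesis holds automatically. I expect the main obstacle to be the operator-space duality step of the second paragraph: correctly converting the strong Leinert inequality of Proposition \ref{Pisier_prop} into a completely bounded identification $A_G(E) \cong_{cb} \overline{\ell^2(E)_r} + \overline{\ell^2(E)_c}$ while keeping careful track of the row/column conjugations under duality. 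The remaining point—that the \emph{single} extension map $\Gamma_u$ is simultaneously completely contractive on the row and column summands—is exactly where the invariance hypothesis does its essential work, reconciling the row estimate of Theorem \ref{embedding_l2}(iii) with the column estimate of Theorem \ref{embedding_l2_column}(iii).
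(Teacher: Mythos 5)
Your proposal is correct, and at its core it is the paper's own argument run on the predual side of the duality rather than a genuinely different proof. The paper establishes the same complete boundedness by passing immediately to the adjoint projection $P_u = \Gamma_u^*$ on $VN(G)$, writing $P_u^{(n)}[T_{ij}] = \sum_{h\in E}[T_{ij}\check{u}(h)]\otimes\lambda(h)$, and then combining Proposition \ref{Pisier_prop} (the strong Leinert inequality) with Corollary \ref{embedding_VN_l2} --- whose two halves are precisely the dual forms of your row estimate (Theorem \ref{embedding_l2}(iii)) and your column estimate (Theorem \ref{embedding_l2_column}(iii), enabled by $\delta_h*u*\delta_h = u$). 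So the decomposition into row and column parts, the key lemma, and the role of the invariance hypothesis are identical in both proofs; what differs is the bookkeeping. Your formulation buys a cleaner conceptual statement --- the formal identity $A_G(E) \to \overline{\ell^2(E)_r} + \overline{\ell^2(E)_c}$ is completely bounded and $\Gamma_u$ factors through the sum --- at the cost of the extra operator-space machinery you yourself flag: the duality $(\ell^2(E)_c \cap \ell^2(E)_r)^* = \overline{\ell^2(E)_r}+\overline{\ell^2(E)_c}$, the weak$^\ast$-continuity of $a \mapsto \sum_{h}a(h)\lambda(h)$ needed for a pre-adjoint to exist, and the fact that a well-defined map on an operator-space sum has cb-norm the maximum of the cb-norms of its restrictions. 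The paper's direct matrix-norm estimate on $VN(G)$ sidesteps all of this. One small repair to your column step: orthonormality of $\{\lambda(h^{-1})\xi\}_{h \in E}$ does not follow merely from uniform discreteness of $E^{-1}$, since $\xi$ was built from a neighbourhood adapted to left translates of $E$, and in a nonabelian group the condition $h_2^{-1}h_1 \notin \mathcal{V}^2$ does not imply $h_1h_2^{-1} \notin \mathcal{V}^2$. The airtight derivation uses the hypothesis itself: for distinct $h_1, h_2 \in E$, conjugation invariance gives $u(h_2h_1^{-1}) = u(h_1^{-1}h_2h_1^{-1}h_1) = u(h_1^{-1}h_2) = 0$, which is exactly the required orthogonality (alternatively, choose the neighbourhood $\mathcal{U}$ in the construction with $\mathcal{U}\cap H = \{e\}$). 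With that adjustment your argument is complete.
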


\begin{proof}
It suffices to show that the bounded map $\Gamma_u:A_G(E) \to A(G)$ is completely bounded, or equivalently, that the projection $$P_u = \Gamma_u^*: VN(G) \to I_G(E)^\perp,$$ given by
\begin{eqnarray*}
P_uT = \sum_{h \in E} T\check{u}(h)\lambda(h) && (T \in VN(G)), 
\end{eqnarray*} 
is completely bounded.

Since $E \subseteq H$ is a strong Leinert set, Proposition \ref{Pisier_prop} tells us that there is some $C > 0$ such that for any Hilbert space $\mathcal H$ and any finitely supported function $a:E \to \mathcal{B(H)}$ we have
\begin{eqnarray*}
&&\Big\|\sum_{h \in E} a(h) \otimes \lambda(h)\Big\|_{\mathcal
{B(H)} \otimes_{min} VN(G)} \\
&\le&  C \max\Big\{\Big\|\sum_{h \in E}a(h)^*a(h)\Big\|^{1/2},
\Big\|\sum_{h \in E}a(h)a(h)^*\Big\|^{1/2} \Big\}.
\end{eqnarray*}  
Let $n \in \mathbb N$ and $[T_{ij}] \in M_n(VN(G))$ be arbitrary.  Using the above inequality together with Corollary \ref{embedding_VN_l2}, we get 
\begin{eqnarray*}
\|P_u^{(n)}[T_{ij}]\|_{M_n(VN(G))} &=& \Big\|\sum_{h \in E} [T_{ij}\check{u}(h)]\otimes \lambda (h)\Big\|_{M_n \otimes VN(G)} \\
&\le& C \max\Big\{\Big\|\sum_{h \in E}[T_{ij}\check{u}(h)]^*[T_{ij}\check{u}(h)]\Big\|^{1/2},
\Big\|\sum_{h \in E}[T_{ij}\check{u}(h)][T_{ij}\check{u}(h)]^*\Big\|^{1/2} \Big\} \\
&\le& C \|[T_{ij}]\|_{VN(G)}.
\end{eqnarray*}
Thus $P_u$ is completely bounded, and hence $I_G(E)$ is completely complemented.

Finally, if $G$ is a $[SIN]$-group, note that for any neighbourhood $\mathcal U$ of the identity, one can find $u \in P(G) \cap C_c(G)$ such that $\delta_h*u*\delta_h = u$ for all $h \in G$.  Just choose such a $u$ with small enough support and consider the associated extension map $\Gamma_u$. 
\end{proof}

\end{document}